\definecolor{darkgreen}{rgb}{0,0.5,0}
\definecolor{darkred}{rgb}{0.7,0,0}
\theoremstyle{plain}
\newtheorem{lemma}{Lemma}[section]
\newtheorem{thm}[lemma]{Theorem}
\newtheorem{cor}[lemma]{Corollary}
\theoremstyle{definition}
\newtheorem{defn}[lemma]{Definition}
\newtheorem{rmk}[lemma]{Remark}
\numberwithin{equation}{section}
\newcommand{\al}{\alpha}
\newcommand{\be}{\beta}
\newcommand{\de}{\delta}
\newcommand{\om}{\omega}
\newcommand{\Om}{\Omega}
\newcommand{\La}{\Lambda}
\newcommand{\si}{\sigma}
\newcommand{\Si}{\Sigma}
\renewcommand{\th}{\theta}
\newcommand{\ep}{\varepsilon}
\newcommand{\peps}{\partial_{\eps}}
\newcommand{\R}{\ensuremath{{\mathbb R}}}
\newcommand{\N}{\ensuremath{{\mathbb N}}}
\newcommand{\C}{\ensuremath{{\mathbb C}}}
\newcommand{\zlar}{{z_{\la}^{a,\bb}}}
\newcommand{\vlar}{{v_{\la}^{a,\bb}}}
\newcommand{\vlart}{{\tilde v_{\la}^{a,\bb}}}
\newcommand{\ularb}{\tilde \om_{\la}}
\newcommand{\jlar}{{j_{\la}^{a,\bb}}}
\newcommand{\omlrt}{{\tilde \bb_{\la}}}
\newcommand{\omlt}{{\tilde \bb_{\la}}}
\newcommand{\elart}{{e_{\la}^{a,\bb}}}
\newcommand{\ed}{{\rm d}}
\newcommand{\dd}{\,\ed}
\newcommand{\weakto}{\rightharpoonup}
\DeclareMathOperator{\inj}{inj}
\newcommand{\norm}[1]{\Vert#1\Vert} 
\def\osc{\mathop{{\mathrm{osc}}}\limits}
\newcommand{\beq}{\begin{equation}}
\newcommand{\eeq}{\end{equation}}
\newcommand{\beqs}{\begin{equation*}}
\newcommand{\eeqs}{\end{equation*}}
\newcommand{\beqa}{\begin{equation}\begin{aligned}}
\newcommand{\eeqa}{\end{aligned}\end{equation}}
\newcommand{\beqas}{\begin{equation*}\begin{aligned}}
\newcommand{\eeqas}{\end{aligned}\end{equation*}}
\newcommand{\brmk}{\begin{rmk}}
\newcommand{\ermk}{\end{rmk}}
\newcommand{\partref}[1]{\hbox{(\csname @roman\endcsname{\ref{#1}})}}
\newcommand{\half}{\frac{1}{2}}
\newcommand{\thalf}{\tfrac{1}{2}}
\newcommand{\leqs}{\lesssim}
\newcommand{\Pt}{P_{u_t}}
\newcommand{\ddt}{\tfrac{d}{dt}}
\newcommand{\abs}[1]{\vert#1\vert} 
\newcommand{\babs}[1]{\left\vert#1\right\vert}
\newcommand{\eps}{\varepsilon}
\newcommand{\na}{\nabla}
\newcommand{\TTom}{\norm{\tau_{g_{S^2}}(\om)}_{L^2(S^2)}}
\newcommand{\TTn}{\mathcal{T}_n}
\newcommand{\TT}{\mathcal{T}}
\newcommand{\bit}{\begin{itemize}}
\newcommand{\eit}{\end{itemize}}
\newcommand{\ddeps}{\tfrac{d}{d\eps}\vert_{\eps=0}}
\newcommand{\ddepsz}{\tfrac{d}{d\eps}}
\newcommand{\la}{\lambda}
\newcommand{\dist}{\text{dist}}
\newcommand{\ZZ}{\mathcal{Z}}
\newcommand{\VV}{\mathcal{V}}
\newcommand{\HH}{\mathcal{H}}
\newcommand{\Id}{\mathrm{Id}}
\newcommand{\Area}{\text{Area}}
\newcommand{\Loj}{{\L}ojasiewicz }
\newcommand{\Lojns}{{\L}ojasiewicz}
\newcommand{\bl}{\pi}
\newcommand{\UU}{\mathcal{U}}
\newcommand{\DD}{\mathbb{D}}
\newcommand{\innz}{\langle\cdot,\cdot \rangle_z}
\newcommand{\bbs}{{\hat \om}} 
\newcommand{\bbeps}{{\om^{(\eps)}}}
\newcommand{\bb}{\om} 
\newcommand{\Mob}{{\text{M\"ob}(S^2)}}
\newcommand{\HHz}{{\mathcal H_0}}
\newcommand{\etanew}{\eta_1} 
\newcommand{\VVMob}{{\VV_{\text{M\"ob}}}}
\newcommand{\VVzero}{{\VV_{0}}}
\newcommand{\NN}{\mathcal{N}}
\newcommand{\zz}{z}
\newcommand{\etaz}{{\eta_0}}
  \newcommand{\DDr}{{\DD_{\frac{r_0}{2}}}}
  \newcommand{\HHone}{\HH_1^{\si_1}(\bbs)}
\newcommand{\leps}{{\lambda_\eps}}
\newcommand{\jeps}{{j_\eps}}
\newcommand{\zeps}{{z_\eps}}
\newcommand{\omla}{{\tilde \om_{\la}}}
\newcommand{\tauS}{\tau_{g_{S^2}}}
\newcommand{\zone}{{z_{\la}^{a,\om}}}
\newcommand{\zbone}{{\hat z_{\la}^{b,\om}}}
\newcommand{\ztwo}{{z_{\tilde \la}^{\tilde a, \tilde \om}}}
\newcommand{\zbtwo}{{\hat z_{\tilde \la}^{\tilde b, \tilde \om}}}
\newcommand{\Ed}{{E_d}}
\newcommand{\rr}{{r_0}} 
\title[{\sc
{\L}ojasiewicz inequalities near simple bubble trees
}]{{\sc
{\L}ojasiewicz inequalities for almost harmonic maps near simple bubble trees
}}
\author{Melanie Rupflin}
\date{\today}
\begin{document}

\begin{abstract}
We prove \Loj inequalities for the harmonic map energy for maps from surfaces of positive genus into general analytic target manifolds which are close to simple bubble trees and as a consequence obtain new results on the convergence of harmonic map flow and on the energy spectrum of harmonic maps with small energy. 

Our results and techniques are not restricted to particular targets or to integrable settings and we are able to lift general \Lojns-Simon inequalities valid near harmonic maps $\bbs:S^2\to N$ to 
the singular setting whenever the 
bubble $\bbs$ is attached at a point which is not a branch point.
\end{abstract}
\maketitle
\section{Introduction}
Let $(\Si,g)$ be a closed orientable surface and let $(N,g_N)$ be a closed Riemannian manifold of any dimension, which by Nash's embedding theorem can be assumed to be  isometrically embedded $N\hookrightarrow \R^n$ in some Euclidean space. We recall that a map $u:\Si\to N$ is called a harmonic map if it is a critical point of the Dirichlet energy 
\beq
\label{eq:energy}
E(u):=\half \int_{\Si}\abs{du}^2 \ed v_g.
\eeq
Harmonic maps are characterised by $\tau_g(u)=0$, where the tension of $u:\Si\to N\hookrightarrow \R^n$ can be described as
$\tau_g(u)=P_{u}(\Delta_g u)=\Delta_g u+A(u)(\na u,\na u)$, $\Delta_g$ the Laplace-Beltrami operator of maps $u:(\Si,g)\to \R^n$ and $P_p:\R^n\to T_pN$ the orthogonal projection. Here and in the following $A(p)(v,w)=-(dP_p)(v)(w)$, $v,w\in T_pN$, denotes the second fundamental form of $N\hookrightarrow \R^N$ and we write for short 
$A(u)(\na u,\na u)=g^{ij}A(u)(\partial_{x_i} u,\partial_{x_j} u)$.

In the study of harmonic maps from closed surfaces of positive genus one is often confronted with the situation that the lowest possible energy level $E_0$ of homotopically non-trivial maps is not attained in the set of maps from the given surface $\Si$; instead minimising sequences may undergo bubbling and converge to a limiting configuration which is a simple bubble tree consisting of a trivial base map and a single bubble $\bbs$ given by a non-trivial harmonic map $\bbs:S^2\to N$. 
This singular behaviour means that the powerful techniques of \Loj inequalities as developed in the seminal work of Simon \cite{Simon} do not apply, even in the simplest such situation of degree one maps from the torus to $S^2$. As a result, questions such as 
 the discreteness of the energy spectrum near $E_0$ and the asymptotic behaviour of harmonic map flow for maps whose energy tends to $E_0$ are open in the setting of maps from higher genus surfaces.
 
The purpose of this paper is to address these and related questions not only in the special situation of maps to the sphere mentioned above, but more generally for maps into closed analytic manifolds of arbitrary dimension which are close to simple bubble trees for which the underlying bubble is attached at a non-branched point.

To this end, we first recall that the results of  \cite{Struwe1985, DT, QingTian, LinWang} imply that 
for any sequence of maps $u_n:\Si\to N$ with bounded energy and
$\norm{\tau_g(u_n)}_{L^2(\Si)}\to 0$ a subsequence converges
 strongly in $H^{2}_{loc}(\Si\setminus S)$ to a harmonic limit $u_\infty:\Si\to N$
 away from a finite set of points $S$ 
where a finite number of bubbles form and that in this convergence to a bubble tree there is no loss of 
energy and no formation of necks. 
 
If no bubbles form and if $N$ is analytic then we can apply 
 the work of Simon \cite{Simon} which establishes that  there exists a neighbourhood of $u_\infty$, a constant $C$ and an exponent $\gamma_{\infty}\in (1,2]$ so that for maps $u:\Si\to N$  in this neighbourhood the \Loj estimate
\beq
\label{est:Loj-Simon}
\abs{E(u)-E(u_\infty)}\leq C\norm{\tau_g(u)}_{L^2(\Si)}^{\gamma_{\infty}}\eeq
holds true. 
While the method of Simon from \cite{Simon} applies provided the maps are close to $u_\infty$ in $H^2$,  the above inequality is trivially satisfied for maps with bounded energy and large tension, so \eqref{est:Loj-Simon} holds whenever $u$ is $H^1$-close to $u_\infty$. 

However,  this result is not applicable for maps that undergo bubbling and  the only setting in which this problem has been overcome is 
in the major works \cite{Topping1997,Topping-quantisation} of Topping and \cite{Waldron} of Waldron on almost harmonic maps between spheres.
These results are based on 
a delicate analysis of almost harmonic maps which exploits in particular that for maps  between spheres
 the Dirichlet energy has a natural splitting into a holomophic and an antiholomophic part. 
  This allowed Topping \cite{Topping-quantisation}   
 to derive a \Loj estimate with optimal exponent 
\beq
\label{est:Loj-Peter-1}
\abs{E(u)-4k\pi}\leq C\norm{\tauS(u)}_{L^2(S^2)}^2
\eeq 
for maps between spheres which are close to large class of bubble trees and very recently for 
Waldron \cite{Waldron} to obtain \Loj estimates near general bubble trees.
 
Here we do not restrict our attention to a 
particular domain surface or a particular target but instead restrict the limiting configuration to the 
simplest situation where strong convergence fails, 
i.e. where the maps converge to a simple bubble tree consisting of a constant base map and a single bubble. 
In this situation the results of \cite{Struwe1985, DT, QingTian, LinWang} ensure that 
there exists a non-constant harmonic map $\bbs:S^2\to N$, points $a_n\to a$ and bubble scales $\la_n\to \infty$ so that, after passing to a subsequence, 
 $u_n\to \bbs(p^*)$ strongly in $H_{loc}^{2}(\Si\setminus \{a\})$ while on some fixed sized ball $B_r(a)$, working in local isothermal coordinates  $F_a:\Si \supset B_r(a)\to \DD_{\tilde r}\subset \R^2$, we have 
 \beq \label{eq:strong-conv-bubble-tree} u_n\circ F_a^{-1}-\bbs\circ \pi_{\la_n}^{F_a(a_n)}\to 0 \text{ strongly in } H^1(\DD_{\tilde r}) \cap L^\infty(\DD_{\tilde r}).
\eeq
Here $\pi_{\la}^b:=\pi(\la(x-b))$ for
$\pi:\R^2\to S^2\setminus \{p^*\}$ the inverse of the stereographic projection from the north pole $p^*=(0,0,1)^T$.
 
We note that despite this simple structure of the bubble tree the result of Simon \cite{Simon} is not applicable as we cannot view such maps as being
$H^1$-close to a critical point $u_\infty:\Si\to N$ of the energy. The only exception to this is when the domain is a sphere, as in this case we can modify any such sequence by suitable M\"obius transforms to obtain strong convergence to $\bbs$ on all of $S^2$. 
 For the rest of the paper we will thus assume that $\Si$ is a closed orientable surface of genus $\gamma\geq 1$. Since the energy is conformally invariant we can assume that our domain is either a flat unit area torus
or, for higher genus surfaces, that the metric $g$ is hyperbolic, i.e. has (Gauss)-curvature $-1$.

While also in the present work one of the key steps will be to relate the rate at which the tension tends to zero with the rate at which the bubble concentrate,  our method of proof will be very different to the ones in \cite{Topping-quantisation, Waldron}. In particular we will not require any information on the behaviour of general almost harmonic maps beyond the well known results on the bubble tree convergence recalled above.
 Instead our analysis will follow the approach developed in the joint work \cite{MRS} with Malchiodi and Sharp and we will derive our \Lojns-estimate by comparing maps $u:\Si\to N$ which undergo bubbling to maps in a specific finite dimensional set $\ZZ$ of what we call adapted bubbles. These adapted bubbles $\zz:\Si\to N$ provide models for maps converging to a simple bubble tree and are constructed so that the energy and its variations have the right properties on $\ZZ$.
 The key point of the method of proof is that 
a careful analysis of the energy and its variations on $\ZZ$ allows us to  obtain \Lojns-estimates for much more general almost critical points, without ever having to analyse such general almost critical points. We also refer the reader to Theorem 2.2 of \cite{MRS} which establishes \Lojns-estimates near (non-compact) finite dimensional manifolds of adapted critical points in the abstract setting of energies on Hilbert spaces, and to \cite{R-2bubbles} and \cite{R-rigidity} for recent applications of these ideas.

In the analysis of almost critical points of the $H$-surface energy in \cite{MRS} the set of bubbles is explicitly known, indeed consists of rotations of the identity, and the bubbles are non-degenerate critical points, i.e. so that the second variation of the energy is definite in directions orthogonal to the action of M\"obius-transforms. 

The present paper demonstrates that the ideas developed in \cite{MRS} can be applied to far more general settings, where neither of these simplifications is present. 
On the one hand, we shall not require any detailed information about the underlying bubbles $\bbs$. In particular, our proof does not rely on the explicit knowledge of the set of bubbles that for harmonic maps one would only have for special targets such as spheres. All we need to ask of the bubble is that it is attached to the base at a point that is not a branched point, i.e. that $d \bbs(p^*)\neq 0$, $p^*=\pi(\infty)=(0,0,1)$. 

Just as importantly, we shall see that our method does not rely on the non-degeneracy of the underlying critical point that is present in \cite{MRS} and we will be able to prove \Lojns-estimates even if $\bbs:S^2\to N$ is a harmonic map which has non-integrable Jacobi fields. Indeed we are able to 
 lift the  
\Lojns-Simon estimates \cite{Simon}
\beq
\label{est:Loj-S2}
\abs{E(\bbs)-E(\om)}\leq C\norm{\tauS(\om)}_{L^2(S^2)}^{\gamma_1}
\eeq
and 
\beq
\label{est:Loj-S2-dist}
\dist_{L^2}(\om, \{\tilde \om:S^2\to N \text{ harmonic }\})
\leq C \norm{\tauS(\om)}_{L^2(S^2)}^{\gamma_2},
\eeq
from the regular setting of maps $\om:S^2\to N$ which are close to $\hat \om$ to 
 obtain \Lojns-estimates 
with the same 
 exponents $\gamma_1\in (1,2]$ and $\gamma_2\in (0,1]$ in 
the singular setting of maps from $\Si$ which converge to simple bubble trees. To be more precise, we shall prove

\begin{thm}
\label{thm:bubbling-version}
Let $(\Si,g)$ be a closed oriented surface of positive genus and let $(N,g_N)$ be a closed analytic   manifold of any dimension. 
Let  
$(u_n)$ be a sequence of maps  with bounded energy which are almost harmonic in the sense that 
$$\TTn:=\norm{\tau_g(u_n)}_{L^2(\Si,g)}\to 0.$$
Suppose that $u_n$ converges as described above to a bubble tree consisting of a constant base map $u_\infty:\Si\to N$ and a single bubble $\bbs:S^2\to N$ which is so that 
$d \bbs(p^{*})\neq 0$.
Then, for sufficiently large $n$, we can bound the bubble scale $\la_n$ in  \eqref{eq:strong-conv-bubble-tree} by
 \beq
\label{claim:la-bubbling-thm}
\la_n^{-1}\leq  C\, \TTn\abs{\log\TTn}^{\half}
\eeq
and the difference in energy by 
\beq
\label{claim:Loj-2-bubbling-thm}
\abs{E(u_n,\Si)-E(\hat \omega,S^2)}\leq  C\,\TTn^{\, \gamma_1} \abs{\log\TTn}^{\frac{\gamma_1}{2}}
\eeq
for the same exponent 
$\gamma_1\in(1,2]$ for which \eqref{est:Loj-S2} holds near $\bbs:S^2\to N$.

Furthermore we can choose $\la_n\to \infty$, $a_n\to a$ and a sequence of harmonic maps $\om_n:S^2\to N$ which converge smoothly to $\om_\infty$ so that 
\beq
\label{claim:Loj-1-bubbling-thm}
\norm{\na \big( u_n-\omega_n \circ \pi_{\la_n}\circ F_{a_n}\big)}_{L^2(B_{r_1}(a))}+\norm{\na u_n}_{L^2(\Si\setminus B_{r_1}(a))}\leq C\TTn ^{\,\gamma_2} \abs{\log\TTn}^{\frac{\gamma_2}2}
\eeq
and 
\beq
\label{claim:L2-est-thm-1}
\norm{u_n-\omega_n(p^*)}_{L^2(\Si,g)}\leq C \TTn ^{\,\gamma_2}\abs{\log\TTn}^{\frac{\gamma_2}2}+C\TTn \abs{\log\TTn}
\eeq
and so that for every $r>0$ there exists a constant $C$ with
\beq\label{claim:L2-est-thm-2}
\norm{u_n\circ (\pi_{\la_n}\circ F_{a_n})^{-1}-\omega_n}_{L^2(S^2\setminus B_{r}(p^*))}\leq C \TTn ^{\,\gamma_2}\abs{\log\TTn}^{\frac{\gamma_2}2}.
\eeq
Here $\gamma_2\in (0,1]$ is the same exponent for which \eqref{est:Loj-S2-dist} holds,  
$F_{a_n}$ are local isothermal coordinates centred at $a_n$ as introduced in Remark \ref{rmk:Fa} and $r_1>0$ is a fixed radius.
\end{thm}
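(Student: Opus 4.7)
The plan is to adapt the framework of \cite{MRS}: rather than analyse general almost harmonic maps $u_n$ directly, I would compare them to a carefully constructed finite dimensional family $\ZZ$ of \emph{adapted bubbles} $z_\la^{a,\om}:\Si\to N$, parametrised by the bubble scale $\la$, the attachment point $a\in\Si$, and the bubble $\om:S^2\to N$ chosen in a neighbourhood of $\bbs$. A natural such construction is to set $z_\la^{a,\om}=\om\circ\pi_\la\circ F_a$ on a ball around $a$ and smoothly interpolate to the constant $\om(p^*)$ outside via a cutoff. The first step is to verify that $\ZZ$ is a smooth finite dimensional submanifold, to establish a tubular neighbourhood structure for it, and to show that any $u$ that bubbles as in \eqref{eq:strong-conv-bubble-tree} admits a unique decomposition $u=z_\la^{a,\om}+v$ with $v$ orthogonal to $T_z\ZZ$ and small in a suitable weighted norm.

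The heart of the proof is the projection of the tension equation onto $T_z\ZZ$. Pairing $\tau_g(u)$ with the $\om$-variations reduces, up to controllable errors, to $\tauS(\om)$, allowing the \Lojns-Simon inequalities \eqref{est:Loj-S2} and \eqref{est:Loj-S2-dist} to be invoked with exponents $\gamma_1$ and $\gamma_2$. The decisive computation is testing against $\partial_\la z$: an expansion using $d\bbs(p^*)\neq 0$ gives $\abs{\partial_\la E(z_\la^{a,\om})}\sim \la^{-3}$ at leading order, while the tangent vector satisfies $\norm{\partial_\la z}_{L^2(\Si)}\sim \la^{-2}\abs{\log\la}^{\half}$, the logarithm arising from the neck integral $\int_{1/\la}^{r_0}r^{-1}\,dr$. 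The projected equation therefore yields $\la^{-3}\leqs \TTn\cdot \la^{-2}\abs{\log\la}^{\half}$, which together with $\abs{\log\la}\sim \abs{\log\TTn}$ produces exactly \eqref{claim:la-bubbling-thm}.

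With $\la_n^{-1}$ controlled by $\TTn$, the remaining estimates follow by combining the energy expansion $E(u_n,\Si)=E(\om_n,S^2)+O(\la_n^{-2})+O(\norm{v_n}^2)$ with a residual bound of the form $\norm{v_n}\leqs \TTn$ and the $S^2$-estimates for $\om_n$. Estimate \eqref{claim:Loj-2-bubbling-thm} then follows directly from \eqref{est:Loj-S2}; for \eqref{claim:Loj-1-bubbling-thm} and \eqref{claim:L2-est-thm-2} one applies \eqref{est:Loj-S2-dist} to select a genuinely harmonic $\om_n$ and absorbs $v_n$ via gradient estimates. The additive $\TTn\abs{\log\TTn}$ term in \eqref{claim:L2-est-thm-1} reflects the bound $\norm{z_{\la_n}^{a_n,\om_n}-\om_n(p^*)}_{L^2(\Si)}\sim \la_n^{-1}\abs{\log\la_n}^{\half}$, which through \eqref{claim:la-bubbling-thm} inherits a full power of $\TTn\abs{\log\TTn}$.

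The main obstacle is that $\ZZ$ is only an \emph{approximate} critical manifold, so the error terms produced when testing the tension against tangent vectors must be controlled uniformly as $\la\to\infty$, even though the Jacobi operator of $\bbs$ may have non-trivial kernel. Following the strategy of \cite{MRS} is essential here: one does not need coercivity of the second variation transverse to $T_z\ZZ$, only the $S^2$-\Loj estimates for $\om$ together with the explicit scale identity for $\la$, both lifted to $\Si$ with sharp logarithmic losses. The most delicate point is to ensure that the error terms supported on the neck region, where $\na z$ is large and the geometry degenerates, do not spoil either the exponent $\gamma_1$ (respectively $\gamma_2$) inherited from the sphere, nor the $\abs{\log\TTn}^{\half}$ weight in the scale estimate.
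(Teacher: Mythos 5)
The broad strategy you outline -- compare $u_n$ to a finite-dimensional family of adapted bubbles, test the tension equation against tangent directions of $\ZZ$, exploit $\abs{\partial_\la E(z)}\sim\la^{-3}$ versus $\norm{\partial_\la z}_{L^2}\sim\la^{-2}(\log\la)^{1/2}$ to pin down the bubble scale, and then lift the $S^2$-\Lojns-Simon estimates -- is indeed what the paper does. However, your sketch contains two genuine gaps, and a third misstatement worth flagging.

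First, your construction of $\ZZ$ is the naive one: $\om\circ\pi_\la\circ F_a$ on a ball, cut off to the constant $\om(p^*)$ outside. The paper's adapted bubbles are instead modelled on the Green's function: away from $a$ the map $v_\la^{a,\om}$ is $\om(p^*)+\frac{2}{\la}d\om(p^*)(\na_a G(\cdot,a)+\text{const})$ (see \eqref{def:v-away}), with a correction $j_\la^{a,\om}$ built from the regular part of the Green's function glued into the bubble region (see \eqref{def:jla}--\eqref{def:tilde-v}). This is not cosmetic: with the cutoff-to-constant construction the leading $\la^{-3}$ coefficient of $\partial_\la E(z_\la)$ would depend on the arbitrary choice of cutoff and has no reason to be single-signed or even non-zero, whereas the Green's function correction produces the exact leading term $4\pi\abs{d\om(p^*)}^2\mathcal{J}(a)\la^{-3}$ (Lemma \ref{lemma:main-term-scaling}), and $\mathcal{J}(a)<0$ on every higher-genus surface (Remark \ref{rmk:J_neg}). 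This strict sign is what turns your ``projected equation'' into the lower bound $c_1\la^{-2}\le -\la\frac{d}{d\la}E(z)$ of Corollary \ref{cor:Expansion-scaling}, which is what actually drives \eqref{claim:la-bubbling-thm}. Without it, the scale estimate has no teeth.

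Second, your final paragraph asserts that ``one does not need coercivity of the second variation transverse to $T_z\ZZ$.'' This is the opposite of the truth in both \cite{MRS} and here. The uniform definiteness of $d^2E(z)$ on the orthogonal complement $\VV_z$ (Lemma \ref{lemma:definite}) is precisely what produces the initial bound $\norm{w}_z\lesssim\TTn(\log\la)^{1/2}+\la^{-2}(\log\la)^{1/2}+\norm{\tauS(\om)}_{L^2}+\dots$ of Lemma \ref{lemma:step-1}, and this bound on the residual is indispensable: the relation you invoke between $dE(u)(\partial_\la z)$ and $dE(z)(\partial_\la z)$ picks up $d^2E$-terms and error terms of size $\norm{w}_z^2$ (see \eqref{est:step2-first-formula}), which must be absorbed into $\la^{-2}$. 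Bubble-tree convergence alone only gives $\norm{w}_z=o(1)$, which is not enough to close the argument. Your proposed route, therefore, would stall at exactly this point: you have no quantitative control of the residual without the coercivity that you claim to dispense with.

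A smaller issue: the decomposition $u=z+v$ with $v$ exactly orthogonal to $T_z\ZZ$ does not exist here, because the weighted norms $\norm{\cdot}_z$ vary with the base point $z$. The paper instead minimises $z\mapsto\norm{u-z}_z$ over $\ZZ$ and proves only \emph{almost} orthogonality, $\norm{P^{T_z\ZZ}(P_z w)}_z\lesssim\norm{w}_{L^\infty}\norm{w}_z$ (Lemma \ref{lemma:almost orthogonal}). The discrepancy is controllable, but it must be tracked; an argument that assumes exact orthogonality will not line up with the actual estimates.
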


\begin{rmk}
If all Jacobi fields along $\bbs$ are integrable then we can drop the assumption that $N$ is analytic and obtain the above result for  $\gamma_1=2$ and $\gamma_2=1$. However, as observed by Eells and Wood in  \cite{Lemaire-Wood2} even energy minimisers can have non-integrable Jacobifields. Conversely the works of Gulliver and White \cite{Gulliver-White}
and Lemaire and Wood \cite{Lemaire-Wood1} establish that all Jacobi-fields along harmonic spheres are integrable if the target is homotopic to $S^2$ or $\C P^2$.
\end{rmk}

Over the past decades \Lojns-estimates have become a well established tool in the analysis of variational problems in non-singular settings. However to date there are few instances of \Lojns-estimates in settings with singularities or with a change of topology. In addition to \cite{Topping-quantisation} and \cite{MRS} mentioned above, such results were obtained in the major papers of Colding-Minicozzi \cite{Colding-Minicozzi} and Chodosh-Schulze \cite{Chodosh-Schulze} on the uniqueness of blow-ups of Mean Curvature flow and by Glaudo-Figalli \cite{Glaudo-Figalli}  and Deng-Sun-Wei \cite{Deng-Sun-Wei} on critical points of the Sobolev-inequality.
One of the reasons that 
\Lojns-estimates have attracted a lot of interest is their versatility in applications both to variational problems and to the analysis of evolution equations. They can be used in particular to establish convergence of gradient flows as well as to analyse the energy spectrum of critical points. As a consequence of Theorem \ref{thm:bubbling-version} we will hence obtain new results both on the asymptotic behaviour of harmonic map flow 
\beq
\label{eq:HMF}
\partial_t u=-\na^{L^2}E(u)=\tau_g(u), \qquad u(t=0)=u_0\in H^1(\Si,N),
\eeq
as well as on the energy spectrum of harmonic maps from higher genus surfaces into general analytic manifolds.

Simon's results \cite{Simon} imply that the energy spectrum $\{E(u): u:S^2\to N \text{ harmonic}\}$
of harmonic maps from $S^2$ into any analytic manifold $N$
is discrete below the level $2E_{S^2}$,
$$E_{S^2}:=\min\{E(u): u:S^2\to (N,g_N) \text{ harmonic, non-constant}\},$$
since harmonic maps with energy $E(u_n)\to E_\infty< 2E_{S^2}$ can always be pulled-back by suitable M\"obius transforms to ensure that they subconverge strongly.

Conversely, for surfaces of positive genus, \cite{Simon} only implies that the energy spectrum $\{E(u): u:(\Si,g)\to (N,g_N) \text{ harmonic}\}$ is discrete below the energy level $E_{S^2}$. 
Theorem \ref{thm:bubbling-version} now allows us to deduce the following result, which is in particular of interest for maps into three-manifolds, where the 
 results \cite{G-O-R} of Gulliver, Osserman and Royden ensure that area minimising 
surfaces cannot have true branch points. 

\begin{cor} \label{thm:energy-spectrum}
Let $(N,g_N)$ be a closed analytic manifold of any dimension and let  $(\Si,g)$ 
be a closed surface of positive genus. 
Then  the energy spectrum of harmonic maps from $(\Si,g)$ to $N$ below the level 
\beq
\label{def:Estar}
E^*:= \min( 2E_{S^2}, E_{(\Si,g)}+E_{S^2}, E_{S^2}^*)\eeq
 is discrete, where
 \beqa
 E_{(\Si,g)}&:=\inf\{E(u): u:(\Si,g)\to N \text { harmonic, non-constant}\}\\
 E_{S^2}^*&:=\inf\{E(\om): \om:S^2\to N \text{ branched, harmonic, non-constant}\}.
   \eeqa

\end{cor}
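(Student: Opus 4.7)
The plan is to argue by contradiction. If the energy spectrum were not discrete below $E^*$, there would exist a sequence of harmonic maps $u_n:(\Si,g)\to N$ with pairwise distinct energies $E(u_n)$ converging to some value $E_\infty<E^*$. I would first invoke the bubble tree convergence of \cite{Struwe1985, DT, QingTian, LinWang}: along a subsequence, $u_n$ converges smoothly on $\Si\setminus S$ to a harmonic map $u_\infty:\Si\to N$, with finitely many harmonic bubbles $\om^{(a,i)}:S^2\to N$ forming at each point $a$ in the finite singular set $S\subset \Si$, and without loss of energy, so that
$$E_\infty=E(u_\infty)+\sum_{a\in S,\,i}E(\om^{(a,i)}).$$

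The role of the three thresholds in the definition of $E^*$ is precisely to rule out all bubble tree configurations not covered by either Simon's classical result or Theorem \ref{thm:bubbling-version}. Since each non-constant bubble contributes at least $E_{S^2}$, the bound $E_\infty<2E_{S^2}$ allows at most one bubble; the bound $E_\infty<E_{(\Si,g)}+E_{S^2}$ then forces $u_\infty$ to be constant whenever a bubble is present; and $E_\infty<E_{S^2}^*$ forces any such bubble $\bbs$ to be unbranched, in particular $d\bbs(p^*)\neq 0$. This leaves exactly two scenarios to handle.

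In the non-bubbling case, $u_n\to u_\infty$ smoothly on all of $\Si$ with $u_\infty$ harmonic, and since $N$ is analytic Simon's \Lojns--Simon inequality \eqref{est:Loj-Simon} applies in an $H^1$-neighbourhood of $u_\infty$ and yields $\abs{E(u_n)-E(u_\infty)}\leq C\norm{\tau_g(u_n)}_{L^2}^{\gamma_\infty}=0$ for all large $n$, since $\tau_g(u_n)\equiv 0$. Hence $E(u_n)\equiv E_\infty$ eventually, contradicting the distinctness of the $E(u_n)$. In the bubbling case I would instead feed the (exactly) harmonic sequence $u_n$ into Theorem \ref{thm:bubbling-version}: because $\TTn\equiv 0$, the bubble-scale estimate \eqref{claim:la-bubbling-thm} degenerates to $\la_n^{-1}\leq 0$, which is incompatible with $\la_n$ being a finite positive bubble scale.

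The only non-mechanical step I anticipate is the combinatorial energy bookkeeping that pins down which bubble trees are allowed below $E^*$; once that is in place, the rest of the argument simply applies the two \Lojns-inequalities \eqref{est:Loj-Simon} and \eqref{claim:la-bubbling-thm} as black boxes to the identically vanishing tension $\TTn\equiv 0$, and I do not expect any further obstacle.
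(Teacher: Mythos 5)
Your proposal is correct and follows essentially the same strategy as the paper: argue by contradiction, invoke bubble tree convergence, use the three thresholds in $E^*$ to reduce to either smooth subconvergence (handled by Simon) or a simple bubble tree with unbranched bubble (handled by Theorem \ref{thm:bubbling-version}). The one place you deviate is in the final step of the bubbling case: you use the bubble-scale estimate \eqref{claim:la-bubbling-thm}, whereas the paper instead uses the energy estimate \eqref{claim:Loj-2-bubbling-thm} to conclude directly that $E(u_n)=E(\om)$ for all large $n$, contradicting the pairwise distinctness of the energies. Both routes reach a contradiction, but the paper's is slightly more direct given the statement is about the energy spectrum; also note that, strictly speaking, plugging $\TTn=0$ into $\TTn|\log\TTn|^{1/2}$ is an indeterminate $0\cdot\infty$ and the bound should be read via its continuous extension by $0$ (the same remark applies to the energy estimate, so this is not a flaw specific to your choice).
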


To state our results on harmonic map flow, we first recall that the work of Struwe \cite{Struwe1985} establishes the existence of a global weak solution of \eqref{eq:HMF} which has non-increasing energy and which is smooth away from finitely many times at which bubbling occurs. While solutions of this flow always subconverge along a sequence of times $t_j\to \infty$ either to a harmonic map or 
to a bubble tree of harmonic maps,  Topping \cite{Topping1997} showed that one cannot expect that the whole flow converges as $t\to \infty$ for general smooth target manifolds.
Conversely it is conjectured that for analytic targets the flow must indeed converge.
 If no bubbling occurs at infinite time this already follows from the work of Simon \cite{Simon}, while for maps from $S^2$ to $S^2$ the \Loj inequalities of Topping \cite{Topping-quantisation} and Waldron \cite{Waldron} yield convergence results for the flow.

We can now establish convergence of harmonic map flow into any analytic target manifold $(N,g_N)$  provided the initial energy is below the above mentioned energy threshold. We stress that this constraint on the energy does allow for bubbling, though restricts the potential limiting configurations to the simple bubble trees considered in Theorem \ref{thm:bubbling-version}. 

\begin{thm}\label{thm:flow}
Let $(N,g)$ be a closed analytic manifold of any dimension and let $u_0\in H^1(\Si,N)$ be any map with $E(u_0)\leq  E^*$ for $E^*$ defined in 
\eqref{def:Estar}. 
Then the corresponding solution of harmonic map flow \eqref{eq:HMF} either converges smoothly to a harmonic map $u_\infty:\Si\to N$ as $t\to \infty$ as described in \cite{Simon} or it converges to a simple bubble tree in the following sense:

There 
 exists a point $a\in \Si$ and a harmonic sphere $\hat \om:S^2\to N$ so that 
the energy of $u(t)$ converges to $E_\infty=E(\hat \om)$ 
at a rate of
\beq
\label{claim:delta-energy-flow-sphere}
\abs{E(u(t))-E_\infty}\leq C e^{-c_1\sqrt{t}}, \quad c_1=c_1(N,(\Si,g),E_\infty)>0
\eeq 
if $\gamma_1=2$ respectively, if $\gamma_1\in (1,2)$, at a rate of 
\beq
\label{claim:delta-energy-flow-sphere-degenerate}
\abs{E(u(t))-E_\infty}\leq C t^{-\frac{\gamma_1}{2-\gamma_1}}(\log t)^{\frac{\gamma_1}{2-\gamma_1}},
\eeq
while for any $\al<\frac{\gamma_1-1}{\gamma_1}$
the maps converge in $L^2$ at a rate of
\beq \label{est:L^2-est-flow}
\norm{u(t)-\om(p^*)}_{L^2(\Si)}\leq 
C\abs{E(u(t))-E_\infty}^{\al},
\eeq
as well as in $C^k$ on every compact subset $K$ of $\Si\setminus \{p^*\}$ also at a rate of
\beq 
\label{est:C^k-est-flow}
\norm{u(t)-\om(p^*)}_{C^k(K)}
\leq C\abs{E(u(t))-E_\infty}^{\al}.
\eeq
Here $\gamma_1\in (1,2]$ is so that \eqref{est:Loj-S2} is valid with exponent $\gamma_1$ for all harmonic spheres with energy $E_\infty$ and 
the constant $C$ is allowed to depend on the setting, the specific solution and, in case of 
 \eqref{est:L^2-est-flow} and \eqref{est:C^k-est-flow}, additionally on $\al- \frac{\gamma_1-1}{\gamma_1}>0$ and $K$. 
\end{thm}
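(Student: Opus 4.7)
The plan is to run the standard \Loj--gradient-flow scheme, with the singular inequalities of Theorem \ref{thm:bubbling-version} taking the place of Simon's estimate \eqref{est:Loj-Simon}. First, since $E(u(t))$ is non-increasing along \eqref{eq:HMF} and $E(u_0)\leq E^*$, the limit $E_\infty:=\lim_{t\to\infty}E(u(t))$ satisfies $E_\infty\leq E^*$. By the bubble-tree convergence results recalled in the introduction, along any $t_n\to\infty$ a subsequence converges either to a harmonic map $u_\infty$ on $\Sigma$ or to a bubble tree, and the three terms defining $E^*$ in \eqref{def:Estar} exclude respectively two or more bubbles, a bubble attached to a non-trivial harmonic base, and a branched limiting bubble. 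Hence the only possible asymptotic configurations are a harmonic limit (covered directly by \cite{Simon}) or a simple bubble tree with non-branched bubble $\hat\omega$, to which Theorem \ref{thm:bubbling-version} applies.

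Setting $f(t):=E(u(t))-E_\infty\geq 0$, the flow equation yields $f'(t)=-\norm{\tau_g(u(t))}_{L^2}^2$, and on times where $u(t)$ stays close to the bubble tree the estimate \eqref{claim:Loj-2-bubbling-thm} rearranges to
\beqs -f'(t)\geq c\, f(t)^{2/\gamma_1}\abs{\log f(t)}^{-\gamma_1/2}.\eeqs
Integrating this ODE directly produces the exponential rate \eqref{claim:delta-energy-flow-sphere} when $\gamma_1=2$ and the polynomial rate \eqref{claim:delta-energy-flow-sphere-degenerate} when $\gamma_1\in(1,2)$. Because $\gamma_1>1$, the same inequality combined with the trivial identity $\norm{\tau_g(u)}_{L^2}=(-f')/\norm{\tau_g(u)}_{L^2}$ gives the crucial $L^1_tL^2_x$-bound
\beqs \int_{t_0}^{\infty}\norm{\tau_g(u(s))}_{L^2}\,ds\leq C\int_{0}^{f(t_0)}f^{-1/\gamma_1}\abs{\log f}^{\gamma_1/2}\,df\leq C f(t_0)^{(\gamma_1-1)/\gamma_1-\eta}\eeqs
for any small $\eta>0$. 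Since $\partial_t u=\tau_g(u)$, this shows that $u(t)$ is Cauchy in $L^2(\Sigma)$, and identifying the pointwise $L^2$-limit as the constant $\hat\omega(p^*)$ via \eqref{claim:L2-est-thm-1} and the smooth convergence $\omega_n\to\hat\omega$ yields \eqref{est:L^2-est-flow} for any $\al<(\gamma_1-1)/\gamma_1$.

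The step I expect to be most delicate is the continuation argument required to apply Theorem \ref{thm:bubbling-version} on an entire interval of times, rather than only along the subsequence $t_n$ along which bubble-tree convergence was extracted. I would proceed by open--closedness: fix $\delta$ small and let $[t_0,T)$ be the maximal interval on which $u(t)$ stays within $\delta$ of the bubble-tree configuration produced at time $t_0$. On $[t_0,T)$ the $L^1_tL^2_x$-bound above is small (provided $t_0$ is large enough that $f(t_0)\ll 1$), and the quantitative bubble-tree description of Theorem \ref{thm:bubbling-version}, in particular the scale bound \eqref{claim:la-bubbling-thm} together with \eqref{claim:Loj-1-bubbling-thm}--\eqref{claim:L2-est-thm-2}, forces $u(T)$ to remain strictly inside the $\delta$-neighbourhood, contradicting maximality unless $T=\infty$. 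The same bound controls the continuity in $t$ of the parameters $a(t),\la(t),\omega(t)$ furnished by Theorem \ref{thm:bubbling-version}, producing convergent limits $a$ and $\hat\omega$, while $\la(t)\to\infty$ follows from \eqref{claim:la-bubbling-thm}. Finally, for any compact $K\subset\Sigma\setminus\{a\}$, parabolic $\eps$-regularity for harmonic map flow gives uniform $C^{k+1}$-bounds on $K$ for large $t$ (once the local energy near each point of $K$ falls below the $\eps$-regularity threshold), and interpolation with the $L^2$-rate upgrades the $L^2$-convergence to the $C^k$-estimate \eqref{est:C^k-est-flow}.
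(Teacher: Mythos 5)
Your overall architecture is the same as the paper's: use the energy constraint to rule out bad limit configurations, obtain a Łojasiewicz differential inequality, integrate the ODE to get the energy decay rates, derive the $L^1_t L^2_x$-bound on the tension, and upgrade $L^2$-convergence to $C^k$-convergence away from $a$ via $\eps$-regularity and interpolation. However, the key step you flag as delicate — applying the Łojasiewicz inequality on an entire time interval — is handled very differently in the paper, and your version of it has a gap.

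The paper does not run an open--closedness/continuation argument. Instead it observes the following: after normalising the rescaling so that $E(u(t), F_{a(t)}^{-1}(\DD_{\la(t)^{-1}}))=\thalf E_{S^2}$, the bubbles obtained from any sequence $t_n\to\infty$ with $\norm{\tau_g(u(t_n))}_{L^2}\to 0$ live in a fixed compact set $K$ of harmonic spheres. Covering $K$ by finitely many of the $H^1\cap L^\infty$-neighbourhoods from Theorem \ref{thm:main} gives a Łojasiewicz inequality with a single uniform constant valid near every $\hat\omega\in K$. One then argues by contradiction that there exist $\delta_0>0$ and $T$ such that \emph{every} $u(t)$ with $t\geq T$ and $\norm{\tau_g(u(t))}_{L^2}<\delta_0$ lies in one of these neighbourhoods: otherwise one would find $t_n\to\infty$ with vanishing tension not subconverging to a simple bubble tree, contradicting Struwe/Ding--Tian/Qing--Tian plus the energy constraint. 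Finally, for $\norm{\tau_g(u(t))}_{L^2}\geq\delta_0$ the Łojasiewicz bound holds trivially (after enlarging $C$). So the Łojasiewicz inequality holds for \emph{all} $t\geq T$ with no continuation needed. Your continuation argument, by contrast, tries to propagate closeness forward using the $L^1_tL^2_x$-bound, but this only controls the $L^2$-distance of $u(T)$ from $u(t_0)$; the hypotheses of Theorem \ref{thm:bubbling-version}/\ref{thm:main} require $H^1\cap L^\infty$ closeness to an adapted bubble, which does not follow directly. One would have to insert an extra smoothing step (e.g.\ parabolic estimates converting $L^2$-closeness plus small tension into $H^1\cap L^\infty$-closeness after a unit time), and also address why the comparison bubble tree can be held fixed even though the bubble $\omega_n$ produced at different times may wander. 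The paper's finite-cover plus contradiction argument sidesteps both difficulties and is considerably simpler.

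One further computational slip: rearranging \eqref{claim:Loj-2-bubbling-thm} gives $-f'(t)\geq c\,f^{2/\gamma_1}\abs{\log f}^{-1}$, not $\abs{\log f}^{-\gamma_1/2}$ (from $f\lesssim T^{\gamma_1}\abs{\log T}^{\gamma_1/2}$ one gets $T^2\gtrsim f^{2/\gamma_1}\abs{\log T}^{-1}$, and $\abs{\log T}\sim\gamma_1^{-1}\abs{\log f}$). Integrating your incorrect ODE would yield a logarithmic factor $(\log t)^{\gamma_1^2/(2(2-\gamma_1))}$ rather than the paper's $(\log t)^{\gamma_1/(2-\gamma_1)}$, i.e.\ a strictly stronger rate than what actually follows. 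You also leave implicit the step where local energy remains small on fixed balls in $K$ for \emph{all} large times (not just along $t_n$), which the paper obtains by combining the $L^1_tL^2_x$-bound with the energy-evolution lemma of Topping \cite[Lemma 3.3]{Topping-quantisation} before invoking Struwe's $\eps$-regularity.
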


\begin{rmk}
As the set of harmonic spheres with energy $E_\infty<2E_{S^2}$ is compact modulo M\"obius transforms, there always exists an exponent $\gamma_1\in(1,2]$ so that \eqref{est:Loj-S2} holds true 
for any harmonic sphere $\hat \om$ with $E(\hat \om)=E_{\infty}$. 
If all Jacobifields along harmonic maps of energy $E_\infty$ are integrable then we can drop the assumption that $N$ is analytic and  choose $\gamma_1=2$.
\end{rmk}
The first setting in which it was known that harmonic map flow must become singular, be it at finite or infinite time, is for degree $\pm 1$ maps $u_0$ from the torus to the sphere where the results of Eells-Wood 
 \cite{Eells-Wood} exclude the existence of a harmonic map that is homotopic to $u_0$. While $E^*=8\pi$ for $N=S^2$, in this setting we obtain the following improvement of the above result. 
\begin{cor}
\label{cor:torus}
Suppose that $u_0\in H^1(T^2,S^2)$ has degree $\pm1$ and energy $E(u_0)<12\pi$. Then the corresponding solution of harmonic map flow \eqref{eq:HMF} either develops a bubble at a finite time $T$ after which the flow converges exponentially to a constant or the flow converges to a simple bubble tree at a rate of $O(e^{-c\sqrt{t}})$ as described in  Theorem \ref{thm:flow}. 
\end{cor}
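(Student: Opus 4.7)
The plan is to exploit conservation of degree together with the energy threshold $12\pi$ to reduce the corollary to a combination of Theorem \ref{thm:flow} and, in the post-bubble regime, the classical \Lojns-Simon estimate of \cite{Simon}. For $N=S^2$ we rely on the following facts: (i) every non-constant harmonic sphere $S^2\to S^2$ is $\pm$holomorphic of some degree $k\in\Z\setminus\{0\}$ and carries energy $4\pi\abs{k}$, and all its Jacobi fields are integrable by \cite{Gulliver-White, Lemaire-Wood1}, so that Theorem \ref{thm:flow} applies with exponent $\gamma_1=2$; (ii) the topological energy bound $E(v)\geq 4\pi\abs{\deg v}$ for $H^1$-maps $v:T^2\to S^2$; and (iii) by \cite{Eells-Wood} combined with Riemann--Hurwitz, $E_{(T^2,g)}\geq 8\pi$, whence $E^*=8\pi$ in \eqref{def:Estar}.

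\textbf{Case A: finite-time bubble.} Suppose the flow first develops bubbles at some time $T<\infty$. Struwe's finite-time bubbling analysis produces a post-bubble map $u(T)\in H^1$ of degree $d'=\pm 1-\sum_i k_i$ and energy $E'=E(u_0)-4\pi\sum_i\abs{k_i}$, where $k_1,\dots,k_m\in\Z\setminus\{0\}$ are the degrees of the bubbles formed at $T$. Combining $E(u_0)<12\pi$ with (ii), a short case analysis (enumerating the possibilities for $m$ and $(k_i)$ against $E'\geq 4\pi\abs{d'}$) forces $m=1$ and $k_1$ equal to the initial degree $\pm 1$; hence $u(T)$ has degree $0$ and $E'<8\pi$. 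Applying Theorem \ref{thm:flow} to the restarted flow, the $t\to\infty$ behaviour is either smooth convergence to a harmonic map or convergence to a simple bubble tree (constant base plus a single harmonic sphere $\hat\om$). The second alternative is excluded by conservation of degree ($0=\deg\hat\om\neq 0$). Thus the flow converges smoothly to a harmonic map $u_\infty:T^2\to S^2$ of degree $0$ and energy $\leq E'<8\pi\leq E_{(T^2,g)}$, so by (iii) $u_\infty$ must be constant. Exponential convergence $u(t)\to u_\infty$ is then a direct consequence of Simon's \cite{Simon} \Lojns-Simon estimate at this integrable critical point: the second variation acts on $T_{u_\infty}S^2$-valued sections as the Laplacian, whose kernel of constants is manifestly integrable, giving optimal exponent $\gamma=2$ and hence the stated exponential rate.

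\textbf{Case B: no finite-time bubble.} The flow is smooth for all $t\geq 0$ and by \cite{Struwe1985} subconverges along $t_j\to\infty$ either to a harmonic map of degree $\pm 1$ or to a bubble tree. The first option is ruled out by (iii). For the second, writing $d_0$ for the degree of the base and $k_1,\dots,k_m\in\Z\setminus\{0\}$ for those of the bubbles, we have $d_0+\sum_i k_i=\pm 1$ and $E_{\mathrm{base}}+4\pi\sum_i\abs{k_i}=E_\infty<12\pi$. If the base were non-constant then (iii) would give $E_{\mathrm{base}}\geq 8\pi$, leaving $\sum_i\abs{k_i}<1$, which is impossible. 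Hence the base is constant, $d_0=0$, and the remaining constraints $\sum_i k_i=\pm 1$, $\sum_i\abs{k_i}<3$ force $m=1$ and $k_1=\pm 1$. The limit is therefore a simple bubble tree, and Theorem \ref{thm:flow} applies with $\gamma_1=2$, giving the $O(e^{-c\sqrt t})$ decay \eqref{claim:delta-energy-flow-sphere} of the energy deficit and the corresponding convergence of $u(t)$.

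\textbf{Main obstacle.} The one conceptually delicate point is the passage through the finite-time singularity in Case A: one must verify that Theorem \ref{thm:flow} can be applied to the restarted Struwe solution from the $H^1$-configuration $u(T)$ with the correct (reduced) energy bookkeeping, and that the smooth long-time limit detected by Theorem \ref{thm:flow} can be upgraded from ``some degree-$0$ harmonic map of energy $<8\pi$'' to ``a constant'' via (iii). Once these two points are granted, the rest of the proof is a combinatorial exercise in the degree/energy inequalities combined with direct quotations of Theorem \ref{thm:flow} (for Case B and the post-bubble stage of Case A) and of \cite{Simon} (for the final smooth convergence in Case A).
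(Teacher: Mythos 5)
Your overall strategy (degree/energy accounting to constrain the bubble configuration, integrability of Jacobi fields on $S^2\to S^2$ to get $\gamma_1=2$, Simon in the post--finite-time-bubble regime) is the same as the paper's. However, your auxiliary claim (iii), that $E_{(T^2,g)}\geq 8\pi$ and hence $E^*=8\pi$, is false, and it is used essentially in both of your cases.

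The derivation you sketch from Eells--Wood and Riemann--Hurwitz only concerns harmonic maps of \emph{non-zero} degree: Eells--Wood shows that a non-constant harmonic $T^2\to S^2$ of non-zero degree has $|\deg|\geq 2$ and is $\pm$holomorphic, hence has energy $\geq 8\pi$, but it says nothing about degree zero. Degree-zero non-constant harmonic maps $T^2\to S^2$ do exist, for instance maps factoring through a closed geodesic in $S^2$. For the unit-area square torus $\R^2/\Z^2$ the map $u(x,y)=(\cos 2\pi x,\sin 2\pi x,0)$ is harmonic of degree zero with $E(u)=2\pi^2<8\pi$, so already $E_{(T^2,g)}\leq 2\pi^2<8\pi$; on thin tori the corresponding energy can be made arbitrarily small. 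So (iii) fails. In Case A you use (iii) to upgrade the smooth limit from ``degree-$0$ harmonic of energy $<8\pi$'' to ``constant'' and to license the application of Theorem \ref{thm:flow} to the restarted flow ($E'<8\pi\leq E^*$); in Case B you use it to rule out a non-constant base. Both steps are unjustified as written. The paper's proof of Corollary \ref{cor:torus} never invokes a lower bound on $E_{(T^2,g)}$; it enumerates the possible bubble configurations using only the topological bound $E\geq 4\pi|\deg|$ and the Eells--Wood non-existence of degree-$\pm 1$ harmonic maps on $T^2$, showing directly that a single bubble of degree equal to $\deg u_0$ must form and that any other configuration forces either the bubble energy or the body's topological energy past $12\pi$, or forces a degree-$\pm 1$ body. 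You should replace (iii) by this combinatorial argument on degrees and energies rather than by an asserted lower bound on $E_{(T^2,g)}$.
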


The paper is organised as follows: \\
In Section \ref{sec:def-bubble-set} we explain the construction of the adapted bubbles $\zz:\Si\to N$ with which we will later compare more general almost harmonic maps $u:\Si\to N$
and state a version of our \Lojns-estimates for maps in a uniform $H^1\cap L^\infty$ neighbourhood of the resulting finite dimensional manifold $\ZZ$, compare Theorem \ref{thm:main}. This theorem will then be proved in the subsequent Sections \ref{sec:key-lemmas} and \ref{sect:proof-main} and will in turn form the basis of the proof of Theorem \ref{thm:bubbling-version} and of all other main results of the paper.

\section{Definition of the adapted bubbles}
\label{sec:def-bubble-set}

Our set of adapted bubbles will be a finite dimensional manifold of maps
\beq
\label{def:Z}
\ZZ:=\{z_{\la}^{a,\om}:\Si\to N, \la\geq \la_1, a\in \Si, \om\in \HH_1^{\si_1}(\bbs)\}
\eeq
obtained by scaling maps $\om:S^2\to N$, which are elements of a suitable finite dimensional manifold $\HH_1^{\si_1}(\bbs)$, with a large factor $\la$ and then gluing them in a specific way to a point $a\in \Si$ 
as we describe in detail in the second part of this section.  

A crucial point in the construction of this manifold $\ZZ$ is to ensure that the second variation of the energy is uniformly definite orthogonal to $\ZZ$. Therefore the choice of the set of the underlying maps $\HH_1^{\si_1}(\bbs)$ from $S^2$ to $N$, which we use to define the elements of $\ZZ$, will crucially depend on the properties of the second variation of the energy at the limiting harmonic sphere $\bbs:S^2\to N$.

We recall that  $w\in \Gamma(\hat \om^*TN)$ is called a Jacobi-field along $\bbs$ if 
$d^2E(\bbs)(w,v)=0 \text{ for all } v\in \Gamma(\bbs^*TN), $
or equivalently if $w$ is a solution of 
\beq
\label{def:Jacobi-2}
L_\bbs(w):= P_\bbs(\ddeps \tau(\pi_N(\bbs+\eps w))=0.
\eeq
As the tension transforms according to 
$
\tau(\bb \circ q)=\half \abs{\na q}^2 \tau(\bb)  \circ q
$
under conformal changes $q$, we know that any variation $M^{(\eps)}$ of $\Id:S^2\to S^2$  in the set of M\"obius transforms $\Mob$ induces a Jacobifield $w=\ddeps ( \bbs \circ M^{(\eps)})$ along $\bbs$.

If the
second variation of the energy is non-degenerate at $\bbs$ in the sense that all Jacobi fields are of this form, as was the case in \cite{MRS}, then
we set
$$\HH_1^{\si_1} (\bbs):=\{ \bbs\circ R: R \in SO(3), \abs{p^*-Rp^*}\leq \si_1 \}$$
for a sufficiently small number $\si_1>0$.

If all Jacobifields along $\bbs$ are integrable, 
but not necessarily induced by M\"obius-transforms, 
then we use that the set 
 of harmonic maps near $\bbs$
is a manifold $\HH(\bbs)$ with $T_{\bbs}\HH(\bbs)=
\ker(L_{\bbs})$
 on which the energy is constant, compare \cite{Simon}. 
In this situation we can split 
\beq\label{eq:split-ker}
\ker(L_{\bbs})=\VV_0(\bbs)\oplus \VVMob(\bbs), \quad 
\VVMob(\bbs):= T_{\bbs} \{\bbs \circ M: M\in \Mob\}\eeq
$L^2$-orthogonally, fix a parametrisation $\Psi_1: \ker(L_{\bbs}) \supset \UU\to \HH(\bbs)$ with $\Psi_1(0)=\bbs$ and $d\Psi_1(0)=\Id$ 
and consider the submanifold $\HHz(\bbs)= \Psi_1(\UU\cap \VV_0(\bbs))$ which, for $\UU$ small, is transversal to the action of M\"obiustransforms. 
 For suitably small $\si_1>0$ we then let
$\HH_0^{\si_1}(\bbs):= \{\Psi_1(w): w\in \VV_0(\bbs) \text{ with } \norm{w}_{L^2}\leq \si_1\}$
 and set  
\beq
\label{def:H1} 
\HH_1^{\si_1}(\bbs):=\{ \bb \circ R: R\in SO(3), \bb\in \HH_0^{\si_1}(\bbs) \text{ with }
\abs{p^*-Rp^*}\leq \si_1
\}.\eeq
If we are instead dealing with a non-integrable setting, then we also need to
consider maps that are obtained by adapting certain non-harmonic maps $\om:S^2\to S^2$ in order to obtain a  set of adapted bubbles  which is large enough to capture all non-definite directions of the second variation. 
In this situation we choose  $\HHz(\bbs)$ as a suitable submanifold of the manifold used in the paper \cite{Simon} of Simon and define 
$\HH_0^{\si_1}(\bbs)$ and  
$\HH_1^{\si_1}(\bbs)$ as described above. 
 We discuss the precise definition of $\HHz(\bbs)$ in this case in
Appendix \ref{sec:Simon} and for now simply record that it has the following properties:

\begin{lemma}\label{lemma:Simon}
Let $\bbs:S^2\to N$ be a harmonic map into an analytic target $N$, let $k\in\N$, $\beta>0$ and let 
$\HH_0(\bbs)$ be the submanifold of $C^{k+2,\beta}(S^2,N)$
defined in Appendix \ref{sec:Simon}. Then 
\beq
\label{claim:split-deg}
\ker(L_{\bbs})=T_\bbs \{\bb\circ M: \bb \in \HHz(\bbs), M\in \Mob \}\eeq 
and there
exists a constant $C$ so that for any $\om\in \HHz(\bbs)$
\beq \label{claim:Ck-equiv}
\norm{\tau_{g_{S^2}}(\om)}_{C^k(S^2)}\leq C\norm{\tau_{g_{S^2}}(\om)}_{L^2(S^2)}
\eeq
 and so that we can choose $\bbeps$ in $\HHz(\bbs)$ with 
$\om^{(\eps=0)}=\om$,  $\norm{\peps \bbeps}_{C^k(S^2)}\leq C$ and
\beq \label{claim:tang-part-tension-below}
\tfrac{d}{d\eps} E(\bbeps)\geq \norm{\tau_{g_{S^2}}(\om)}_{L^2(S^2)}.
\eeq
\end{lemma}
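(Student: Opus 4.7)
The plan is to build $\HHz(\bbs)$ by a Lyapunov--Schmidt reduction adapted from Simon \cite{Simon}, and then to verify the three claims in turn. The Jacobi operator $L_{\bbs}$ is elliptic, self-adjoint and Fredholm on $\bbs^{*}TN$, so by the Fredholm alternative and elliptic regularity it restricts to an isomorphism from $\ker(L_{\bbs})^{\perp}\cap C^{k+2,\beta}$ onto $\ker(L_{\bbs})^{\perp}\cap C^{k,\beta}$. For each $w$ in a small ball of $\VVzero(\bbs)$, the implicit function theorem then produces a unique small $\xi=\xi(w)\in \ker(L_{\bbs})^{\perp}\cap C^{k+2,\beta}$ solving
\[(\Id-P)\,\tauS\bigl(P_{N}(\bbs+w+\xi)\bigr)=0,\]
where $P$ denotes the $L^{2}$-orthogonal projection onto $\ker(L_{\bbs})$ and $P_{N}$ the nearest-point retraction onto $N$. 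Since $w\mapsto \xi(w)$ is smooth with $\xi(0)=0$ and $d\xi(0)=0$, the set
\[\HHz(\bbs):=\{P_{N}(\bbs+w+\xi(w)) : w\in \VVzero(\bbs),\ \|w\|_{L^{2}}<\delta\}\]
is a $C^{k+2,\beta}$ submanifold of dimension $\dim \VVzero(\bbs)$ with $T_{\bbs}\HHz(\bbs)=\VVzero(\bbs)$.

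With this definition the first two claims are almost immediate. For \eqref{claim:split-deg}, the tangent space at $\bbs$ of $\{\bb\circ M:\bb\in \HHz(\bbs),\ M\in \Mob\}$ is the sum of $T_{\bbs}\HHz(\bbs)=\VVzero(\bbs)$ (from varying $\bb$) and $T_{\bbs}\{\bbs\circ M : M\in\Mob\}=\VVMob(\bbs)$ (from varying $M$), and the $L^{2}$-orthogonal splitting \eqref{eq:split-ker} identifies this sum with $\ker(L_{\bbs})$. For \eqref{claim:Ck-equiv}, by the very design of the Lyapunov--Schmidt reduction, $\tauS(\om)\in \ker(L_{\bbs})$ for every $\om\in \HHz(\bbs)$, and since $L_{\bbs}$ has smooth coefficients $\ker(L_{\bbs})$ is a finite-dimensional subspace of $C^{\infty}(S^{2},\R^{n})$, on which all Banach-space norms are equivalent.

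For \eqref{claim:tang-part-tension-below}, given $\om=P_{N}(\bbs+w_{0}+\xi(w_{0}))\in \HHz(\bbs)$, I would take the candidate curve
\[\om^{(\eps)}:=P_{N}\bigl(\bbs+w_{0}+\eps v+\xi(w_{0}+\eps v)\bigr)\in \HHz(\bbs)\]
for a vector $v\in \VVzero(\bbs)$ to be chosen, so that $\om^{(\eps=0)}=\om$ and $\peps\om^{(\eps)}$ is controlled in $C^{k+2,\beta}$ by $\|v\|_{L^{2}}$. The first variation formula gives
\[\ddepsz E(\om^{(\eps)})\bigg|_{\eps=0}=-\bigl\langle \tauS(\om),\,\peps\om^{(\eps)}\big|_{\eps=0}\bigr\rangle_{L^{2}(S^{2})},\]
so the task reduces to choosing $v$ with $\|v\|_{L^{2}}$ bounded and this inner product at most $-\|\tauS(\om)\|_{L^{2}}$. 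The key input is that conformal invariance of the Dirichlet energy under $\Mob$ forces $\tauS(\om)\perp d\om(X)$ in $L^{2}$ for every conformal Killing field $X$ on $S^{2}$; combined with $\tauS(\om)\in \VVzero\oplus \VVMob$ and the fact that $d\om(X)$ is $L^{2}$-close to $d\bbs(X)\in \VVMob(\bbs)$, this forces the $\VVMob$-component of $\tauS(\om)$ to be of size $O(\|\om-\bbs\|_{L^{2}})\cdot \|\tauS(\om)\|_{L^{2}}$. Taking $v$ in the direction of $-P_{\VVzero}\tauS(\om)$ with a bounded normalisation then yields the claimed lower bound, the small corrections from $d\xi(w_{0})\neq 0$ being absorbed by choosing $\delta$ small.

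The main obstacle is this last step: the tangent space to $\HHz(\bbs)$ at a general $\om$ coincides with $\VVzero$ only to leading order, whereas $\tauS(\om)\in \VVzero\oplus \VVMob$ a priori has both components. The conformal-invariance argument forces the $\VVMob$-component to be of lower order, but making this quantitative and uniform in $\om\in \HHz(\bbs)$ requires a careful continuity estimate showing it is strictly smaller than $\|\tauS(\om)\|_{L^{2}}$ as $\om\to \bbs$. This is what ultimately dictates how small $\delta$ must be chosen and, via the norm equivalence from \eqref{claim:Ck-equiv}, determines the constant $C$ in $\|\peps\om^{(\eps)}\|_{C^{k}}\leq C$.
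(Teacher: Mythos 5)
Your proposal follows essentially the same route as the paper: a Lyapunov--Schmidt reduction in the spirit of Simon (the paper phrases it as an inverse function theorem for the auxiliary map $\NN(w)=P^{\ker L}(w)+P_{\bbs}(\tauS(\pi_N(\bbs+w)))$, which amounts to the same parametrisation), followed by (i) the transversal splitting of $\ker L_\bbs$ to get \eqref{claim:split-deg}, (ii) finite-dimensionality of $\ker L_\bbs$ and equivalence of norms to get \eqref{claim:Ck-equiv}, and (iii) a variation in the $-P^{\VVzero(\bbs)}$-direction of the tension, normalised appropriately, with conformal invariance killing the $\VVMob$-component, to get \eqref{claim:tang-part-tension-below}.

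One technical point needs fixing. The Lyapunov--Schmidt equation as you wrote it, $(\Id-P)\tauS(\pi_N(\bbs+w+\xi))=0$, does not quite make sense: $\tauS(\pi_N(\bbs+w+\xi))$ is a section of the pullback bundle $(\pi_N(\bbs+w+\xi))^*TN$, so it is not tangent to $N$ along $\bbs$, and its image under $\Id-P$ therefore does not land in the space $\ker(L_\bbs)^\perp\cap\Gamma(\bbs^*TN)$ where the linearisation $\xi\mapsto L_\bbs\xi$ is an isomorphism. One must first precompose with the pointwise projection $P_\bbs$ onto $T_\bbs N$, exactly as the paper does via $P_{\bbs}(\tauS(\cdot))$ in the definition of $\NN$. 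The design consequence is then $P_\bbs\tauS(\om)\in\ker(L_\bbs)$ rather than $\tauS(\om)\in\ker(L_\bbs)$, and the proof of \eqref{claim:Ck-equiv} needs the extra (easy) step $\norm{\tauS(\om)}_{C^k}\leq C\norm{P_\bbs\tauS(\om)}_{C^k}$, which holds because $\om$ is $C^1$-close to $\bbs$ so the normal-to-$T_\bbs N$ part of $\tauS(\om)$ is of lower order. The same care is needed in step (iii), where the projection onto $\VVzero(\bbs)$ should be applied to $P_\bbs\tauS(\om)$. Apart from this, your argument matches the paper's.
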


Here and in the following all derivatives with respect to $\eps$ will be evaluated at $\eps=0$. 

Having thus chosen the set $\HH_1^{\si_1}(\bbs)$ of maps we want to scale and glue to a point $a\in\Si$, we now turn to the precise construction of the maps $z_{\la}^{a,\om}$, for $a\in \Si$, $\om \in \HH_1^{\si_1}(\bbs)$  and sufficiently large $\la$. 
We note that the right definition of these maps $z_{\la}^{a,\om}$ is crucial to ensure that the first and second variation of the energy on $\ZZ$ have the right properties for our method of proof to work, compare also \cite[Theorem 2.2]{MRS}.

Let $ \bl:\R^2\to S^2\setminus \{p^*\}$, $p^*:=(0,0,1)^T$, be the inverse stereographic projection
\beq 
\bl(x)=\big(\frac{2x}{1+\abs{x}^2}, \frac{\abs{x}^2-1}{\abs{x}^2+1}\big) \eeq
and set $\bl_\la(x):=\bl(\la x)$, $\la>0$. 
We want to define our adapted bubbles 
$$\zlar:\Si\to N \quad \text{ for } \om\in \HH_1^{\si_1}(\bbs), \la\geq \la_1, a\in \Si,$$ $\la_1$, $\si_1$ chosen later,
 in a way that $\zlar(p)\approx \omega(\bl_{\la}(x))$ in the following local isothermal coordinates $x=F_a(p)$.

\begin{rmk}\label{rmk:Fa}
Given any $a\in \Si$ we let 
 $F_a:B_\iota(a)\to \DD_{\rr}=\{x\in \R^2: \abs{x}<\rr\}$, $\iota:=\half\inj(\Si,g)$,
be as in Remark 3.1 of \cite{MRS}: 
 If $(\Si,g)$ is a flat unit area torus we set  $\rr=\iota$ and
use  Euclidean translations $F_a$ to the origin on a fundamental domain as  coordinates.
Conversely,  for higher genus surfaces we set 
$\rr= \tanh(\iota/2)$ and choose an orientation preserving isometric isomorphism 
$F_a$ that maps $(B_{\iota}(a),g)$ to the disc $\mathbb{D}_{\rr}$ in the Poincar\'e hyperbolic disc 
$(\mathbb{D}_{1},\tfrac{4}{(1-|x|^2)^2}g_E)$.
\end{rmk}
While in the higher genus case this only determines the maps $F_a$  upto a rotation of the domain, the specific choice of $F_a$ will not affect the definition of the set of adapted bubbles as  rotations of the coordinates correspond to the action of the subgroup of $SO(3)$ which fixes $p^*$. In the few places where a consistent choice of $F_a$ for $a$ in a neighbourhood of some $a_0$ is needed, we can fix a tiling of the Poincar\'e hyperbolic disc and use hyperbolic translations to the origin as explained in \cite[Remark 3.4]{MRS}.

Since 
\beq \label{eq:expansion-pi}
\bl_\la(x)=p^*+(\tfrac{2x}{\la \abs{x}^2},0)^{T}+O(\la^{-2}) \text{ for } \abs{x}\geq c>0\eeq
we can write 
\beq \label{eq:expansion-uR}
\tilde \bb_\la(x):= \bb (\pi_\la(x))=\bb(p^*)+d\bb(p^*) 
(\tfrac{2x}{\la \abs{x}^2},0)^{T}+O(\la^{-2}) \text{ for } \abs{x}\geq c>0
\eeq
  where we note that this expansion is valid for the function $ \tilde \bb_\la$ as well as its derivatives with respect to $x$. 
We shall later on consider variations $z_\eps$ of adapted bubbles obtained by variations of either the bubble parameter $\la_\eps$ or of the underlying map $\om^{(\eps)}\in \HHone$ and will always assume that these variations are chosen so that 
\beq
\label{ass:var}
\abs{\peps \la_\eps}\leq C \la \text{ and } \norm{\peps \om^{(\eps)}}_{C^2(S^2)}\leq C
\eeq 
  as this corresponds to variations of order $1$ after rescaling. 
 We note that for such variations the expansion \eqref{eq:expansion-uR} gives also an expansion for $\peps \tilde \bb_{\la_\eps}^{(\eps)}$ 
and its spatial derivatives with an error term of the same order 
$O(\la^{-2})$.

As in \cite{MRS} we  modify $\tilde \bb_\la$  with the help of the Green's function $G$, characterised by 
\beq 
\label{def:G} 
-\Delta_p G(p,a)=2\pi \de_a-2\pi (\Area(\Sigma,g))^{-1} \text{ on } \Si.\eeq
Letting $G_a$ be the function that represents $G$ in the above coordinates we recall that 
\begin{equation}\label{eq:locG}
G_a(x,y):=G(F_a^{-1}(x), F_a^{-1}(y))= - \log |x-y| + J_a(x,y), \quad x,y\in \DD_{r_0}
\end{equation}
for a smooth harmonic function $J_a$ which represents the regular part of Green's function, see \cite{MRS} for more detail. In particular 
\beq
\label{eq:locG-deriv}
 \na_y G_a(x,0)=\frac{x}{\abs{x}^2}+ \na_y J_a(x,0)
\eeq
and we will use this to adapt the maps   $\tilde \bb_\la$ to give well defined maps $v_{\la}^{a,\om}:\Si\to \R^n$, which we will later project onto $N$ to obtain our adapted bubbles  $\zlar:\Si\to N\hookrightarrow\R^n$. 
To do this, we let $j_{\la}^{a,\bb}:\DD_{r_0}\to \R^n$ be defined by 
\beq \label{def:jla}
j_{\la}^{a,\bb}(x):=\tfrac{2 }{\la}d\bb(p^*)\big( \na_y J_a(x,0)-\na_y J_a(0,0),0\big)^T,
\eeq 
and fix a cut-off function $\phi\in C_c^\infty(\DD_\rr,[0,1])$ with $\phi\equiv 1$ on $\DD_{\frac \rr 2}$, $r_0>0$ as in Remark \ref{rmk:Fa}.  
The maps $v_{\la}^{a,\om}:\Si\to \R^n$ are then defined as 
\beq \label{def:v-away}
v_{\la}^{a,\om}(p) = 
 \omega(p^*)+ 
\tfrac{2 }{\la}d\bb(p^*)
\big(\partial_{a^1}G(p,a)-\partial_{y_1}J_a(0,0) , \partial_{a^2} G(p,a)- \partial_{y_2}J_a(0,0),0\big)^T
\eeq on $\Si\setminus B_{\iota}(a)$	
where 
 $\partial_{a^i} = (F_a^{-1})_{\ast} \partial_{y^i}$, while on 
  $ B_{\iota}(a)$ we set
$v_{\la}^{a,\om}(p) = \tilde v_{\la}^{a,\om}(F_a(p))$ 
for $\tilde v_{\la}^{a,\om}:\DD_\rr\to \R^n$ given by 
\beq 
\label{def:tilde-v}
\tilde v_{\la}^{a,\om}:= \phi\big[ \ularb+ j_{\la}^{a,\bb}\big] + (1-\phi)
\big[ \bb(p^*)+
\tfrac{2 }{\la}d\bb(p^*)\big( \na_y G_a(\cdot,0)-\na_y J_a(0,0),0\big)^T].
\eeq
We note that for $N=S^2\hookrightarrow \R^3$ and $\omega=\text{Id}$ this definition of $\vlar$ essentially agrees with the choice of the adapted bubbles in the $H$-surface case in \cite{MRS} except that here we need to ensure that $\jlar(0)=0$ as our problem does not have the translation invariance present in \cite{MRS}.

On $B_{\iota}(a)$  we can use that the function $\vlar$ is represented in the above coordinates by 
\beqa
\vlart=\ularb+j_{\la}^{a,\bb}+\elart
\label{eq:writing-v-lart}
\eeqa
for an error term $\elart$ that is supported on $\DD_{r_0}\setminus \DD_{\frac{r_0}{2}}$ and there of order 
\beqa 
\label{est:elart} 
\norm{\elart}_{C^2}+ \norm{\partial_\eps \elart}_{C^2}
=O(\la^{-2}) .
\eeqa
We will in particular use that 
since $j_{\la}^{a,\bb}(0)=0$ we have 
\beq \label{est:v-hat-u-ball}
\abs{ \vlart(x)-\ularb(x)}\leq C\la^{-1}\abs{x} +C\la^{-2} \text{ and } \quad \abs{\na (\vlart-\ularb )}\leq C\la^{-1}
\text{ on } 
\DD_{r_0}
\eeq 
and that the analogue estimates also hold true for the derivatives of these quantities with respect to $\eps$.
Away from 
$B_{\tilde \iota}(a):=F_a^{-1}(\DD_{\rr/2})$, we can instead use that
\beq
\label{est:v-away}
\norm{\vlar-\bb(p^*)}_{C^2(\Si\setminus B_{\tilde \iota}(a))}+
 \norm{\partial_\eps (\vlar-\bbeps(p^*))}_{C^2(\Si\setminus B_{\tilde \iota}(a))}
\leq C \la^{-1}.
\eeq 

We now let $\de_N>0$ be so that the nearest point projection $\pi_N$ to $N$ is well-defined and smooth in a $\de_N$-tubular neighbourhood of $N\hookrightarrow \R^n$. 
Then, for sufficiently large
 $\la_1\geq 2$, the above estimates imply that 
 $\dist(\vlar(\cdot),N)\leq C\la_1^{-1}<\de_N$ on $\Si$
 allowing us to project these maps to define our adapted bubbles $\zlar:\Si\to N$ by
\beq 
\label{def:z-bubble} 
\zlar(p):= \pi_N(v_{\la}^{a,\om}), \qquad \la\geq\la_1, a\in \Si, \om\in \HHone.
\eeq

 \begin{rmk} \label{rmk:conventions}
In the following all results are to be understood as being true for
the set of adapted bubbles $\ZZ=\ZZ_{\la_1}^{\si_1}$ for
 sufficiently large $\la_1\geq 2$ and sufficiently small $\si_1>0$, both allowed to depend only on $(\Si,g)$, $N$ and $\bbs$. 
At times we will furthermore need to consider a smaller subset of this set given by  
\beq
\label{def:Z-subset} 
\ZZ_{\bar \la}^{\bar \si}:= \{z_{\la}^{a,\om}: \la\geq \bar \la, a\in \Si \text{ and } \om\in \HH_1^{\bar \si}(\hat \om)\}
\eeq
for suitable $\bar \la\geq \la_1$ and $0<\bar \si\leq \si_1$.
Furthermore we use the convention that $C$ denotes a constant, allowed to change from line to line, which only depends on $\bbs$, $(\Si,g)$ and $N$ unless indicated otherwise and we will use the shorthand $A\leqs B$ to mean that $A\leq CB$ for such a constant $C$. 
 \end{rmk}

In the following it will be important that we do not work with respect to the standard inner product on $H^1(\Si,\R^n)$, but instead use an inner product that appropriately weighs the $L^2$-part of the norm in the bubble region.

\begin{defn}\label{def:inner-product}
Given any $z=\zlar\in\ZZ$ we consider the
inner product
\beq\label{eq:def-inner-product}
\langle v,w\rangle_z :=\int_{\Si} \na v \na w + \rho_{z}^2 v w  \dd v_g,\quad v,w\in H^1(\Si,\R^n)
\eeq
where the weight $\rho_z$ is given by $\rho_z\equiv \frac{\la}{1+\la^2r_0^2}$ on $\Si\setminus B_\iota(a)$ while
\beqs 
\rho_z(p)=\rho_\la(x):=\frac{1}{2\sqrt{2}}\abs{\na \pi_\la(x)} =
\frac{\la}{1+\la^2\abs{x}^2}\text{ for } p=F_a^{-1}(x)\in B_\iota(a).
\eeqs 
\end{defn}
At times we will also want to use local versions  of the above norm so set
\beq
\label{def:norm-local}
\norm{w}_{z,\Om}^2:= \norm{\na w}_{L^2(\Om)}^2+\norm{\rho_z w}_{L^2(\Om)}^2, \quad \text{ for } \Om\subset \Si.
\eeq 
 We note that the weight $\rho_z:\Si\to \R^+$ is continuous and that we can bound  
 \beq 
 \label{est:z-by-weight}
 \abs{\na z}\leq C\rho_z \text{ and } \norm{\na (P_z w)}_{z}\leq C\norm{w}_{z}
 \eeq 
  for any $w\in H^1(\Si, \R^n)$ and any $z\in \ZZ$.
 A short calculation, see Appendix \ref{appendix:technical}, gives
 \beq
 \label{est:MV} \babs{\fint_{\Si}w \dd v_g}\leq C_{(\Si,g)}(\log\la)^\half \norm{ w}_{z}
 \eeq
  and thus allows us to bound
 \beq
\label{est:Lp-by-z-norm}
\norm{w}_{L^p(\Si,g)}\leq C(\log\la)^\half\norm{w}_{z},  \qquad p\in [1,\infty),
\quad C=C(p,(\Si,g)).
\eeq

With these definitions in place we can finally formulate our main result in the form that we shall prove in Sections \ref{sec:key-lemmas} and \ref{sect:proof-main} and that will subsequently form the basis of the proofs of all other main results.  

\begin{thm}\label{thm:main}
Let $(\Si,g)$ be any closed surface of positive genus, let $N$ be any analytic  closed  manifold and let $\bbs:S^2\to N$ be any harmonic map with $d\bbs(p^*)\neq 0$. 
Then there exist numbers $\eps>0$, $\bar\la\geq \la_1$, $\bar \si\in (0,\si_1)$ and $C<\infty$ 
so that for 
every $u\in H^1(\Si,N)$ for which there exists $\tilde z\in\ZZ_{\bar \la}^{\bar \si}$ with
$$\norm{\na(u-\tilde z)}_{L^2(\Si,g)}+\norm{u-\tilde z}_{L^\infty(\Si,g)}<\eps,$$
 $\ZZ_{\bar \la}^{\bar\si}$ 
 as in Remark \ref{rmk:conventions}, 
we can bound
\beq
\label{claim:Loj-1}
\dist(u,\ZZ):=\inf_{z\in\ZZ}\norm{u-z}_{z}\leq C\norm{\tau_g(u)}_{L^2(\Si,g)} (1+\abs{\log\norm{\tau_g(u)}_{L^2(\Si,g)}}^\half)
\eeq
while the estimate 
\beq
\label{claim:Loj-2}
\abs{E(\hat \omega)-E(u)}\leq C \norm{\tau_g(u)}_{L^2(\Si,g)}^{\gamma_1} (1+\abs{\log{\norm{\tau_g(u)}_{L^2(\Si,g)}}}^\half)^{\gamma_1},
\eeq
holds true for the exponent 
$\gamma_1\in (1,2]$ for which \eqref{est:Loj-S2} holds.

Furthermore, for each such $u$ there exists $z\in\ZZ$ with 
 $ \norm{u-z}_{z}=\dist(u,\ZZ)$ 
and the bubble scale of any such $z=z_{\la}^{a,\om}$ satisfies
\beq \label{claim:lambda}
\la^{-1}\leq C\norm{\tau_g(u)}_{L^2(\Si,g)}\cdot (1+ \abs{\log{\norm{\tau_g(u)}_{L^2(\Si,g)}}}^{\half})
\eeq 
while the tension of the underlying map $\om:S^2\to N$ is controlled by
\beq
\label{claim:tension}
\norm{\tau_{g_{S^2}}(\om)}_{C^2(S^2)}\leq C\norm{\tau_g(u)}_{L^2(\Si,g)}\cdot (1+ \abs{\log{\norm{\tau_g(u)}_{L^2(\Si,g)}}}^\half).
\eeq
\end{thm}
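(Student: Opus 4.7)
The plan is to follow the Lyapunov--Schmidt style framework of Theorem 2.2 of \cite{MRS}, adapted from the setting of a compact non-degenerate manifold of critical points to the present non-compact and possibly non-integrable setting. I would first argue that, for $u$ in the given $H^1\cap L^\infty$-neighbourhood, the function $z\mapsto \|u-z\|_z$ attains its infimum on $\ZZ_{\bar\lambda}^{\bar\sigma}$ at some $z=z_\la^{a,\om}$: escape to $\la\to\infty$ is prevented by the closeness to $\tilde z\in\ZZ_{\bar\la}^{\bar\si}$ of finite bubble scale, while the remaining parameters $a\in\Si$ and $\om\in\HH_1^{\bar\si}(\bbs)$ live in essentially compact ranges. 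The Euler--Lagrange condition at the minimiser yields the orthogonality $\langle u-z,v\rangle_z=0$ for every $v\in T_z\ZZ$.

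The core analytic step is a coercivity estimate of the form $d^2 E(z)(w,P_z w)\geq c\,\|w\|_z^2$, modulo higher-order or negligible terms, valid for $w:=u-z$ orthogonal to $T_z\ZZ$ in $\langle\cdot,\cdot\rangle_z$. The definition \eqref{def:H1} together with Lemma \ref{lemma:Simon} is designed so that, after conformal rescaling back to $S^2$, the space $T_z\ZZ$ absorbs the entire kernel $\ker L_{\bbs}=\VVMob(\bbs)\oplus\VVzero(\bbs)$ identified in \eqref{claim:split-deg}, and coercivity on the orthogonal complement descends to the weighted norm via the conformal invariance built into the weight $\rho_z$. Combining this with the linearisation $dE(u)(v)=dE(z)(v)+d^2E(z)(w,v)+O(\|w\|_z^2\|v\|_z)$, tested at $v=P_zw$, and using $dE(u)(v)=\langle\tau_g(u),v\rangle_{L^2}$ together with the logarithmic loss $\|v\|_{L^2}\leq C(\log\la)^{1/2}\|v\|_z$ from \eqref{est:Lp-by-z-norm}, produces \eqref{claim:Loj-1} once the residual $dE(z)(P_zw)$ is shown to be absorbable from the adapted-bubble expansions \eqref{def:tilde-v}, \eqref{eq:writing-v-lart}, \eqref{est:elart}.

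For the parameter bounds \eqref{claim:lambda} and \eqref{claim:tension}, I would test the identity $\langle\tau_g(u),v\rangle_{L^2} = dE(z)(v) + d^2E(z)(w,v) + O(\|w\|_z^2\|v\|_z)$ against each of the three parameter directions in $T_z\ZZ$. Differentiating \eqref{def:v-away}--\eqref{def:tilde-v} in $\la$ and using the Green-function expansion \eqref{eq:locG}--\eqref{eq:locG-deriv} should reveal a leading, non-degenerate term of order $\la^{-1}$ in $\partial_\la E(z_\la^{a,\om})$, coming from the coupling between $d\bbs(p^*)$ and $\na_y J_a(0,0)$; this term is nonzero precisely because $d\bbs(p^*)\neq 0$ and yields \eqref{claim:lambda}. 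An $\om$-variation in the direction supplied by \eqref{claim:tang-part-tension-below} of Lemma \ref{lemma:Simon} gives the lower bound $\partial_\eps E(z_\la^{a,\om^{(\eps)}})\geq \|\tauS(\om)\|_{L^2}-\text{error}$, producing \eqref{claim:tension} in $L^2$, which then boosts to $C^2$ by \eqref{claim:Ck-equiv} since $\om=\bb\circ R$ with $\bb\in\HHz(\bbs)$. Finally \eqref{claim:Loj-2} is obtained by splitting $E(u)-E(\om)=[E(u)-E(z)]+[E(z)-E(\om)]$, estimating the first bracket by Taylor expansion in $w$ and the second by an expansion in $\la^{-1}$ combined with the sphere Łojasiewicz \eqref{est:Loj-S2} applied to $\om$, now controlled by \eqref{claim:tension}; this is what raises the exponent from $2$ to $\gamma_1$ and introduces the corresponding logarithmic factor.

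The main obstacle is the coercivity estimate in the singular regime $\la\to\infty$: tangency to $\ZZ$ on $\Si$ corresponds only approximately to tangency to $\HH_1^{\si_1}(\bbs)$ together with the M\"obius action on $S^2$ after rescaling, because the Green-function correction $\jlar$ and the cut-off error $\elart$ perturb both the tangent space and the second variation. Ensuring that this perturbation is of smaller order than $\|w\|_z^2$ uniformly in $\la$, while simultaneously absorbing the non-integrable directions $\VVzero(\bbs)$ via the analytic structure of $\HHz(\bbs)$ constructed in Appendix \ref{sec:Simon}, is the heart of the argument and the place where the assumption that $N$ is analytic enters crucially.
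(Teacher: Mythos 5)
Your outline has the right architecture — minimise $\|u-\tilde z\|_{\tilde z}$, use (near-)orthogonality of $w=u-z$ to $T_z\ZZ$ together with definiteness of $d^2E(z)$, and then test first- and second-variation identities against the $\la$- and $\om$-directions to extract \eqref{claim:lambda} and \eqref{claim:tension}. But two of the pivotal steps as you have stated them are incorrect, and they are precisely where the paper has to work hardest.

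First, the Euler--Lagrange condition at the minimiser does \emph{not} yield $\langle u-z,v\rangle_z=0$ for $v\in T_z\ZZ$. Because the inner product $\langle\cdot,\cdot\rangle_z$ itself depends on the base point $z$ through the weight $\rho_z$, differentiating $\eps\mapsto\|u-z_\eps\|_{z_\eps}^2$ produces an additional term involving $\partial_\eps\rho_{z_\eps}$. One gets only \emph{approximate} orthogonality, $\|P^{T_z\ZZ}(P_z w)\|_z\lesssim\|w\|_{L^\infty}\|w\|_z$, and making this quantitative requires an estimate on the evolution of the weight, $\|\partial_\eps\rho_{z_\eps}\|_{L^2}\lesssim\|\partial_\eps z_\eps\|_{z_\eps}$, which in turn rests on the specific scaling structure of $\rho_\la=\la/(1+\la^2|x|^2)$. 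Without this correction, the cancellations you invoke when testing against $P_zw$ do not hold.

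Second, the claimed coercivity $d^2E(z)(w,P_zw)\geq c\|w\|_z^2$ for $w\perp T_z\ZZ$ is false in general: $\hat\om$ need not be a local minimiser of the energy on $S^2$, so the Jacobi operator can have a genuine negative spectrum. The correct statement, Lemma \ref{lemma:definite}, is that the orthogonal complement $\VV_z$ splits $\VV_z=\VV_z^+\oplus\VV_z^-$ with $d^2E(z)$ being $\pm$-definite on $\VV_z^\pm$, uniformly in $z$. Consequently the test vector one must use is the sign-adjusted $\tilde w_z=(P^{\VV_z^+}-P^{\VV_z^-})(P_z w)$, not $P_zw$ itself; only then does $d^2E(z)(w_z,\tilde w_z)\geq\tfrac{c_0}{2}\|w\|_z^2$ survive. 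Proving the spectral gap uniformly as $\la\to\infty$ is itself nontrivial and is handled by a contradiction/compactness argument that produces a nontrivial Jacobi field orthogonal to $\ker L_{\hat\om}$. A smaller but consequential slip: the leading term in $\partial_\la E(z)$ is of order $\la^{-3}$ (so that $-\la\,\partial_\la E(z)\sim c\,\la^{-2}$, Corollary \ref{cor:Expansion-scaling}), not $\la^{-1}$; using the wrong power would give the wrong bound on $\la^{-1}$ in terms of $\|\tau_g(u)\|_{L^2}$ and would break the bootstrap between $\la^{-1}$, $\|\tauS(\om)\|$ and $\|w\|_z$ that actually produces the $(\log\|\tau\|)^{1/2}$ factors in \eqref{claim:lambda}--\eqref{claim:tension}.

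Finally, even after \eqref{claim:lambda}--\eqref{claim:tension} are secured, passing from them and $\|w\|_z$ to the energy gap \eqref{claim:Loj-2} needs the intermediate estimate $|E(z)-E(\om)|\lesssim\la^{-2}$ (Remark \ref{rmk:delta-energy-bubbles}) together with $|dE(z)(w)|\lesssim\la^{-2}(\log\la)^{1/2}\|w\|_z+\|\tauS(\om)\|_{C^1}\|w\|_z$ from Lemma \ref{lemma:tension}; you gesture at this but the Taylor expansion of $E(u)-E(z)$ must be carried out along $u_t=\pi_N(z+tw)$, which again produces error terms needing \eqref{est:diff-second-var-proj}. These are not cosmetic details: they are where the definition of $\jlar$, the cut-off $\elart$, and the precise choice of weight are all used.
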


\section{Properties of the energy on the set of adapted bubbles}
\label{sec:key-lemmas}
\subsection{Basic properties of the second variation of the Dirichlet energy} $ $\\
We first recall the following standard expression for the second variation of the Dirichlet energy for which we include a short proof for the convenience of the reader.
\begin{lemma}
\label{lemma:2ndvar}
For any $u\in H^1(\Si,N)$ and $v,w\in \Gamma^{H^1\cap L^\infty}(u^*TN)$ we can write the second variation of the Dirichlet energy 
$d^2E(u)(v,w):=\tfrac{d}{d\eps}\vert_{\eps=0}\tfrac{d}{d\delta}\vert_{\de=0}E(\pi_N(u+\eps v+ \de w))$ as
\beq\label{eq:second-var}
d^2E(u)(v,w)
=\int_\Si \ \na v\na w -A(u)(\na u,\na u) A(u)(v,w) \dd v_g.
\eeq
\end{lemma}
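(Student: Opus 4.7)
The plan is to compute the mixed partials in the definition by working with the ambient map into $\R^n$ and then integrating by parts on the closed surface $\Si$. Set $U(\eps,\delta):=\pi_N(u+\eps v+\delta w)$, which for $|\eps|,|\delta|$ small takes values in a tubular neighbourhood of $N\hookrightarrow\R^n$ where $\pi_N$ is smooth. Viewing $E(U)=\tfrac12\int_\Si |\na U|^2\,dv_g$ in ambient coordinates and differentiating twice gives
\[
\partial_\eps\partial_\delta E(U)=\int_\Si \na\partial_\eps U\cdot\na\partial_\delta U+\na U\cdot\na\partial_\eps\partial_\delta U\,dv_g.
\]
Using the well-known identity $d\pi_N(p)=P_p$ for $p\in N$, together with $v,w\in T_uN$, one reads off $\partial_\eps U|_{0}=P_u(v)=v$ and $\partial_\delta U|_{0}=w$, so the first summand at $\eps=\delta=0$ contributes $\int_\Si\na v\cdot\na w\,dv_g$, which is already the first term in \eqref{eq:second-var}.

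The key algebraic identification is that $\partial_\eps\partial_\delta U|_{\eps=\delta=0}=D^2\pi_N(u)(v,w)=-A(u)(v,w)$. To establish this I would use the formula $A(p)(v,w)=-(dP_p)(v)(w)$ recorded in the introduction, where $dP$ is the differential along $N$: taking the curve $c(t):=\pi_N(p+tv)\subset N$, which satisfies $c(0)=p$ and $c'(0)=P_p(v)=v$, the chain rule yields $(dP_p)(v)(w)=\tfrac{d}{dt}\big|_{0} d\pi_N(c(t))(w)=D^2\pi_N(p)(c'(0),w)=D^2\pi_N(p)(v,w)$, giving the claimed sign.

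The remaining summand then reduces, after integration by parts on the closed $\Si$, to
\[
\int_\Si\na u\cdot\na\big(-A(u)(v,w)\big)\,dv_g=\int_\Si\Delta_g u\cdot A(u)(v,w)\,dv_g.
\]
Writing $\Delta_g u=\tau_g(u)-A(u)(\na u,\na u)$ from the decomposition of the tension recalled in the introduction, and noting pointwise that $\tau_g(u)\in T_uN$ is orthogonal to the normal-valued $A(u)(v,w)$, this simplifies to $-\int_\Si A(u)(\na u,\na u)\cdot A(u)(v,w)\,dv_g$. Combined with the first summand this is exactly \eqref{eq:second-var}.

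The only point requiring care is justifying the pointwise differentiation and the integration by parts under the $H^1\cap L^\infty$ hypothesis rather than a smooth one. This is routine: since $\pi_N$ is smooth on a tubular neighbourhood and $A$ is smooth on $N$, the chain-rule expressions and the products $\na v\cdot\na w$, $A(u)(\na u,\na u)\cdot A(u)(v,w)$ all lie in $L^1(\Si)$; the identity can then be obtained by approximating $v,w$ by smooth sections of $u^*TN$ and passing to the limit, with continuity of both sides of \eqref{eq:second-var} in the $\Gamma^{H^1\cap L^\infty}$ topology. I do not expect any genuine obstacle in this step.
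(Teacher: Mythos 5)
Your argument is correct, but it takes a slightly different path than the paper's. The paper differentiates the first-variation formula along the one-parameter curve $u_\eps=\pi_N(u+\eps v)$: it writes $d^2E(u)(v,w)=-\tfrac{d}{d\eps}|_{\eps=0}\int\tau_g(u_\eps)\cdot w\,\ed v_g$, rewrites $\tau_g(u_\eps)=P_{u_\eps}(\Delta_g u_\eps)$ and uses self-adjointness of $P_{u_\eps}$ to move the projection onto $w$; the key derivative $\tfrac{d}{d\eps}|_{0}P_{u_\eps}(w)=(dP_u)(v)(w)=-A(u)(v,w)$ then appears immediately because $u_\eps$ is by construction a curve in $N$ with velocity $v$, so no further identification is needed. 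You instead compute the mixed partial $\partial_\eps\partial_\delta E(\pi_N(u+\eps v+\delta w))$ directly in ambient coordinates, which requires the auxiliary fact $D^2\pi_N(u)(v,w)=-A(u)(v,w)$ for $v,w\in T_uN$; you supply this correctly via the curve $c(t)=\pi_N(p+tv)$, noting that only $c'(0)=v$ enters the chain rule, so the ambient Hessian of $\pi_N$ agrees with the intrinsic differential $dP_p$ on tangent vectors. From there the two proofs merge: both arrive at $\int\na v\cdot\na w+\int\Delta_g u\cdot A(u)(v,w)\,\ed v_g$ and finish by observing that the normal component of $\Delta_g u$ is $-A(u)(\na u,\na u)$. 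Your route is a bit more self-contained in that it never invokes the tension-field identity $\tau_g=P(\Delta_g)$; the price is the extra short computation relating $D^2\pi_N$ to the second fundamental form, which the paper's parametrisation sidesteps.
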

\begin{proof}
We write $u_\eps:=\pi_N(u+\eps v)=u+\eps v+O(\eps^2)$ and use that the negative $L^2$ gradient of $E$ is given by 
$ \tau_g(u_\eps)=\Delta_g u_\eps+A(u_\eps)(\na u_\eps,\na u_\eps)=P_{u_\eps}(\Delta_g u_\eps)$  to compute
\beqas 
d^2E(u)(v,w)&=-\tfrac{d}{d\eps}\vert_{\eps=0} \int\tau_g(u_\eps) w \, \ed v_g 
=-\tfrac{d}{d\eps}\vert_{\eps=0}\int \Delta_g u_\eps P_{u_\eps}(w)\dd v_g\\
&=\int -\Delta_g v \,w \,dv_g +\int \Delta_g u\, (-dP_u)(v)(w)\dd v_g\\
&=\int\na v\na w \dd v_g+\int \Delta_g u A(u)(v,w) \dd v_g
\eeqas
which gives the claim as the normal component of $\Delta_g u $ is $-A(u)(\na u,\na u)$. 
\end{proof}

We note that  if $u\in W^{1,p}(\Si)$ for some $p>2$, so in particular if $u=z\in \ZZ$, then $d^2E$ has a unique extension to a continuous bilinear form on $\Gamma^{H^1}(u^*TN)$ and we will in the following consider $d^2 E$ on this space. 

The above expression, combined with  
\eqref{est:z-by-weight}, immediately implies that 
$d^2E$ 
is uniformly bounded 
 on the (non-compact) set $\ZZ$ of adapted bubbles equipped with the weighted norms $\norm{\cdot}_z$ in the sense that
\beq
\label{est:bound-sec-var}
\abs{d^2E(z)(w,v)}\leq C\norm{w}_z\norm{v}_z \text{ for every } z\in\ZZ \text{ and all } v,w\in \Gamma^{H^1}(z^*TN).
\eeq
We also note that differences of second variation terms evaluated at different maps $\hat u,\tilde u \in H^2(\Si, N)$ and corresponding tangent vector fields $\tilde v_{1,2}\in \Gamma^{H^1}(\tilde u^*TN)$, $ \hat v_{1,2}\in \Gamma^{H^1}(\hat u^*TN)$ 
can be bounded by 
\beqa \label{est:diff-second-var}
\abs{d^2E(\tilde u)(\tilde v_1,\tilde v_2)-d^2E(\hat u)(\hat v_1,\hat v_2)}&\leqs 
\int \abs{\na (\hat v_1-\tilde v_1)}\abs{\na \hat v_2}+ \abs{\na \tilde v_1}\abs{\na (\hat v_2-\tilde v_2)}\\
 &+\int (\abs{\na  \tilde u}^2+\abs{\na \hat u}^2) [\abs{\hat v_1-\tilde v_1}\abs{\hat v_2}+\abs{\hat v_2-\tilde v_2}\abs{\tilde v_1} ]
 \\
 &+\int \abs{\hat v_1}\abs{\hat v_2} [\abs{\hat u-\tilde u}\abs{\na \hat u}^2+\abs{\na (\hat u-\tilde u)}\abs{\na \hat u}+\abs{\na (\hat u-\tilde u)}^2],
\eeqa 
where all integrals are computed over $(\Si,g)$.

We will use this formula mainly for $\hat u=\zlar\in\ZZ$ and for maps $\tilde u=u_t=\pi_N(z+tw)$, $t\in[0,1]$, that interpolate between $z$ and a map $u=z+w\in H^2(\Si,N)$ with $\norm{w}_{L^\infty}<\de_N$ and for vector fields obtained by projecting suitable
$ v_{1,2}:\Si\to \R^n$ onto the corresponding tangent spaces. In that situation the above formula, combined with \eqref{est:z-by-weight}, gives
\beqa \label{est:diff-second-var-proj}
&\babs{d^2E(u_t)(P_{u_t} v_1, P_{u_t}v_2)-d^2E(z)(P_{z} v_1, P_{z}v_2)} \\
 &\qquad \leqs  \int \abs{w}\abs{\na v_1}\abs{\na v_2}+ \int (\abs{w}\rho_z+\abs{\na w})(\abs{v_1}\abs{\na v_2}+\abs{v_2}\abs{\na v_1})\\
&\qquad \quad + \int \abs{v_1}\abs{v_2}(\abs{w}\rho_z^2+\abs{\na w}\rho_z+\abs{\na w}^2).
\eeqa

\subsection{Uniform definiteness of the second variation orthogonal to $\ZZ$} $ $\\
One of the key features of our set of adapted bubbles is that the second variation of the energy is uniformly definite in directions orthogonal to $\ZZ$. Namely we prove

\begin{lemma} \label{lemma:definite}
Let  $\bbs:S^2\to N$ be any harmonic map and let $\ZZ$ be the set of adapted bubbles defined above. Then there exists
$c_0>0$ 
so that for every $z\in \ZZ$ we can write the orthogonal complement $\VV_z$ of $T_z\ZZ$ in $(\Gamma^{H^1}(z^*TN),\langle\cdot,\cdot\rangle_z)$ as an orthogonal sum $\VV_z=\VV_z^+\oplus \VV_z^-$
of spaces which are so that for $v^\pm\in \VV_z^\pm$
$$d^2E(z)(v_+,v_-)=0 \text{ and } \pm d^2E(z)(v_{\pm},v_{\pm})\geq c_0 \norm{v_\pm}^2_{z}. $$
Here $\langle\cdot, \cdot\rangle_z$ and the associated norm are as in Definition \ref{def:inner-product}.
\end{lemma}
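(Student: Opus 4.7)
The plan is to argue by contradiction and compactness, reducing the uniform definiteness of $d^2E$ orthogonal to $\ZZ$ to the spectral gap of the Jacobi operator $L_{\bbs}$ at the underlying harmonic sphere. Concretely, I would introduce the self-adjoint operator $\Lambda_z:\VV_z\to \VV_z$ characterised by $\langle \Lambda_z v, w\rangle_z = d^2E(z)(v,w)$, well-defined in view of \eqref{est:bound-sec-var}, and take $\VV_z^{\pm}$ to be the positive/negative spectral subspaces. The claim $\pm d^2E(z)(v_\pm,v_\pm)\geq c_0\norm{v_\pm}_z^2$ is then equivalent to a uniform spectral gap $|\mathrm{spec}(\Lambda_z)|\geq c_0 > 0$ as $z$ ranges over $\ZZ$.

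To establish this spectral gap I would suppose for contradiction the existence of $z_n = z_{\la_n}^{a_n,\om_n}\in \ZZ$ and $v_n\in \VV_{z_n}$ with $\norm{v_n}_{z_n}=1$ and $|d^2E(z_n)(v_n,v_n)|\to 0$. Via the conformal rescaling $\pi_{\la_n}\circ F_{a_n}:F_{a_n}^{-1}(\DD_\rr)\to S^2\setminus \{p^*\}$, define $\tilde v_n$ on exhausting subdomains of $S^2$. The choice of the weight $\rho_z$ in Definition \ref{def:inner-product} is made exactly so that, using conformal invariance of the Dirichlet integral in two dimensions, the restriction of $\norm{v_n}_{z_n}$ to the bubble region $B_\iota(a_n)$ converts to the standard $H^1(S^2)$-norm of $\tilde v_n$ on the corresponding subregion. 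After extracting subsequences, $\om_n\to \om_\infty\in \HH_1^{\si_1}(\bbs)$ in $C^2$ (relying on \eqref{claim:Ck-equiv} to upgrade $L^2$-compactness to $C^2$-compactness on $\HHz(\bbs)$, and on compactness of $SO(3)$ in the rotation factor) and $\tilde v_n\rightharpoonup \tilde v_\infty\in \Gamma^{H^1}(\om_\infty^*TN)$. Using the expansions \eqref{est:v-hat-u-ball}--\eqref{est:v-away} one shows that on the neck region and outside $B_{\tilde\iota}(a_n)$ both the gradient and potential contributions to $d^2E(z_n)(v_n,v_n)$ are $o(1)$: there $|\na z_n|\leqs \la_n^{-1}$, so the potential term is negligible, while the $\rho_z$-weighted $L^2$-bound on $v_n$ together with \eqref{est:Lp-by-z-norm} controls the remaining cross terms. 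Hence the second variation passes to the limit, yielding $d^2E(\om_\infty)(\tilde v_\infty,\tilde v_\infty)=0$.

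The orthogonality condition $v_n\perp_{\langle\cdot,\cdot\rangle_{z_n}} T_{z_n}\ZZ$ is translated via the rescaling into an orthogonality of $\tilde v_\infty$ to the sum of the Möbius variations of $\om_\infty$ and of $T_{\om_\infty}\HHz(\bbs)$: the directions $\partial_\la z_n$ and $\partial_{a^i} z_n$ rescale to the infinitesimal dilation and translation Möbius vector fields along $\om_\infty$, whereas variations of $\om_n$ in $\HH_1^{\si_1}(\bbs)$ directly produce variations in $T_{\om_\infty}\HH_1^{\si_1}(\bbs)$. By \eqref{claim:split-deg} of Lemma \ref{lemma:Simon} these two families together span $\ker L_{\om_\infty}$, so $\tilde v_\infty\perp \ker L_{\om_\infty}$ in $L^2$. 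The standard spectral gap $|d^2E(\bbs)(w,w)|\geq c\norm{w}_{H^1}^2$ on $(\ker L_\bbs)^\perp$, valid near any harmonic sphere into an analytic target (cf.\ \cite{Simon}), transfers to $\om_\infty$ by a straightforward perturbation argument provided $\si_1$ is chosen small, forcing $\tilde v_\infty=0$. It then remains to exclude the concentration scenario $\tilde v_\infty=0$ with $\norm{v_n}_{z_n}=1$: since $|\na z_n|^2$ concentrates on the bubble region where $\tilde v_n\to 0$ in $L^2_{\mathrm{loc}}$ by Rellich, the potential term in \eqref{eq:second-var} vanishes in the limit, so $d^2E(z_n)(v_n,v_n)=\norm{\na v_n}_{L^2}^2+o(1)\to 0$, and combined with control of the $\rho_z$-weighted $L^2$-norm through \eqref{est:MV} (after reducing to vector fields of vanishing mean, which can be arranged as the constant modes are accounted for in $T_z\ZZ$) this contradicts $\norm{v_n}_{z_n}=1$.

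The main obstacle I anticipate is the interplay between the non-compactness as $\la_n\to \infty$ and the weighted inner product: the rescaling argument relies on the precise balance in the definition of $\rho_z$ on the bubble region versus the complement, and care is needed on the transition annulus where $\vlar$ interpolates between $\tilde \om_\la + \jlar$ and the Green's-function ansatz. A secondary difficulty in the non-integrable setting is ensuring that the spectral gap at $\bbs$ genuinely transfers to $\om_\infty\in \HH_0^{\si_1}(\bbs)$ despite $\HHz(\bbs)$ not consisting of harmonic maps; here one uses that $\tau_{g_{S^2}}(\om_\infty)$ is small in $C^k$ by \eqref{claim:Ck-equiv}, making $L_{\om_\infty}$ a small perturbation of $L_\bbs$ in the operator norm relevant for the quadratic form.
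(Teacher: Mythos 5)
Your overall strategy (spectralize $d^2E$ relative to $\langle\cdot,\cdot\rangle_z$, argue by contradiction with a sequence $z_n$, $v_n$, rescale to $S^2$, use \eqref{claim:split-deg} to show the weak limit is orthogonal to $\ker L_\bbs$) matches the paper's. However, there are two substantive gaps.

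First, your invocation of a ``standard spectral gap $|d^2E(\bbs)(w,w)|\geq c\norm{w}_{H^1}^2$ on $(\ker L_\bbs)^\perp$'' is false: $\bbs$ is not a local minimiser in general, the Jacobi operator $L_\bbs$ has negative eigenvalues, and the quadratic form $d^2E(\bbs)$ takes both signs on $(\ker L_\bbs)^\perp$. What is true is a gap in the \emph{spectrum} of $L_\bbs$ around $0$, i.e.\ every nonzero eigenvalue has modulus $\geq c$. But from this you cannot conclude $w=0$ merely from $d^2E(\bbs)(w,w)=0$ and $w\perp\ker L_\bbs$; indeed any combination $w=w^++w^-$ of a positive and a negative eigenvector with $d^2E(w^+,w^+)=-d^2E(w^-,w^-)$ satisfies both. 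In fact your limiting information ``$d^2E(\om_\infty)(\tilde v_\infty,\tilde v_\infty)=0$'' is also insufficient on its own: to force $\tilde v_\infty\in\ker L_\bbs$ you must show $d^2E(\bbs)(\tilde v_\infty,\eta)=0$ for \emph{all} test fields $\eta$, not only $\eta=\tilde v_\infty$. To get this you really do need to work with eigenfunctions of the projected Jacobi operator: $\hat L_{z_n}v_n=\mu_n v_n$ gives you $d^2E(z_n)(v_n,\eta)=\mu_n\langle v_n,\eta\rangle_{z_n}+O(\norm{P^{T_{z_n}\ZZ}\eta}_{z_n})$ for arbitrary $\eta$, and it is precisely this family of identities, tested against carefully constructed $\eta_i$ adapted to $z_n$ and passed to the limit, that shows $w_\infty$ is a genuine Jacobi field. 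The paper carries out this step explicitly (the construction of $\eta_i$, bounding $\text{err}_i$ and $\norm{P^{T_{z_i}\ZZ}\eta_i}_{z_i}$); your proposal has no substitute for it.

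Second, the no-concentration/no-vanishing step is not secure. You want to rule out $\tilde v_\infty=0$ by arguing that the potential term in $d^2E$ drops out and hence $\norm{\na v_n}\to 0$, then to contradict $\norm{v_n}_{z_n}=1$ via \eqref{est:MV} ``after reducing to vanishing mean, which can be arranged as the constant modes are accounted for in $T_z\ZZ$''. The constant vector fields are \emph{not} elements of $T_z\ZZ$; the variations $\peps z^{a,\om^{(\eps)}}_{\la}$ are only approximately constant away from the bubble (with $O(\la^{-1})$ corrections and nontrivial profile in the bubble region), so the reduction is not available as stated, and in any case for a constant $c$ one has $\norm{c}_z^2=|c|^2\int_\Si\rho_z^2\,dv_g$ which is of order $|c|^2$, so the constant modes do not a priori carry small $\norm{\cdot}_z$-norm. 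The paper avoids this issue entirely by a quantitative identity available for eigenfunctions: writing $\tilde L_z=\Id-K_z$ with $K_z$ compact, the eigenvalue equation gives $\mu_n=1-\int_\Si b_{z_n}(v_n)v_n$ with $|b_z(v)|\lesssim\rho_z^2|v|$, and splitting the integral at scale $\la_n^{-1/3}$ (where $\rho_{\la_n}\leq\la_n^{-1/3}$) yields a definite lower bound $\int_{\DD_{\la_n^{-1/3}}}\rho_{\la_n}^2|\tilde v_n|^2\geq c_1>0$ once $\mu_n\to 0$. This is the clean mechanism by which the eigenvalue equation converts into non-triviality of $w_\infty$, and your proposed replacement does not establish it.

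Smaller issues: the limit object should be a vector field along $\bbs$ (taking $\om_n\to\bbs$ after shrinking $\si_1$), not along a generic $\om_\infty$; and the weak $H^1(S^2)$-convergence of the rescaled $w_n$ requires the cut-and-extend construction (cutting off at scale $\la_n^{-1/3}$ and extending by the annular mean) to control the contribution from the neck region, which your proposal does not address.
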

 We remark that the analogue property holds true for the manifold used in \cite{Simon} in the proof of the classical \Lojns-Simon inequality, and there directly follows as maps in this manifold are $C^k$ close to $\bbs$ and as $T_\bbs\HH=\ker(L_\bbs)$. 

Here we have to proceed with more care since our set $\ZZ$ is non-compact.
As in the proof of the corresponding statement  \cite[Lemma 3.6]{MRS} for the $H$-surface energy we prove this result by establishing a uniform gap around $0$ in the spectrum of the projected Jacobi operator, though here use energy considerations rather than Lorentz-space techniques as we do not have the  explicit divergence structure present in \cite{MRS}. 

\begin{proof}[Proof of Lemma \ref{lemma:definite}]
We note that for each fixed $z\in \ZZ$ the space $\Gamma^{H^1}(z^*TN)$ equipped with $\innz$
is a Hilbert-space so Riesz's representation theorem allows us to consider the corresponding
Jacobi-operator $\tilde L_z:\Gamma^{H^1}(z^*TN)\to \Gamma^{H^1}(z^*TN)$ which is
characterised by 
$$d^2E(z)(v,w)=\langle \tilde L_zv,w\rangle_z \text{ for every } v,w\in \Gamma^{H^1}(z^*TN).$$
From the definition of the inner product and Lemma \ref{lemma:2ndvar}
it is easy to see that $\tilde L_z=\Id-K_z$ for $K_z:\Gamma^{H^1}(z^*TN)\to \Gamma^{H^1}(z^*TN)$ characterised by 
\beq
\label{eq:PDE-Kz}
P_z(-\Delta_g K_z(v))+\rho_z^2K_z(v)=b_z(v):=\rho_z^2 v+\sum\langle A(z)(\na z,\na z), \nu^j_z\rangle P_z(d\nu^j_z(v)),\eeq
$\{\nu^j_p\}$ a local orthonormal frame of $T_p^\perp N$. We note that the right hand side is bounded by $\abs{b_z(v)}\leq C\rho_z^2\abs{v}$ so 
as $ \rho_z\in L^\infty(\Si)$ we have that $K_z$ is a compact and selfadjoint operator on
$(\Gamma^{H^1}(z^*TN), \langle \cdot,\cdot\rangle_z)$. 

In order to construct the desired splitting of $\VV_z$ we then consider the projected Jacobi operator $\hat L_z:=P^{\VV_z}\circ \tilde L_z\vert_{\VV_z}=\Id_{\VV_z}-\hat K_z$, 
 $P^{\VV_z}$ the orthogonal projection onto $\VV_z$, 
 which can be equivalently characterised as the unique operator $\hat L_z:\VV_z\to\VV_z$ 
which is so that 
$$d^2E(z)(v,w)=\langle \hat L_zv,w\rangle_z \text{ for every } v,w\in \VV_z.$$
As  $\hat K_z=P^{\VV_z}\circ K_z\vert_{ \VV_z}$
is also selfadjoint and compact 
we know that the eigenvalues of $\hat L_z$ are real and tend to $1$ and that 
there exists an orthonormal eigenbasis of 
  $(\VV_z, \innz)$. 
It hence suffices to show that there exists $c_0>0$ so that, after increasing $\la_1$ and decreasing $\si_1$ if necessary, 
 none of the operators $\hat L_z$, $z\in \ZZ$, has an eigenvalue in $[-c_0,c_0]$. This will imply that the lemma holds true for $\VV_z^\pm$ chosen as span of the eigenfunctions to positive respectively negative eigenvalues. 

To prove this eigenvalue gap we argue by contradiction. Suppose there exist sequences of
adapted bubbles $z_i=z_{\la_i}^{a_i,\om_i}\in \ZZ$ with $\la_i\to \infty$ and $\om_i\to \bbs$ and 
of elements $v_i\in \VV_{z_i}$,  normalised to $\norm{v_i}_{z_i}=1$, so that 
$\hat L_{z_i} v_i=\mu_i v_i$ for some  $\mu_i\to 0$. 
We first claim that there exists a number $c_1>0$ so that for all sufficiently large $i$ 
\beq
\label{est:ev-energy-smallball}
\int_{\DD_{\la_i^{-1/3}}} \rho_{\la_i}^2  \abs{\tilde v_i}^2\dd x\geq c_1, \quad \text{ where }\tilde v_i:=v_i\circ F_{a_i}^{-1}.
\eeq
To see this we use that $\rho_{z_i}$ is of order $O(\la_i^{-1})$ away from the ball $B_{\iota}(a_i)$, while
 $\rho_{\la_i}=\frac{\la_i}{1+\la_i^2\abs{x}^2}\leq \la_i^{-\frac13}$ on $\DD_{r_0}\setminus \DD_{\la_i^{-1/3}}$.
We can thus bound 
\beqas 
\mu_i&=\langle \hat L_{z_i}v_i,v_i\rangle_{z_i}=\norm{v_i}_{z_i}^2-\langle K_{z_i}(v_i),v_i\rangle_{z_i} =1-\int_\Si b_{z_i}(v_i)v_i \dd v_g\\
&\geq 1-C\int_{\DD_{\la_i^{-1/3}}} \rho_{\la_i}^2\abs{\tilde v_i}^2 \dd x
-C\la_i^{-\frac23}\norm{v_i}_{L^2(\Si)}^2\geq 1-C\la_i^{-\frac23}\log(\la_i)- C\int_{\DD_{\la_i^{-1/3}}} \rho_{\la_i}^2\abs{\tilde v_i}^2 \dd x,
\eeqas
where we use that the Poincar\'e hyperbolic metric is uniformly equivalent  to the Euclidean metric on $\DD_{r_0}$ in the penultimate step and \eqref{est:Lp-by-z-norm} in the last step. 
As $\mu_i\to 0$ this yields the claimed lower bound \eqref{est:ev-energy-smallball} for all sufficiently large  $i$.

We now proceed to construct a sequence  of maps $w_i: S^2\to \R^n$ that converges  to a limit $w_\infty$ which 
is a non-trivial Jacobifield at $\bbs$ but also orthogonal 
to 
\beq
\label{def:Xbbs}
X_\bbs:= T_\bbs \{\bb\circ M: \bb \in \HHz(\bbs), M\in \Mob \}.\eeq
This leads to the desired contradiction since $\HHz(\bbs)$ is chosen in a way that ensures that $X_\bbs$ agrees with the space $\ker(L_\bbs)=\ker(\hat L_\bbs)$
 of Jacobifields at $\bbs$, compare Lemma \ref{lemma:Simon}.

To construct these maps $w_i$ 
we first define  
$$\tilde w_i=\phi_i \tilde v_i(\la_i^{-1}\cdot )+(1-\phi_i) \bar v_{i}:\R^2\to \R^n,$$
where we let $\bar v_{i}=
 \fint_{A_i}\tilde v_i dx$ be the meanvalue over the annulus 
$A_i= \DD_{2\la_i^{-1/3}}\setminus \DD_{\la_i^{-1/3}}$ and set 
$\phi_i(x)=\phi(\la_i^{-\frac23}\abs{x})$ for some fixed $\phi\in C_c^{\infty}([0,2),[0,1])$ with $\phi\equiv 1$ on $[0,1]$.

As $\tilde w_i$ is constant near infinity, we have $w_i:=\tilde w_i\circ \pi^{-1}\in H^1(S^2,\R^n)$ and we can bound 
\beqas
\norm{w_i}_{H^1(S^2)}^2&=\int_{\R^2}\abs{\na \tilde w_i}^2+\abs{\partial_{x_1}\pi\wedge \partial_{x_2}\pi}\abs{\tilde w_i}^2 \dd x=
\int_{\R^2}\abs{\na \tilde w_i}^2+\thalf\abs{\na \pi}^2\abs{\tilde w_i}^2 \dd x\\
&
 \leqs \int_{\DD_{2\la_i^{-1/3}}}\abs{\na \tilde v_i}^2+\abs{\na \pi_{\la_i}}^2 \abs{\tilde v_i}^2
+\fint_{A_i}\abs{\tilde v_i-\bar{v_i}}^2 
+\abs{\bar v_{i}}^2
\norm{\na \pi}_{L^2(\R^2\setminus \DD_{\la_i^{2/3}})}^2 
\\
&\leqs \norm{v_i}_{z_i}^2+C\la_i^{-4/3}\abs{\bar v_{i}}^2\leqs  \norm{v_i}_{z_i}^2
\eeqas
where the last step follows as
$\rho_{\la_i}\geq 
 c \la_i^{-\frac13}$ on $A_i$ and thus
 $\la_i^{-\frac43}\abs{\bar v_{i}}^2\leqs \la_i^{-\frac23}\int_{A_i}\abs{\tilde v_i}^2 \dd x\leqs \norm{v_i}_{z_i}^2$.
 
After passing to a subsequence the maps $w_i$ thus converges to a limit $w_\infty$ weakly in $H^1(S^2,\R^n)$, strongly in $L^2(S^2,\R^n)$ and almost everywhere. 
Away from the shrinking discs $\pi(\R^2\setminus \DD_{\la_i^{2/3}})\subset S^2$ the maps
$w_i=v_i \circ (\pi_{\la_i}\circ F_{a_i})^{-1}$ are tangential to $N$ along 
\beq
\label{eq:hatzi}
\hat z_i:=z_i\circ (\pi_{\la_i}\circ F_{a_i})^{-1}=\omega_{i}+O(\la_i^{-1})\to \bbs
\eeq
so the limit $w_\infty$ must be tangential along $\bbs$, i.e. an element of $\Gamma^{H^1}(\bbs^*TN)$.
Furthermore, $w_\infty$ is non-trivial as  $w_i\to w_\infty$ strongly in $L^2(S^2,\R^n)$ and as the $L^2$ norms of the maps $w_i$ are bounded away from zero thanks to 
\eqref{est:ev-energy-smallball}.
We now want to prove that $w_\infty$ is orthogonal to the space $X_\bbs$ with respect to 
the inner product 
\beq
\label{def:inner-prod-S2}
\langle v,w\rangle:=\int_{S^2} \na w\na w+c_\gamma w v dv_{g_{S^2}}.
\eeq
\begin{rmk}
\label{rmk:inner-prod-S2}
Here we set  
$c_\gamma=\frac14$ if $\gamma=1$ as this ensures that 
$(\pi_\la\circ F_a)^*g_{S^2}=c_\gamma \rho_{\zlar}^2g $ on $B_\iota(a)$
while we set 
 $c_\gamma=1$ if $\gamma\geq 2$ and use that in this case 
$((\pi_\la\circ F_a)^*g_{S^2})(p)=c_\gamma \rho_{\zlar}^2(1+O(\dist_g(p,a)^2)g(p)$ for $p\in B_\iota(a)$.
\end{rmk}
We can use 
 the following lemma, see Appendix \ref{appendix:technical} for a sketch of the proof. 
\begin{lemma}\label{lemma:proj-on}
Let $z_i=z_{\la_i}^{a_i,\bb_i}$ be a sequence of adapted bubbles for which $\la_i\to \infty$ and $\bb_i\to \bbs$. Then there exist bases $\{e_j^{i}\}_{j=1}^K$ of $T_{z_i}\ZZ$ that are orthonormal with respect to $\langle\cdot,\cdot\rangle_{z_i}$ and that converge to an orthonormal basis $\{e_j^{\infty}\}_{j=1}^K$ of 
$ (X_{\bbs}, \langle\cdot,\cdot\rangle)$
in the sense that 
$\hat e_j^{i}:= e_j^{i}\circ (\pi_{\la_i}\circ F_{a_i})^{-1}\to e_j^\infty \text{ smoothly locally on } S^2\setminus\{p^*\}$
while 
\beq
\label{claim:basis2}
\lim_{\Lambda\to \infty}\limsup_{i\to \infty} \|e_j^{i}\|_{z_i,\Sigma\setminus B_{\Lambda\la_i^{-1}}(a_i)}=0 \text{ for } j=1,\ldots,K.\eeq
\end{lemma}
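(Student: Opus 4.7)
The plan is to construct the bases $\{e_j^i\}$ by Gram--Schmidt orthonormalisation, with respect to $\innz$, of a natural spanning set of $T_{z_i}\ZZ$ obtained from the parametrisation $(a,\la,\om)$ of $\ZZ$. I would first compute the parameter-induced tangent vectors $\partial_\la z$, $\partial_{a^k}z$ ($k=1,2$) and $dz|_\om(w)$ for $w$ ranging over a basis of $T_{\om_i}\HH_1^{\si_1}(\bbs)$, using \eqref{def:tilde-v}--\eqref{def:v-away}. These vectors live at different scales in $\innz$, so I would rescale them as $X^i_\la:=\la_i\partial_\la z|_{z_i}$, $X^i_{a^k}:=\la_i^{-1}\partial_{a^k}z|_{z_i}$ and $X^i_w:=dz|_{\om_i}(w)$, so that the change of variables $y=\la_i x$ on $B_\iota(a_i)$ shows each $X^i_\bullet$ has $\innz$-norm bounded away from $0$ and $\infty$ uniformly in $i$.

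Next I would analyse the pullbacks $\hat X^i_\bullet := X^i_\bullet \circ (\pi_{\la_i}\circ F_{a_i})^{-1}$ on $S^2\setminus\{p^*\}$. Smooth convergence of $\om_i\to\bbs$ together with \eqref{eq:writing-v-lart}--\eqref{est:v-away} implies that the $\hat X^i_\bullet$ converge smoothly on compact subsets of $S^2\setminus\{p^*\}$ to explicit tangent vectors at $\bbs$: the $\la$-variation yields a M\"obius dilation, the $a^k$-variations yield M\"obius translations, and the variations in $\om$ produce the remaining directions in $T_\bbs \HHz(\bbs)$ together with those coming from $SO(3)$. By \eqref{claim:split-deg} and the definition of $\HH_1^{\si_1}(\bbs)$ these span $X_\bbs$, and a dimension count confirms that $K=\dim T_{z_i}\ZZ=\dim X_\bbs$ so that the limits $\{\hat X^\infty_j\}$ form a basis of $X_\bbs$.

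The main step is the convergence of the Gram matrix $G^i_{jk}:=\langle X^i_j,X^i_k\rangle_{z_i}$ to $G^\infty_{jk}:=\langle \hat X^\infty_j,\hat X^\infty_k\rangle$ computed with \eqref{def:inner-prod-S2}. On $B_\iota(a_i)$ the change of variables $y=\la_i x$ together with Remark \ref{rmk:inner-prod-S2} matches the Dirichlet parts, while the weighted $L^2$-piece $\int_\Si \rho_{z_i}^2 X^i_j\cdot X^i_k\dd v_g$ converts to $c_\gamma\int_{S^2}\hat X^i_j\cdot \hat X^i_k\dd v_{g_{S^2}}$ modulo an error $O(\dist_g(\cdot,a_i)^2)$ that vanishes in the limit. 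Outside $B_\iota(a_i)$ the rescaled vectors are of order $O(\la_i^{-1})$ in $C^2$ and $\rho_{z_i}\le C\la_i^{-1}$, so this region contributes at most $O(\la_i^{-2}\log\la_i)$ by \eqref{est:Lp-by-z-norm}. Since $G^\infty$ is invertible, Gram--Schmidt produces the desired orthonormal bases $\{e_j^i\}$ whose smooth local convergence $\hat e_j^i\to e_j^\infty$ on compact subsets of $S^2\setminus\{p^*\}$ follows by continuity of the Gram--Schmidt coefficients in $G^i$.

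Finally, the concentration property \eqref{claim:basis2} follows from the same analysis: $\|e_j^i\|_{z_i,\Si\setminus B_{\La\la_i^{-1}}(a_i)}^2$ decomposes into the part on $B_\iota(a_i)\setminus B_{\La\la_i^{-1}}(a_i)$, which after rescaling by $\la_i$ converges to the $S^2$-tail integral $\int_{\pi(\R^2\setminus \DD_\La)}(\abs{\na e_j^\infty}^2+c_\gamma \abs{e_j^\infty}^2)\dd v_{g_{S^2}}$ tending to $0$ as $\La\to\infty$, plus an outer contribution of order $O(\la_i^{-2}\log\la_i)$ as above. The main obstacle is propagating the various $O(\la_i^{-1})$ error terms from \eqref{est:elart}--\eqref{est:v-away} and from the projection $\pi_N$ uniformly through the Gram matrix computation, so as to ensure that they do not spoil the convergence of the Gram--Schmidt coefficients; this is where the $C^2$-control of variations afforded by \eqref{ass:var} and the sharp $C^2$-decay of $\elart$ in \eqref{est:elart} become essential.
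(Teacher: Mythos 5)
Your proposal is correct and follows essentially the same route as the paper: rescale the parametrisation-induced tangent vectors of $T_{z_i}\ZZ$ so that their $\langle\cdot,\cdot\rangle_{z_i}$-norms are of order one, pull back by $(\pi_{\la_i}\circ F_{a_i})^{-1}$, show the Gram matrix converges using the matching of the weighted inner product with the sphere inner product of Remark \ref{rmk:inner-prod-S2}, and apply Gram--Schmidt. The only stylistic difference is directional: the paper first fixes an orthonormal basis $\{e_j^\infty\}$ of $X_\bbs$ and constructs $\ZZ$-variations approximating each $e_j^\infty$ (so the Gram matrix is already $\Id+o(1)$ and Gram--Schmidt is a small correction), whereas you start from the raw parametrisation basis and verify convergence to a general invertible limit Gram matrix before orthonormalising.
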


As $w_i\weakto w_\infty$ in $H^1(S^2)$
we obtain that for $j=1,\ldots,K$, 
\beqas
\langle w_\infty,e_j^\infty\rangle&= \lim_{\La\to \infty} \int_{\pi(\DD_\La)} \na w_\infty\na e_j^\infty+c_\gamma w_\infty e_j^\infty \dd v_{g_{S^2}}\\
&=\lim_{\La\to\infty}\lim_{i\to \infty}\int_{\pi(\DD_{\La})} \na w_i\na \hat e_j^i+c_\gamma w_i \hat e_j^i \dd v_{g_{S^2}} \\
&=\lim_{\La\to\infty}\lim_{i\to \infty}\int_{F_{a_i}^{-1}(\DD_{\La\la_i^{-1}})}
\nabla v_i\nabla e_j^{i} + v_i e_j^{i}\rho_{z_i}^2 \dd v_g\\
&=-\lim_{\La\to\infty}\lim_{i\to \infty}\int_{\Si\setminus F_{a_i}^{-1}(\DD_{\La\la_i^{-1}})} \nabla v_i\nabla e_j^{i}+v_i e_j^{i}\rho_{z_i}^2\dd v_g=0,
\eeqas
where we use Remark \ref{rmk:inner-prod-S2} in the third step, the orthogonality of $v_i$ to $T_{z_i}\ZZ$ 
in the penultimate step and \eqref{claim:basis2} as well as that $\norm{v_i}_{z_i}=1$ in the last step.

Having thus shown that $w_\infty\perp X(\bbs)=\ker(L_\bbs)$
it now remains to show that
\beq \label{est:claim-Jacobi}
d^2E(\bbs)(w_\infty,\eta)=0 \text{ for all }\eta\in \Gamma^{H^1}(\bbs^*TN).\eeq 
We note that this is trivially true if $\eta$ itself is a Jacobi-field and that it hence suffices to consider $\eta\in \Gamma^{H^1}(\bbs^*TN)$ with $\eta\perp  \ker(\hat L_\bbs)$.

Given such an $\eta$ 
we set $\bar\eta_i:=\fint_{A_{r_0}}\eta\circ \pi_{\la_i} dx$, $A_{r_0}:=\DD_{r_0}\setminus \DD_{\frac{r_0}{2}}$, and define $\eta_i\in \Gamma^{H^1}(z_i^*TN)$ as $\eta_i=P_{z_i}(\bar\eta_i)$ on $\Si\setminus B_{\iota}(a_i)$
while for $p=F_{a_i}^{-1}(x)\in B_{\iota}(a_i)$ 
$$\eta_i(p):=P_{z_i(p)}(\psi(x)\eta(\pi_{\la_i}(x))+(1-\psi(x))\bar\eta_i)$$
for a fixed cut-off $\psi\in C_{c}^\infty(\DD_{r_0})$ with $\psi\equiv 1$ on $\DD_{\frac{r_0}{2}}$.

As $d^2E$ is conformally invariant and as $\la_i^{-1/3}\leq\half r_0$ for sufficiently large $i$ 
we get  
\beq
\label{est:123}
d^2E(z_i)(v_i,\eta_i)=\int_{\pi(\DD_{\la_i^{2/3}})}\na (P_{\hat z_i}\eta)\na w_i+A(\hat z_i)(\na \hat z_i,\na \hat z_i)A(\hat z_i)(P_{\hat z_i}\eta,w_i) dv_{g_{S^2}} +\text{err}_i,\eeq
for $\hat z_i$ defined by \eqref{eq:hatzi} and $\abs{\text{err}_i}\leq C\norm{v_i}_{z_i}\norm{\eta_i}_{z_i,\Si\setminus F_{a_i}^{-1}(\DD_{\la_i^{-1/3}})}$.
We recall that 
 $\rho_{z_i}\leqs \la_i^{-1}$ away from $B_{\tilde \iota}(a)$  and 
note 
that a short calculation, similar to the proof of \eqref{est:MV} carried out in the appendix, gives $\abs{\bar\eta_i}\leqs (\log\la_i)^\half \norm{\eta}_{H^1(S^2)}$. As $\norm{v_i}_{z_i}=1$, this allows us to conclude that
\beqas
\abs{\text{err}_i}
&
\leqs \la_i^{-1} \abs{\bar \eta_i}  
 +\norm{\eta\circ\pi_{\la_i}-\bar\eta_i}_{L^2(A_{r_0})}
+\norm{\rho_{\la_i}\abs{\eta\circ\pi_{\la_i}}+\abs{\na (\eta\circ\pi_{\la_i})}}_{L^2(\DD_{r_0}\setminus \DD_{\la_i^{-1/3}})}\\
&\leqs \la_i^{-1}(\log\la_i)^\half \norm{\eta}_{H^1(S^2)} +\norm{\eta}_{H^1(\pi(\R^2\setminus \DD_{\la_i^{2/3}}))}\to 0.
\eeqas
Combined with 
 $w_i\weakto w_\infty$ in $H^1(S^2)$ and
$\norm{\hat z_i-\bbs}_{C^1(\pi(\DD_{\la_ir_0}))}\to 0$, which follows as $\bb_i\to \bbs$ smoothly on $S^2$ and 
$\norm{\hat z_i-\om_{i}}_{C^1(\pi(\DD_{\la_ir_0}))}\leq C\la_i^{-1}\to 0$, this shows that the right hand side of \eqref{est:123} converge to 
$d^2E(\bbs)(w_\infty,\eta)$.
We thus conclude that
\beqas
\abs{d^2E(\bbs)(w_\infty,\eta)}&=\lim_{i\to \infty}\abs{d^2E(z_i)(v_i,\eta_i)}\leq \abs{\langle \hat L_{z_i}v_i, P^{\VV_i}\eta_i\rangle}+C\norm{v_i}_{z_i}\norm{P^{T_{z_i}\ZZ}\eta_i}_{z_i}\\
&\leq \abs{\mu_i}\norm{\eta_i}_{z_i}+C\norm{P^{T_{z_i}\ZZ}\eta_i}_{z_i}.
\eeqas 
As $\norm{\eta_i}_{z_i}\leq C\norm{\eta}_{H^1(S^2)}+C\la_i^{-1}\abs{\bar\eta_i} $ is uniformly bounded and as we have assumed that $\mu_i\to 0$, we know that the first term in this estimate tends to zero as $i\to \infty$. We can furthermore use Lemma \ref{lemma:proj-on} to see that  for $j=1,\ldots,K$
\beqas
\lim_{i\to\infty}\langle e_i^j, \eta_i\rangle_{z_i}
&=\lim_{\Lambda\to \infty}\lim_{i\to\infty} \langle e_i^j,\eta_i\rangle_{z_i,\Si\setminus F_{a_i}^{-1}(\DD_{\Lambda \la_i^{-1}})}+\lim_{\Lambda\to \infty}\lim_{i\to\infty} \langle \hat e_i^j,P_{\hat z_i}(\eta)\rangle_{\pi(\DD_\Lambda)}\\
&= 0+\lim_{\Lambda \to \infty}\langle e_\infty^j,\eta\rangle_{\pi(\DD_\Lambda)}=\langle e_\infty^j,\eta\rangle =0,
\eeqas
where the last step follows as $\eta\perp \ker(L_\bbs)=X_\bbs$. Hence 
 $\norm{P^{T_{z_i}\ZZ}\eta_i}_{z_i}\to 0$ and we indeed obtain that $d^2E(\bbs)(w_\infty, \eta)=0$. Thus $w_\infty\in \ker(L_\bbs)$ 
  contradicting the previously established fact that 
$w_\infty$ is a non-trivial element of $(\ker(L_\bbs))^\perp$. 
\end{proof}

\subsection{Expansion of the energy on the set $\ZZ$ of adapted bubbles} $ $\\
The goal of this section is to identify variations in the space of adapted bubbles for which the 
leading order term in the energy expansion appears with a known sign and scaling. 

In the integrable case, where all elements of $\ZZ$ are built out of harmonic maps $\om:S^2\to N$, we will only need to consider variations $(z_\eps)$ induced by a change of the bubble parameter. In the general case we will additionally need to consider $(z_\eps)$  induced by variations of the underlying maps $\om^{(\eps)}\in \HHone$. 
To treat both types of variations at the same time we first show. 
\begin{lemma} \label{lemma:main-term}
For any variation $z_\ep=z_{\la_\eps}^{a, \om^{(\eps)}}$ in $\ZZ$ for which \eqref{ass:var} holds we have
\beq
\label{eq:expansion-E-general}
\ddepsz E(z_\eps)=\tfrac{d\la}{d\eps}\cdot 
\int_{\DD_{\frac{r_0}{2}}} j_\la^{a,\om} \Delta \partial_\la \omlrt \dd x+\ddepsz E(\om^{(\eps)}) + \text{err} 
\eeq
for an error term that is bounded by 
\beq\label{est:exp-general-error}
\abs{\text{err}}\leq C \la^{-3} +
C\la^{-2} 
\big[\norm{\peps \om^{(\eps)}}_{C^2(S^2)}+\norm{\tauS(\om)}_{C^1(S^2)}\big].
\eeq

\end{lemma}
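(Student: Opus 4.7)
My plan is to compute $\peps E(z_\eps)|_{\eps=0}$ via $\int_\Si \na z \cdot \na \peps z_\eps \dd v_g$, splitting the integration into the bubble region $B_{\tilde\iota}(a)$ (in coordinates $\DD_{\frac{r_0}{2}}$, where $\phi\equiv 1$) and its complement. On $\Si \setminus B_{\tilde\iota}(a)$, the estimates \eqref{est:v-away} and \eqref{est:elart}, combined with smoothness of $\pi_N$ near $N$ and the assumption \eqref{ass:var}, give $|\na z_\eps|$ and $|\na \peps z_\eps|$ of order $\la^{-1}$, producing an $O(\la^{-2})$ contribution that is absorbed in the error bound.

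On the bubble region, in coordinates $\tilde v_\eps = \omlrt + \jlar$ with $\omlrt := \bbeps \circ \bl_{\la_\eps} \in N$ and $|\jlar| \lesssim \la^{-1}$. Smoothness of $\pi_N$ near $N$, together with the facts that $\jlar(x) \in T_{\bbeps(p^*)}N$ and $\omlrt(x) \to \bbeps(p^*)$ away from $x=0$, yields
\[ \tilde z_\eps = \pi_N(\omlrt + \jlar) = \omlrt + \jlar + R_\eps \]
with $|R_\eps|, |\na R_\eps| = O(\la^{-2})$, since both the tangential projection correction $P_{\omlrt}\jlar - \jlar$ and the second-order $O(|\jlar|^2)$ contribution in $\pi_N$ are of this size. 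Substituting gives
\[ \tfrac{1}{2}|\na \tilde z_\eps|^2 = \tfrac{1}{2}|\na \omlrt|^2 + \na\omlrt \cdot \na \jlar + \tfrac{1}{2}|\na \jlar|^2 + \text{(lower order)}, \]
where the last term and the remainder contribute $O(\la^{-2})$ after differentiation in $\eps$. For the first term, conformal invariance of the 2D Dirichlet energy gives
\[ \tfrac{1}{2}\int_{\DD_{\frac{r_0}{2}}} |\na \omlrt|^2 \dd x = E(\bbeps) - E(\bbeps, C_\eps), \]
for $C_\eps = S^2 \setminus \bl_{\la_\eps}(\DD_{\frac{r_0}{2}})$ a geodesic cap of diameter $O(\la^{-1})$; both the cap energy and its $\eps$-derivative are $O(\la^{-2}(1 + \|\peps\bbeps\|_{C^1}))$, so this piece contributes $\peps E(\bbeps)$ modulo the error.

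The main work is extracting the stated distinguished term from $\peps \int_{\DD_{\frac{r_0}{2}}} \na \omlrt \cdot \na \jlar \dd x$ at $\eps = 0$. Using the harmonicity $\Delta \jla = 0$ (which follows from $J_a$ being harmonic in its first argument, hence inherited by $\na_y J_a(\cdot, 0)$), Green's identity, and separation into $\dot\la$- and $\dot\om$-contributions, one isolates $\dot\la \int_{\DD_{\frac{r_0}{2}}} \jla \cdot \Delta \pla \omlt \dd x$ as the main contribution. The remaining $\dot\om$-pieces and tangential projection corrections are controlled using the decomposition $\Delta\omlt = \tfrac{1}{2}|\na \bl_\la|^2 \tauS(\om)\circ\bl_\la - A(\omlt)(\na\omlt, \na\omlt)$, producing error terms of order $O(\la^{-2}(\|\peps \bbeps\|_{C^2} + \|\tauS(\om)\|_{C^1}))$.

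The main obstacle I anticipate is the bookkeeping of boundary integrals on $\partial\DD_{\frac{r_0}{2}}$, each individually of size $O(\la^{-3})$, which must combine with the higher-order corrections to $\pi_N$ to yield exactly the advertised formula with correct sign and constant. The cancellation $\int_{\partial\DD_{\frac{r_0}{2}}} \partial_\nu \jla \dd \sigma = 0$ (coming from $\int \Delta \jla = 0$) together with the asymptotic expansion $\pla \omlt = O(\la^{-2})$ for $|x| \geq c > 0$ are essential in keeping these boundary contributions within the stated error bound.
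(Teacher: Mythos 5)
Your plan to compute $\ddepsz E(z_\eps)$ via $\int_\Si \na z\cdot\na\peps z_\eps$ and split the domain into $B_{\tilde\iota}(a)$ and its complement \emph{before} integrating by parts leads to terms that are too large to be absorbed in the stated error. On $\Si\setminus B_{\tilde\iota}(a)$, both $\na z$ and $\na\peps z_\eps$ are of order $\la^{-1}$ even for pure $\la$-variations (since $\abs{\peps\la}$ is allowed to be of order $\la$), so the outside contribution is genuinely $O(\la^{-2})$ — the same order as the distinguished term $\tfrac{d\la}{d\eps}\int j\Delta\partial_\la\omlrt$ — and it is \emph{not} absorbed by an error bound that reduces to $O(\la^{-3})$ when $\peps\om^{(\eps)}=0$. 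By the same token the boundary integrals on $\partial\DD_{r_0/2}$ that you produce by integrating by parts inside are of size $\abs{\peps z}\cdot\abs{\partial_\nu z}\sim \la^{-1}\cdot\la^{-1}=O(\la^{-2})$, not $O(\la^{-3})$ as you claim, and $\peps E(\bbeps,C_\eps)$ is likewise $O(\la^{-2})$. These several $O(\la^{-2})$ pieces must cancel against each other to leave only $\tfrac{d\la}{d\eps}\int j\Delta\partial_\la\omlrt+\ddepsz E(\bbeps)+O(\la^{-3})$, and this cancellation is the whole content of the lemma; you have not exhibited it. A secondary issue: $\abs{\na R_\eps}$ is not $O(\la^{-2})$ uniformly, since $\na(P^\perp_\omlrt j)$ involves $\na P^\perp_\omlrt=O(\rho_\la)$, which near $\abs{x}\sim\la^{-1}$ makes $\na(P^\perp_\omlrt j)=O(\la^{-1})$.

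The paper sidesteps all of this by passing to the tension formulation $\ddepsz E(z_\eps)=-\int_\Si\peps z\cdot\tau_g(z)\dd v_g$ \emph{before} splitting. The key fact, which your outline never uses, is that $\Delta_g v=0$ on $\Si\setminus B_{\iota}(a)$ — the Green's function derivatives in \eqref{def:v-away} are harmonic there — so $\tau_g(z)=P_z(d^2\pi_N(v)(\na v,\na v))=O(\la^{-2})$ outside, and multiplying by $\abs{\peps z}\leqs\la^{-1}+\etaz$ immediately gives an outside contribution of $O(\la^{-3}+\etaz\la^{-2})$ with no boundary terms at all (one integrates by parts over the closed surface $\Si$). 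All subsequent integrations by parts inside $\DD_{r_0/2}$ then produce only $O(\la^{-3})$ boundary terms because they involve the combination $P^\perp_\omla j=(P^\perp_\omla-P^\perp_{\om(p^*)})j=O(\la^{-2})$ rather than the full $j=O(\la^{-1})$. You should restructure the argument around the tension formulation and exploit $\Delta_g v=0$ away from the concentration point; alternatively you would have to track the cancellation between the $O(\la^{-2})$ outside contribution, the $O(\la^{-2})$ boundary integrals and the $O(\la^{-2})$ cap-energy variation explicitly, which is considerably more delicate.
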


\begin{proof}[Proof of Lemma \ref{lemma:main-term}]
Let $z =z_{\la}^{a,\bb}
\in \ZZ$ and let $\zeps=z_{\leps}^{a,\bbeps}$ be a variation for which \eqref{ass:var} holds. To lighten the notation  we write for short 
$v=v_{\la}^{a,\bb}$, $j=j_{\la}^{a,\bb}$
 and denote the corresponding variations by 
$\peps v:=\ddepsz v_{\leps}^{a,\bbeps}$, $\peps\jeps:=\ddepsz j_{\leps}^{a,\bbeps}$ and $\peps \tilde\om_\la= \ddepsz (\omega^{(\eps)}\circ \pi_{\leps})$. 

We first remark that away from the ball $B_\iota(a)$ we have $\Delta_g v=0$ 
as the derivatives of the Green's function are harmonic functions. 
Combined with the estimate \eqref{est:elart}  on the error term in \eqref{eq:writing-v-lart} and with  $\tau_g(z)=P_z (\Delta_g z)=P_z(d\pi_N(v)(\Delta_g v)+d^2\pi_N(v)(\na v,\na v))$ 
we hence get that 
\beq \label{est:v-Delta-away}
\abs{\Delta_g v}+\abs{\peps \Delta_g v}+\abs{\na v}^2+ \abs{\peps \na v}^2
+\abs{\tau_g(z)}+\abs{\peps \tau_g(z)}
\leqs\la^{-2}  
\text{ on }\Sigma\setminus B_{\tilde \iota}(a).
\eeq

We also note that \eqref{est:v-away} yields a bound of
$\abs{\peps z} \leqs \la^{-1} +\etaz$ on this set, 
where here and in the following we write for short $\etaz:= \abs{\peps \bbeps(p^*)}$.
We thus obtain that  
\beqa \label{est:est1}
\ddepsz E(z)&=-\int_\Si \peps z \cdot \tau_g(z) \dd v_g 
=-\int_{B_{\tilde \iota}(a)}\peps z \cdot \Delta_g z \dd v_g+O(\la^{-3}+\etaz\la^{-2} )\\
&=-\int_{\DD_{\frac{r_0}{2}}} \peps \tilde z \cdot \Delta \tilde z dx +O(\la^{-3}+\etaz\la^{-2} )
\eeqa
for $\tilde z=z\circ F_a^{-1}$. Here and in the following we can 
carry out all computations on $\DD_{\frac{r_0}{2}}$ with respect to the Euclidean metric as the above integral is conformally invariant. 
 On this set we can write $\tilde z=\pi_N(\omlrt+j)$ as 
\beq \label{eq:repr-zla-error}
\tilde z =\omlrt+P_{\omlrt}(j)+E= \omlrt+j-P^\perp_{\omlrt}(j)+E
\eeq
where the lower order error term 
\beqa
E
&=\int_0^1\tfrac{d}{dt}\pi_N(\omlrt+tj) \dd t-P_\omlrt(j)=\int_0^1 (d\pi_N(\omlrt+tj)-d\pi_N(\omlrt))(j)\dd t
\eeqa
satisfies the estimates 
\beqa
\label{est:Ela-new}
\abs{E}+ \abs{\peps E}\leqs \la^{-2}\abs{x}^2, \, \abs{\na E}+\abs{\peps \na E}\leqs \la^{-2}\abs{x}, \, \abs{\Delta E}+\abs{\peps \Delta E}\leqs \la^{-2}  .
\eeqa
Here and in the following we use that 
\beq
\label{est:various-bounds}
\abs{j}+\abs{\peps  j}\leqs \la^{-1}\abs{x}, \quad \abs{\peps \na j}\leqs  \la^{-1} , \quad \abs{\peps \na \pi_\la}\leqs \rho_\la \text{ and } \rho_\la\abs{x}\leq 1
\eeq 
while
\beq
\label{est:var-om-eta} 
\abs{\partial_\eps\omla}\leqs \frac{(\abs{\peps \la}+\la)\abs{x}}{1+\la^2\abs{x}^2}+\eta_0 \leqs (1+\la\abs{x})^{-1}+\eta_0 \text{ and } \abs{\peps\na\omla}\leqs \rho_z.
\eeq
In the following it will also be useful to note that this implies that
\beq
\label{est:pepsz}
\abs{\partial_\eps \tilde z}\leqs (1+\la\abs{x})^{-1}+\eta_0, 
\eeq
that we can trivially bound 
\beq
\label{est:Delta-om-trivial}
 \abs{\Delta \omla}+ 
\abs{\peps \Delta \omla}
\leqs
\big[\norm{\om}_{C^2(S^2)}+\norm{\peps \bbeps}_{C^2(S^2)}\big]
\big[\abs{\na \pi_\la}^2 +\abs{\peps \la}\abs{\na\pi_\la}\abs{\partial_\la \na \pi_\la}\big]  \leqs \rho_z^2,
  \eeq
  and that we have estimates of   
  \beq
  \label{est:norms-z}
\norm{\rho_\la\abs{x}+(1+\la\abs{x})^{-1}
}_{L^2(\DDr)}\leqs \la^{-1}(\log\la)^\half, \quad 
\norm{\rho_\la\abs{x}+(1+\la\abs{x})^{-1}}_{L^1(\DDr)}
\leqs \la^{-1} .
\eeq 
From \eqref{est:est1}, \eqref{eq:repr-zla-error} and $\Delta_g j=0$ we thus obtain that 
\beqa
\label{est:est1-2}
\ddepsz E(z)&=
-\int_{\DD_{\frac{r_0}{2}}}
\partial_\eps \omlrt\Delta(\omlrt-P_{\omlrt}^\perp j)+\peps(P_{\omlrt}j)
\Delta\omlrt +\text{err}_1+O(\la^{-3}+\etaz\la^{-2})
\eeqa
for an error term that is bounded by 
\beqas
\abs{\text{err}_1}&\leqs 
\int_\DDr\abs{\peps(P_{\omlrt}j)}
\abs{\Delta  P_{\omlrt}^\perp j} +\abs{\partial_\eps E}(\abs{\Delta  \omlrt}+\abs{\Delta  P_{\omlrt}^\perp j})+\abs{\Delta  E}\abs{\partial_\eps \zeps}\\
&\leqs \la^{-2} \int_ {\DD_{\frac{r_0}{2}}}\rho_\la^2 \abs{x}^2+\rho_\la \abs{x}+ \la^{-2} \etaz +\la^{-2} \norm{( 1+\la\abs{x})^{-1}}_{L^1(\DDr)} \leqs  \la^{-3}+ \eta_0\la^{-2}.
\eeqas
We then note that  
\beqa\label{est:rewrite-1-2}
-\int_{\DD_{\frac{r_0}{2}}}\peps \omla\Delta  \omla= 
-\int_{\R^2}\peps \omla\tau (\omla)+\text{err}_2=
\ddeps E(\bbeps)+\text{err}_2
\eeqa
where $\text{err}_2= \int_{\R^2\setminus \DD_{\frac{r_0}{2}}}\peps \omla  \tau (\omla)$ is also bounded by
\beqa
\abs{\text{err}_{2}}
\leqs \norm{\peps \omla}_{L^\infty(\R^2\setminus \DD_{\frac{r_0}{2}} )}
\norm{\tauS(\om)}_{L^\infty(S^2)} \int_{\R^2\setminus \DD_{r_0/2}}\abs{\na \pi_\la}^2 =O(\la^{-3}+\etaz \la^{-2}).
\eeqa 
Here and in the following we use that $\pi_\la$ is conformal and hence  
\beq
\label{eq:trafo-tension-la}
\tau (\omla)=\thalf \abs{\na \pi_\la} ^2\cdot \tau_{g_{S^2}}(\om) \circ \pi_\la.
\eeq
Rewriting $$\peps(P_{\omlrt}j)\cdot \Delta \omla=\peps(P_{\omla}j\cdot \Delta  \omla)-P_{\omlrt}j \cdot \peps\Delta \omla =\peps(j\cdot\tau (\omla))-P_{\omlrt}j \cdot \peps\Delta \omla$$ 
and setting  $\text{err}_3:=-\int_{\DD_{\frac{r_0}{2}}}  \peps(j\tau (\omla))$
we hence obtain from \eqref{est:est1-2} and \eqref{est:rewrite-1-2} that 
\beqas
\ddepsz E(\zeps)&=\ddepsz E(\bbeps)
+\int_{\DD_{\frac{r_0}{2}}}P_{\omlrt}j\cdot \peps\Delta \omla
+\Delta (P_{\omlrt}^\perp j)\cdot \partial_\eps\omla +\text{err}_3
+O(\la^{-3}+\etaz\la^{-2})\\
&=\ddepsz E(\bbeps)
+\int_{\DD_{\frac{r_0}{2}}}j \cdot \peps\Delta \omla
+\text{err}_3+\text{err}_4+
O(\la^{-3}+\etaz\la^{-2})
\eeqas
where we integrate by parts in the second step. We can use \eqref{est:var-om-eta} as well as that 
$j\in T_{\om(p^*)}N$
to estimate the resulting the boundary term by 
\beqas 
\abs{\text{err}_4}&\leq 
\norm{\partial_\eps \omla}_{C^1(\partial \DD_{\frac{r_0}{2}})}
\norm{(P^{\perp}_{\omlrt}-P^\perp_{\om(p^*)})(j)}_{C^1(\partial \DD_{\frac{r_0}{2}})}
=O(\la^{-3}+\etaz\la^{-2}),
\eeqas
while combining \eqref{eq:trafo-tension-la} with 
\eqref{est:various-bounds} and \eqref{est:norms-z} allows us to estimate
\beqa
\abs{\text{err}_3}&\leqs \la^{-1} \big[\norm{\tauS(\om)}_{C^1(S^2)}+ \norm{\peps \bbeps}_{C^2(S^2)} \big] \norm{\abs{x} \rho_z^2}_{L^1(\DDr)}\\
&\leqs \la^{-2}  (\norm{\tauS(\om)}_{C^1(S^2)}+ \norm{\peps \bbeps}_{C^2(S^2)}).
\eeqa
We finally remark that 
$\int_{\DD_{\frac{r_0}{2}}}j \peps\Delta \omla=\frac{d\la}{d\eps} \int_{\DD_{\frac{r_0}{2}}} j \partial_\la\Delta \omla+\text{err}_5$
for 
$$\abs{\text{err}_5}\leq \int_{\DD_{\frac{r_0}{2}}}\abs{j} \abs{\Delta ( (\peps \bbeps)\circ \pi_\la)} \leqs \la^{-1} \norm{\peps\bbeps}_{C^2} \norm{\abs{x} \rho_z^2}_{L^1(\DDr)}\leqs \la^{-2} \norm{\peps\bbeps}_{C^2}.$$
Altogether this yields the claim of Lemma \ref{lemma:main-term}.
\end{proof}

We now show that the integral $\int j\partial_\la \Delta \om_\la$ appearing in \eqref{eq:expansion-E-general} has a given sign and scaling in $\la$ and indeed essentially only depends on $a\in \Si$, $\la$ and $\abs{d\om(p^*)}$.

To state this in detail 
we first note that as
$d\bbs(p^*)\neq 0$ we 
can always assume that 
$\si_1>0$ is chosen small enough to ensure that
\beq
\label{ass:alpha}
\abs{d \bb(p^*)}\geq \half \abs{d \bbs(p^*)}>0\text{ for all } \bb\in \HH_1^{\si_1}(\bbs).\eeq
Writing for short
$\al_\bb:=\frac{1}{\sqrt{2}}\abs{d \bb (p^*)}_{g_{S^2}} $
we then note that 
if $\om$ is harmonic then
the vectors $\{\al_\bb^{-1}\na_{e_1} \bb(p^*),\al_\bb^{-1} \na_{e_2} \bb(p^*)\}$ are orthonormal since harmonic maps from $S^2$ are weakly conformal. 
 While this is not true for 
general elements $\om \in \HHone$  in the non-integrable case, the above will still hold up to a small error as elements of $\HHone$ are $C^k$ close to the harmonic map $\bbs$. 

So given any number $\etanew>0$ we can assume that $\si_1>0$ is chosen small enough so that 
for any $\bb\in \HHone$
there exists a matrix $S_\bb\in M(n)$ with
\beq
\label{def:Som}
d\bb(p^*)(e_i)=\al_\bb S_\bb e_i\in \R^n, \quad i=1,2, \quad \text{ and } \abs{S_\bb^TS_\bb-\Id}\leq \etanew.
\eeq
Here we denote by $\{e_i\}$ both the standard basis of $\R^3$ and of $\R^n$ as appropriate. We note that as $S_\bb$ will only be applied to elements of $\R^2\times \{0\}\subset \R^n$ in the construction below the particular choice of $S_\bb$ in the other directions is irrelevant. 
We can then prove

\begin{lemma}
\label{lemma:main-term-scaling}
For any $\om\in \HHone$, $\la\geq \la_1$ and $a\in \Si$ we have 
$$
\int_{\DD_{\frac{r_0}{2}}} j_\la^{a,\om} \Delta_g \partial_\la \omlrt \dd v_g
=4\pi \abs{d\om(p^*)}^2\mathcal{J}(a)\la^{-3}+O(\la^{-4})+O(\abs{S_\om^TS_\om-\Id}\la^{-3})$$
for $S_\om$ as in \eqref{def:Som} and 
$
\mathcal{J}(a):=\lim_{x\to 0} (\partial_{y_1}\partial_{x_1}+\partial_{y_2}\partial_{x_2}) G_a(x,0),$ where 
 $G_a(x,y)=G(F_a^{-1}(x),F_a^{-1}(y))$ is the function that represents the Green's function in the coordinates $F_a$ introduced in Remark \ref{rmk:Fa}.
\end{lemma}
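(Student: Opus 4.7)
The plan is to rescale the integral to all of $\R^2$ and then integrate by parts twice. Setting $u(y):=\om(\pi(y))$ so that $\omlrt(x)=u(\la x)$, a direct chain-rule computation yields the identity
\beqs
\Delta_x\,\partial_\la\omlrt(x)\;=\;\la\,\Div_y(y\,\Delta u(y))\big|_{y=\la x}.
\eeqs
Since the metric is conformally Euclidean on $\DDr$ in our coordinates, $\int_{\DDr}j_\la^{a,\om}\Delta_g\partial_\la\omlrt\,dv_g=\int_{\DDr}j_\la^{a,\om}\Delta\partial_\la\omlrt\,dx$ for $\Delta=\Delta_{\R^2}$. The matrix $M:=\partial_x\nabla_y J_a(0,0)$ is symmetric with $\tr(M)=\mathcal{J}(a)$ (the singular $-\log|x-y|$ part of $G_a$ contributes zero by direct computation), so the Taylor expansion $\nabla_y J_a(x,0)-\nabla_y J_a(0,0)=Mx+O(|x|^2)$ combined with the substitution $y=\la x$ reduces the problem, up to an $O(\la^{-4})$ remainder, to evaluating
\beqs
\mathcal{I}\;:=\;\int_{\R^2} V(y)\cdot\Div_y(y\,\Delta u(y))\,dy,\qquad V(y):=d\om(p^*)(My,0)^T\in\R^n.
\eeqs
The errors from restricting to $|y|\le\la r_0/2$ and from the quadratic Taylor remainder in $j$ each contribute $O(\la^{-4})$, thanks to the far-field bound $|\Div_y(y\,\Delta u)|=O(|y|^{-4})$, which follows from the conformal identity $\Delta u=\tfrac12|\nabla\pi|^2(\tauS(\om)\circ\pi)-A(u)(\nabla u,\nabla u)$ together with $|\nabla u|=O(|y|^{-2})$.

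I evaluate $\mathcal{I}$ by integrating by parts twice. Since $V$ is homogeneous of degree one in $y$, the first integration by parts gives $\mathcal{I}=-\int V\cdot\Delta u\,dy$ (the resulting boundary contribution at infinity is $O(R^{-1})$). For the second step, Green's identity on $B_R$ reads $\int_{B_R}V\cdot\Delta u\,dy=\int_{\partial B_R}V\cdot\partial_\nu u\,d\sigma-\int_{B_R}\nabla V\cdot\nabla u\,dy$, and now the boundary term does \emph{not} vanish as $R\to\infty$: since $|V||\partial_\nu u|=O(R^{-1})$ on a curve of length $2\pi R$, it produces an $O(1)$ limit. Using the asymptotic $u(y)=\om(p^*)+d\om(p^*)(2y/|y|^2,0)^T+O(|y|^{-2})$ to compute $\partial_\nu u=-\tfrac{2}{R^2}d\om(p^*)(\nu,0)^T+O(R^{-3})$ on $\partial B_R$, together with the elementary identity $\int_0^{2\pi}\nu_i\nu_j\,d\theta=\pi\delta_{ij}$, gives
\beqs
\lim_{R\to\infty}\int_{\partial B_R}V\cdot\partial_\nu u\,d\sigma\;=\;-2\pi\,\tr(D^TDM),\qquad D:=d\om(p^*)\in\R^{n\times 2}.
\eeqs
The same expansion yields $\lim_R\int_{B_R}\partial_{y_k}u_i\,dy=\lim_R\int_{\partial B_R}u_i\nu_k\,d\sigma=2\pi D_{ik}$, hence $\int\nabla V\cdot\nabla u\,dy=2\pi\,\tr(D^TDM)$, so $\int V\cdot\Delta u\,dy=-4\pi\,\tr(D^TDM)$ and $\mathcal{I}=4\pi\,\tr(D^TDM)$.

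It remains to rewrite $\tr(D^TDM)$ in the form of the claim. By \eqref{def:Som}, $D$ equals $\alpha_\om$ times the first two columns of $S_\om$, so $D^TD$ equals $\alpha_\om^2$ times the upper-left $2\times 2$ block of $S_\om^TS_\om$, which is $\alpha_\om^2\Id_2+O(\alpha_\om^2|S_\om^TS_\om-\Id|)$; this gives $\tr(D^TDM)=\alpha_\om^2\,\mathcal{J}(a)+O(|S_\om^TS_\om-\Id|)$. Combining with the prefactor $\tfrac{2}{\la^3}$ from the rescaling step and with $|d\om(p^*)|^2=2\alpha_\om^2$ produces the claimed asymptotic formula. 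The main obstacle is the second integration by parts: because $V$ grows linearly while $u-\om(p^*)$ decays only like $|y|^{-1}$, the boundary contribution at $\infty$ is of the same order as, and exactly doubles, the interior contribution — this is precisely what produces the coefficient $4\pi$ rather than a naive $2\pi$ — and alongside this bookkeeping every truncation, Taylor, and $\om$-non-conformality error must be verified to sit inside the $O(\la^{-4})$ and $O(|S_\om^TS_\om-\Id|\la^{-3})$ remainders.
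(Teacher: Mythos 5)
Your approach is genuinely different from the paper's and reaches the correct formula, including the coefficient $4\pi$, the identification $\operatorname{tr}(D^TDM)=\alpha_\omega^2\mathcal{J}(a)+O(|S_\omega^TS_\omega-\Id|)$, and the $O(\lambda^{-4})$ truncation error. The paper instead uses the harmonicity of $j$ to integrate by parts twice and reduce $\int j\,\Delta\partial_\lambda\tilde\omega_\lambda$ to boundary integrals on $\partial\DD_{r_0/2}$; there it replaces $\partial_\lambda\tilde\omega_\lambda$ by the explicit conformal model $\alpha_\omega S_\omega\partial_\lambda(\bar\pi_\lambda,0)^T$, gaining a factor $\lambda^{-1}$ from $|d\omega(\pi_\lambda)-d\omega(p^*)|$ to produce the $O(\lambda^{-4})$ error, and then evaluates the model integral using the closed form $\Delta\bar\pi_\lambda=-|\nabla\pi_\lambda|^2\bar\pi_\lambda$ (for which $\Div_y(y\Delta\bar\pi)$ decays like $|y|^{-5}$, so the Taylor-remainder error is $O(\lambda^{-6})$, comfortably inside the budget). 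Your ``residue at infinity'' computation of $\mathcal{I}$ --- noticing that the first IBP boundary term vanishes, that the second one is $O(1)$ and equals exactly half of $\mathcal{I}$, and that $\int_{B_R}\partial_i u_j\to 2\pi D_{ji}$ --- is correct and is a genuinely different route that avoids the explicit formula for $\Delta\bar\pi_\lambda$.

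There is, however, a gap in the error bookkeeping for the Taylor remainder of $j$. You attribute the $O(\lambda^{-4})$ bound to the far-field decay $|\Div_y(y\Delta u)|=O(|y|^{-4})$, but with only this input one gets $\lambda^{-1}\int_{|y|\le\lambda r_0/2}O(\lambda^{-3}|y|^2)\,O((1+|y|)^{-4})\,dy=O(\lambda^{-4}\log\lambda)$, since $|y|^2\,|\Div_y(y\Delta u)|\sim|y|^{-2}$ just fails to be integrable. Note also that $\Div_y(y\Delta u)$ really does decay no faster than $|y|^{-4}$ in general: writing $u(y)=f(y/|y|^2)$ one finds $|y|^4\Div_y(y\Delta u)\to -2\Delta_{\tilde y}f(0)$, and $\Delta_{\tilde y}f(0)=4\tau_{g_{S^2}}(\omega)(p^*)-A(\omega(p^*))(\nabla f(0),\nabla f(0))$ is typically nonzero. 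To recover the claimed $O(\lambda^{-4})$ one must use that $\nabla_y J_a(\cdot,0)$ is harmonic in $x$ --- which holds because $\Delta_x J_a(x,y)$ is independent of $y$ --- so the degree-two Taylor coefficient of $\hat j_a$ is trace-free; the potentially divergent $\int_{B_R}(y_1^2+y_2^2)\Div_y(y\Delta u)\,dy$ never appears, and the harmonic polynomials $y_1^2-y_2^2$, $y_1y_2$ paired against $\Div_y(y\Delta u)$ give convergent integrals. The cubic-and-higher part of the remainder then contributes $\lambda^{-1}\cdot\lambda^{-2}\int O(|y|^3/\lambda^3)\,O((1+|y|)^{-4})\,dy=O(\lambda^{-5}\cdot\lambda)=O(\lambda^{-4})$. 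Alternatively, you could integrate by parts first using $\Delta j=0$ as the paper does, which sidesteps the issue entirely.
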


\begin{rmk}
\label{rmk:J_neg}
We recall from \cite{MRS} that the function $\mathcal{J}$, which depends only on the domain surface $(\Si,g)$, is strictly negative on any surface of positive genus. 
\end{rmk}

\begin{proof}[Proof of Lemma \ref{lemma:main-term-scaling}]
Extending $\om:S^2\to N\hookrightarrow \R^n$ to a neighbourhood of $S^2$ by setting $\om(x)=\om(\abs{x}^{-1}x)$ we can view $d\om(p^*)$ as a map from $\R^3$ to $\R^n$ with $d\om(p^*)(e_3)=0$. 
Thus \eqref{def:Som} allows us to write  
$d\om(p^*)(\partial_\la \pi_\la)= \al_\bb S_\bb (\partial_\la \bar \pi_\la, 0_{n-2})^T,$
where $\bar \pi_\la=(\pi_\la^1,\pi_\lambda^2):\R^2\to \R^2$ is given by the first two components of the rescaled inverse stereographic projection $\pi_\la$. 
We can use this to estimate 
\beqas 
\norm{\partial_\la(\omlrt-\al_\bb S_\bb(\bar \pi_\la,0)^T)}_{C^1(\partial \DD_{\frac{r_0}{2}})}
&
= \norm{[d\om(\pi_\la)-d\om(p^*)](\partial_\la \pi_\la)} 
_{C^1(\partial \DD_{\frac{r_0}{2}})}\\
&\leqs \norm{\om}_{C^2(S^2)} \norm{\pi_\la-p^*}_{C^1(\partial \DD_{\frac{r_0}{2}})}\norm{\partial_\la \pi_\la}_{C^1(\partial \DD_{\frac{r_0}{2}})}\leqs \la^{-3}.
\eeqas 
Integration by parts, using also 
$\norm{j_\la^{a,\om}}_{C^1(\DD_{\frac{r_0}{2}})}=O(\la^{-1})$ 
and $\Delta_g j_\la^{a,\om}=0$, thus gives
\beqas
\int_{\DD_{\frac{r_0}{2}}} j_\la^{a,\om}\Delta \partial_\la \omlrt 
&=\int_{\DD_{\frac{r_0}{2}}}j_\la^{a,\om}\Delta(\al_\bb S_\bb\partial_\la(\bar \pi_\la,0)^T) 
+O(\la^{-4})\\
&=\al_\bb \int_{\DD_{\frac{r_0}{2}}} (S_\om^T j_\la^{a,\om})\cdot (\Delta \partial_\la\bar \pi_\la,0)^T
+O(\la^{-4}).
\eeqas
Writing for short $\hat j_a:=2 \na_y J_a(\cdot,0)-2 \na_y J_a(0,0):\DD_r\to \R^2$
we now recall that 
$j_\la^{a,\om} $ is given by  $j_\la^{a,\om} = \la^{-1} d\om(p^*) (\hat j_a,0)^T=
\la^{-1}\al_\om S_\om (\hat j_a,0)^T$. 
As 
$\la^{-1}\int\abs{\hat j_a}\abs{\Delta \partial_\la \pi_\la}\leqs \la^{-2}\int\abs{x}\rho_\la^2\leqs \la^{-3}$ we thus obtain that 
\beqa \label{est:12345}
\int_{\DD_{\frac{r_0}{2}}} j_\la^{a,\om}\Delta \partial_\la \omlrt \dd x
&=\al_\om^2\la^{-1} \int_{\DD_{\frac{r_0}{2}}}  \hat j_a \Delta \partial_\la\bar \pi_\la \dd x
+O(\la^{-4}+\abs{S_\om^TS_\om-\Id} \la^{-3}).\\
\eeqa
Up to the factor $\al_\bb^2$ and the constant shift in $\hat j_a$,  the leading order term in \eqref{est:12345} is exactly the same as the leading order term obtained in the proof of Lemma 3.7 of \cite{MRS}. Following the argument there we can thus combine the Taylor expansion of $\hat j_a$ with the symmetries of $\Delta \bar \pi_\la=-\abs{\na \pi_\la}^2 \bar \pi_\la=\frac{-16\la x}{(1+\la^2\abs{x}^2)^3}$ 
to compute
\beqas
\al_\om^2\la^{-1} \int_{\DD_{\frac{r_0}{2}}}  \hat j_a \Delta \partial_\la\bar \pi_\la 
&=-\al_\bb^2\la^{-1} \sum_{i=1,2}2 \partial_{x_i}\partial_{y_i}J_a(0)\int_{\DD_{\frac{r_0}{2}}} x_i \partial_\la 
(\tfrac{16\la x_i}{(1+\la^2\abs{x}^2)^3}) +O(\la^{-4})
\\
&=\al_\bb^2\tfrac{8\pi}{\la^3} (\partial_{x_1}\partial_{y_1}J_a(0)+\partial_{x_2}\partial_{y_2}J_a(0))+O(\la^{-4})\\
&
=\al_\bb^2\tfrac{8\pi}{\la^3} \mathcal J(a)+O(\la^{-4}).
\eeqas
Inserting this into \eqref{est:12345} and using that $\al_\om^2=\half \abs{d\om(p^*)}^2$ gives the claim of the lemma. 
\end{proof}

We first use the above lemmas to control the variation of the energy induced by a change 
of the bubble parameter. To this end we note 
that given any $\eta_2>0$ 
we 
can choose $\si_1>0$ sufficiently small to ensure that 
  \beq
\label{ass:tens}
\norm{\tauS(\om)}_{C^1(S^2)}\leq \eta_2 \text{ for all } \om\in \HHone
\eeq
since 
$\bbs$ is harmonic.
For suitable choices of $\si_1$ and $\la_1$ we can thus combine 
Lemmas \ref{lemma:main-term} and \ref{lemma:main-term-scaling} with Remark \ref{rmk:J_neg} to obtain
\begin{cor}
\label{cor:Expansion-scaling}
There exist constants $c_1>0$ and $C<\infty$ so that for any $z=z_{\la}^{a,\om}\in \ZZ$ 
\beq
\label{claim:dE-lamba}
C\la^{-2}\geq -\la \frac{d}{d\la} E(z_{\la}^{a,\om})\geq c_1\la^{-2}.
\eeq
\end{cor}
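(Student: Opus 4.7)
The plan is to apply Lemma \ref{lemma:main-term} to the one-parameter variation that changes only the bubble scale. Concretely, for $z = z_\la^{a,\om} \in \ZZ$, I set $z_\eps := z_{\la_\eps}^{a,\om}$ with $\la_\eps := \la e^\eps$ and $\om^{(\eps)} \equiv \om$. This satisfies \eqref{ass:var} since $\peps \la_\eps|_{\eps=0} = \la$, and the chain rule gives $\ddepsz E(z_\eps) = \la \tfrac{d}{d\la} E(z_\la^{a,\om})$, while $\ddepsz E(\om^{(\eps)}) = 0$. Using \eqref{eq:expansion-E-general} together with the error estimate \eqref{est:exp-general-error}, I obtain
\begin{equation*}
\la \tfrac{d}{d\la} E(z_\la^{a,\om}) = \la \int_{\DD_{r_0/2}} j_\la^{a,\om} \Delta \omlrt \dd x + O(\la^{-3}) + O\bigl(\la^{-2} \norm{\tauS(\om)}_{C^1(S^2)}\bigr),
\end{equation*}
where I have used $\peps \om^{(\eps)} = 0$ to drop the corresponding term in the error.

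The next step is to substitute the expansion from Lemma \ref{lemma:main-term-scaling} for the integral and multiply by $\la$, which identifies the leading order:
\begin{equation*}
-\la \tfrac{d}{d\la} E(z_\la^{a,\om}) = -4\pi |d\om(p^*)|^2 \mathcal{J}(a) \la^{-2} + O(\la^{-3}) + O\bigl((|S_\om^T S_\om - \Id| + \norm{\tauS(\om)}_{C^1(S^2)}) \la^{-2}\bigr).
\end{equation*}
To conclude, I combine three facts. First, by Remark \ref{rmk:J_neg} and continuity of $\mathcal{J}$ on the compact surface $\Si$, there exist $0 < \mu \leq M$ with $\mu \leq -\mathcal{J}(a) \leq M$ uniformly in $a$. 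Second, \eqref{ass:alpha} gives $\tfrac14 |d\bbs(p^*)|^2 \leq |d\om(p^*)|^2 \leq C$ for every $\om \in \HHone$. Third, the constants $\etanew$ controlling $|S_\om^T S_\om - \Id|$ via \eqref{def:Som}, and $\eta_2$ controlling $\norm{\tauS(\om)}_{C^1(S^2)}$ via \eqref{ass:tens}, can be made arbitrarily small by shrinking $\si_1$: this uses that $\bbs$ is harmonic (so $\tauS(\bbs) = 0$) and that harmonic maps from $S^2$ are weakly conformal (so $S_{\bbs}^T S_{\bbs} = \Id$), combined with the $C^k$-closeness of elements of $\HHone$ to $\bbs$ from Lemma \ref{lemma:Simon}.

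Putting everything together, the main term is bounded between $4\pi \mu |d\bbs(p^*)|^2 \la^{-2}/4$ and $4\pi M C \la^{-2}$, and by choosing $\si_1$ small enough that $\etanew + \eta_2$ is smaller than a suitable fraction of $\mu |d\bbs(p^*)|^2$, and $\la_1$ large enough that $C\la^{-3} \leq C\la_1^{-1} \la^{-2}$ is also negligible, the error terms are absorbed by half the main term. This yields \eqref{claim:dE-lamba} with $c_1 := \tfrac12 \pi \mu |d\bbs(p^*)|^2$ and some upper constant $C$. There is no serious obstacle here, as all of the analytic content has been carried out in Lemmas \ref{lemma:main-term} and \ref{lemma:main-term-scaling}; the only point that requires some care is verifying that the two smallness parameters $\etanew$ and $\eta_2$ can be made small \emph{simultaneously} as $\si_1 \to 0$, but this is immediate from the construction of $\HHone$ as a neighbourhood of the harmonic map $\bbs$ in a finite-dimensional manifold of $C^k$-maps.
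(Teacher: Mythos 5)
Your proof is correct and follows exactly the paper's own route: choose the variation that only rescales $\la$, apply Lemma \ref{lemma:main-term} (which drops the $\ddepsz E(\om^{(\eps)})$ term since $\om$ is fixed), substitute Lemma \ref{lemma:main-term-scaling}, and use Remark \ref{rmk:J_neg} together with \eqref{ass:alpha}, \eqref{def:Som} and \eqref{ass:tens} to control the sign and absorb the errors for $\si_1$ small and $\la_1$ large. One small typo: in your first displayed equation the integrand should read $j_\la^{a,\om}\,\Delta\,\partial_\la\omlrt$ (the $\partial_\la$ is missing), but your next step correctly applies Lemma \ref{lemma:main-term-scaling}, so this is only a slip of notation, not a gap.
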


\begin{rmk} \label{rmk:delta-energy-bubbles}
As an immediate consequence we obtain that 
\beq
\label{est:delta-energy-bubble}
\abs{E(z,\Si)-E(\omega,S^2)}\leq C\la(z)^{-2} \text{ for any } z\in\ZZ.
\eeq
\end{rmk}

In the non-integrable case we furthermore need to control the tension of the underlying map $\om$ if $\om\in\HHone$ is not harmonic. To this end we let $\om_0\in\HH_0^{\si_1}(\bbs)$ and $R\in SO(3)$ be so that $\om=\om_0\circ R$ and set $\om^{(\eps)}=\om_0^{(\eps)}\circ R$ for $\om_0^{(\eps)}$ as in Lemma \ref{lemma:Simon}. 
As such a variation satisfies \eqref{ass:var} and as \eqref{claim:tang-part-tension-below} ensures that 
$$\ddepsz E(\om^{(\eps)})=\ddepsz E(\om_0^{(\eps)})\geq \norm{\tau_{g_{S^2}}(\om_0)}_{L^2(S^2)}= 
\norm{\tau_{g_{S^2}}(\om)}_{L^2(S^2)},$$
Lemma \ref{lemma:main-term} immediately yields 

\begin{cor}\label{cor:expansion-tau}
There exist a constant $C<\infty$ so that for any $z=z_{\la}^{a,\om}
\in\ZZ$ for which $\om$ is in the interior of  $\HH_1^{\si_1}(\bbs)$ there exists a variation $\om^{(\eps)}$ of $\om$ in $\HH_1^{\si_1}(\bbs)$ satisfying \eqref{ass:var} so that 
\beq
\ddepsz E(z_{\la}^{a,\om^{(\eps)}})\geq \norm{\tauS(\om)}_{L^2(S^2)} -C\la^{-2}
\eeq
for the corresponding variation of adapted bubbles (with fixed $\la$ and $a$).
\end{cor}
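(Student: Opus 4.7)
The plan is to view the corollary as a direct packaging of Lemma~\ref{lemma:Simon} with Lemma~\ref{lemma:main-term}, applied to a variation in which only the underlying map $\om$ moves and $\la, a$ are held fixed.

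First I would use that $\om$ lies in the interior of $\HH_1^{\si_1}(\bbs)$ to write $\om=\om_0\circ R$ with $\om_0\in\HH_0^{\si_1}(\bbs)$ and $R\in SO(3)$, where $\om_0$ is in the interior of $\HH_0^{\si_1}(\bbs)$. Lemma~\ref{lemma:Simon} then supplies a variation $(\om_0^{(\eps)})\subset\HHz(\bbs)$ with $\om_0^{(0)}=\om_0$, $\norm{\peps\om_0^{(\eps)}}_{C^2(S^2)}\leq C$, and $\ddepsz E(\om_0^{(\eps)})\geq \norm{\tauS(\om_0)}_{L^2(S^2)}$. I would then set $\om^{(\eps)}:=\om_0^{(\eps)}\circ R$, which remains inside $\HH_1^{\si_1}(\bbs)$ for $\eps$ small by definition \eqref{def:H1}. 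Because $R$ is an isometry of $(S^2,g_{S^2})$, precomposition with $R$ preserves $E$, the $C^2$-norm and the $L^2$-norm of the tension, so $\om^{(\eps)}$ satisfies \eqref{ass:var} (trivially in the $\la$-slot since $\la_\eps\equiv\la$) and still obeys
\[
\ddepsz E(\om^{(\eps)})\;\geq\;\norm{\tauS(\om)}_{L^2(S^2)}.
\]

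Next I would apply Lemma~\ref{lemma:main-term} to the induced variation $z_\eps=z_{\la}^{a,\om^{(\eps)}}$. Since $d\la/d\eps=0$, the scaling term in \eqref{eq:expansion-E-general} drops out and one is left with $\ddepsz E(z_\eps)=\ddepsz E(\om^{(\eps)})+\text{err}$, where by \eqref{est:exp-general-error}
\[
|\text{err}|\;\leq\;C\la^{-3}+C\la^{-2}\bigl[\norm{\peps\om^{(\eps)}}_{C^2(S^2)}+\norm{\tauS(\om)}_{C^1(S^2)}\bigr].
\]
The first factor in the bracket is $\leq C$ by Lemma~\ref{lemma:Simon}, and the second is bounded via \eqref{claim:Ck-equiv} (applied to $\om_0\in\HHz(\bbs)$) together with the fact that $\norm{\tauS(\om_0)}_{L^2(S^2)}$ is uniformly bounded on $\HH_0^{\si_1}(\bbs)$ (indeed small, since $\bbs$ itself is harmonic and $\si_1$ is chosen small, cf.\ \eqref{ass:tens}). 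Hence $|\text{err}|\leq C\la^{-2}$, and combining with the lower bound on $\ddepsz E(\om^{(\eps)})$ yields the claimed inequality.

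I do not anticipate a substantive obstacle here; the work has been front-loaded into Lemmas~\ref{lemma:Simon} and~\ref{lemma:main-term}. The only points that require genuine care are (i)~checking that passing from $\om_0$ to $\om=\om_0\circ R$ is completely harmless for $E$, $\norm{\peps\,\cdot\,}_{C^2}$, and $\norm{\tauS(\,\cdot\,)}_{L^2}$, which relies on $R$ being an isometry of $(S^2,g_{S^2})$; and (ii)~verifying that $\om^{(\eps)}$ indeed stays in $\HH_1^{\si_1}(\bbs)$ for small $\eps$, which uses that $\om$ is in the interior. Everything else is a direct substitution into the already-proved expansion formula.
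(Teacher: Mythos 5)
Your proposal is correct and follows essentially the same route as the paper: decompose $\om=\om_0\circ R$, pull the variation $\om_0^{(\eps)}$ from Lemma~\ref{lemma:Simon}, set $\om^{(\eps)}=\om_0^{(\eps)}\circ R$, invoke isometry-invariance of $E$ and $\tauS$ under $R$, and then substitute into Lemma~\ref{lemma:main-term} with $\peps\la_\eps=0$ so the leading integral term vanishes and the error is $O(\la^{-2})$. The paper states these steps more tersely but the argument is identical.
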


\subsection{Estimates on the tension and the second variation on $\ZZ$}\label{sec:tension} $ $\\
To prove our main result we furthermore need the following estimates on the scaling of the first and second variation of the energy at points of our adapted bubble set.

\begin{lemma}\label{lemma:tension}
For any $z=\zlar\in \ZZ$ and
$w\in \Gamma^{H^1}(z^*TN)$ with $\norm{w}_z=1$ we can bound 
\beq
\label{est:tension-times-w}
\abs{dE(z)(w)}\leq C\la^{-2}(\log\la)^{\half}+C\norm{\tauS(\om)}_{C^1(S^2)}, 
\eeq
while for all variations 
$z_\eps=z_{\la_\eps}^{a,\bbeps}$
 satisfying \eqref{ass:var} we have
 \beqa \label{est:second-var} 
\abs{d^2E(z)(\peps z ,w)}&\leq C\la^{-2}(\log\la)^\half  +C\norm{\tauS(\om)}_{C^1(S^2)}+C\la^{-1}\norm{\peps \bbeps}_{C^2(S^2)}\\
&+ 
C\norm{P_{\om}(\peps \tauS(\bbeps))}_{L^2(S^2)}.
\eeqa
\end{lemma}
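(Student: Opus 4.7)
The plan is to write $dE(z)(w) = -\int_\Sigma \tau_g(z)\cdot w\, dv_g$ (using that $w$ is tangent to $N$ along $z$) and to estimate $\tau_g(z)$ pointwise in two regions of $\Sigma$, then integrate against $|w|$ using the weighted inner product.

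First I would obtain a pointwise expansion of $\tau_g(z)$. On $\Sigma\setminus B_{\tilde\iota}(a)$ the bound \eqref{est:v-Delta-away} and the smoothness of $\pi_N$ already give $|\tau_g(z)|\lesssim\la^{-2}$. On $B_{\tilde\iota}(a)$, working in the local coordinates $x=F_a(p)$, we have $\tilde z=\pi_N(\omla+j_\la^{a,\om}+\elart)$ on $\DDr$. Using $\Delta j_\la^{a,\om}=0$ and Taylor-expanding $d\pi_N$ and $d^2\pi_N$ around $\omla\in N$, the leading term of $\tau_g(\tilde z)=P_{\tilde z}\Delta_g\tilde z$ is $P_{\omla}\Delta_g\omla=\tau(\omla)=\tfrac12|\na\pi_\la|^2\tauS(\om)\circ\pi_\la$, controlled pointwise by $C\rho_\la^2\|\tauS(\om)\|_{C^0}$. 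All remaining contributions carry an extra factor of $|j_\la^{a,\om}|$, $|\elart|$, $\la^{-1}|x|$, or $\la^{-2}$, and via \eqref{est:various-bounds} and \eqref{est:Ela-new} produce a pointwise remainder of size $C\la^{-2}$.

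To prove \eqref{est:tension-times-w} I would then integrate. The $\la^{-2}$ contribution yields $C\la^{-2}\|w\|_{L^1(\Sigma)}\leq C\la^{-2}(\log\la)^{1/2}$ by \eqref{est:Lp-by-z-norm}. For the leading bubble term, the weighted Cauchy--Schwarz
\[
\int_\Sigma\rho_\la^2|w|\, dv_g\leq \|\rho_z\|_{L^2(\Sigma)}\|\rho_z w\|_{L^2(\Sigma)}\leq C\|w\|_z=C
\]
(using $\|\rho_z\|_{L^2(\Sigma)}\leq C$, which follows from the change of variable $s=\la|x|$) gives the contribution $C\|\tauS(\om)\|_{C^0}\leq C\|\tauS(\om)\|_{C^1}$, completing \eqref{est:tension-times-w}.

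For \eqref{est:second-var} I would write $w=P_z\bar w$ for a fixed $\bar w\in\R^n$ and consider $\tfrac{d}{d\eps}\big|_{\eps=0}\int\tau_g(z_\eps)\cdot P_{z_\eps}\bar w\, dv_g$. Since $\peps P_{z_\eps}\bar w$ is normal to $T_zN$ while $\tau_g(z)$ is tangent, only the derivative of the tension survives and $d^2E(z)(\peps z,w)=-\int\peps\tau_g(z_\eps)\cdot w\, dv_g$. Differentiating the pointwise expansion of $\tau_g(z_\eps)$ in $\eps$, the derivative of $\tau(\omega^{(\eps)}\circ\pi_{\la_\eps})=\tfrac12|\na\pi_{\la_\eps}|^2\tauS(\om^{(\eps)})\circ\pi_{\la_\eps}$ splits into a piece proportional to $(\peps\tauS(\om^{(\eps)}))\circ\pi_\la$ and pieces in which $\peps\la$ hits $|\na\pi_\la|^2$ or the argument $\pi_{\la_\eps}$. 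After the conformal change of variable $y=\pi_\la(x)$ and a second weighted Cauchy--Schwarz, the first piece contributes $C\|P_\om(\peps\tauS(\om^{(\eps)}))\|_{L^2(S^2)}$ (the projection onto $T_\om N$ arising because $w$ is tangent to $N$ along $z\approx\omla$), while the remaining pieces are pointwise dominated by $C\rho_\la^2\|\tauS(\om)\|_{C^1}$ and contribute $C\|\tauS(\om)\|_{C^1}$. Differentiating the $O(\la^{-2})$ remainder using \eqref{ass:var} together with the explicit dependence of $\vlar$ on $\om$ (notably through $j_\la^{a,\om}=\tfrac2\la d\om(p^*)(\cdots)$) yields the $C\la^{-2}(\log\la)^{1/2}$ and $C\la^{-1}\|\peps\om^{(\eps)}\|_{C^2}$ terms in the claim.

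The hard part is the careful bookkeeping: each term arising in the pointwise expansion of $\tau_g(z)$ and in its $\eps$-derivative must be classified as either scaling like $\rho_\la^2$ (where Cauchy--Schwarz in the weighted norm yields a bounded $L^2$-contribution) or like a pure negative power of $\la$ (forcing the cruder $L^1$-estimate with its $(\log\la)^{1/2}$ loss). The cancellations $\Delta j_\la^{a,\om}=0$ and $j_\la^{a,\om}(0)=0$ are essential to prevent an unacceptable $\la^{-1}$ loss from invading the interior estimate.
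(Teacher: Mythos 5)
Your overall strategy matches the paper's: write $dE(z)(w)=-\int\tau_g(z)\cdot w\,dv_g$, expand $\tau_g(\tilde z)$ in local coordinates using $\tilde z=\pi_N(\omlrt+j+\elart)$ together with $\Delta j=0$, isolate the leading term $\tau(\omla)=\tfrac12|\na\pi_\la|^2\tauS(\om)\circ\pi_\la$, and integrate via conformal invariance, weighted Cauchy--Schwarz, and the $L^1$-bound \eqref{est:Lp-by-z-norm}. The appearance of $P_\om(\peps\tauS(\om^{(\eps)}))$ and the remark on $j(0)=0$ are also correct.

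However, there is a genuine gap in the pointwise classification of the error terms. You assert that after subtracting the leading $\tau(\omla)$ contribution, all remaining terms in $\tau_g(\tilde z)$ are bounded pointwise by $C\la^{-2}$. This is false. The projection mismatch $(P_{\tilde z}-P_{\omla})(\Delta\omla)$ is bounded by $\la^{-1}|x|\rho_\la^2$, and the cross terms coming from $P_{\tilde z}\Delta(P_{\omla}j)$ produce a contribution of size $\la^{-1}(1+\la|x|)^{-1}\rho_\la$; at the bubble scale $|x|\sim\la^{-1}$, where $\rho_\la\sim\la$, both are of order $1$, not $\la^{-2}$. In other words there is a third class of terms in the pointwise expansion, of the form $\rho_\la\cdot h$ where $h$ has small $L^2(\DDr)$-norm but $\|h\|_{L^\infty}=O(\la^{-1})$, not $O(\la^{-2})$. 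Your binary classification --- either $\rho_\la^2$ (handled by the weighted Cauchy--Schwarz) or pure $\la^{-2}$ (handled by the $L^1$-estimate) --- cannot accommodate them: treated as $\rho_\la^2$ with an $L^\infty$ coefficient they give only $O(\la^{-1})$, and as pointwise $O(\la^{-2})$ they are simply wrong. The correct route is the pointwise bound of the form $|\tau(\tilde z)|\lesssim [\la^{-1}|x|\rho_\la+\la^{-1}(1+\la|x|)^{-1}+\|\tauS(\om)\|_{C^0}\rho_\la]\rho_\la+\la^{-2}$, followed by Cauchy--Schwarz of the bracket against $\rho_z w$ using $\|\rho_\la|x|+(1+\la|x|)^{-1}\|_{L^2(\DDr)}\lesssim\la^{-1}(\log\la)^{1/2}$ from \eqref{est:norms-z}. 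The same issue recurs in your treatment of the $\eps$-derivative of the remainder in the second-variation estimate. Your final answer happens to coincide with the lemma, but the reasoning that reaches it does not hold.
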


\begin{rmk}\label{rmk:second-var}
For the variations $z_\eps^{(1)}=z_{\la(1-\eps)}^{a, \om}$ considered in Corollary \ref{cor:Expansion-scaling} this lemma yields 
a bound  of
\beq\label{est:second-var-case1}
\norm{d^2E(z)(\peps z_\eps^{(1)}, \cdot)}\leq C \la^{-2}(\log\la)^\half +C\norm{\tauS(\om)}_{C^1(S^2)}
\eeq
where we compute the operator norm with respect to $\norm{\cdot}_z$. 
For more general variations 
 the term $\norm{P_{\om}(\peps \tauS(\bbeps))}_{L^2(S^2)}=\norm{L_{\bb}(\partial \bbeps)}_{L^2(S^2)}$ 
can be of order one,  but will be small since
$T_\bbs \HHone=\ker(L_\bbs)$.
For variations $z_\eps^{(2)}$ as in Corollary \ref{cor:expansion-tau} we shall hence simply use that, after increasing $\la_1$ and decreasing $\si_1>0$ if necessary,
\beq\label{est:second-var-case2}
\norm{d^2E(z)(\peps z^{(2)}, \cdot)}\leq \eta_3
\eeq for a small constant $\eta_3>0$ that is chosen later on.
\end{rmk}

\begin{proof}[Proof of Lemma \ref{lemma:tension}]
The main step in the proof of the lemma is to derive suitable bounds on the tension $\tau_g(z)$ and its variation $P_z(\partial_\eps \tau_g (z))$  on  $B_{\tilde \iota}(a)$. To do this we can work in the usual isothermal coordinates in which 
$z$ is represented by $\tilde z$ and estimate the tension of $\tilde z$ with respect to the Euclidean metric on $\DDr$. Writing $\tilde z$ as in  \eqref{eq:repr-zla-error} gives
\beqa
\tau (\tilde z)&=P_{\tilde z}(\Delta  \tilde z)
=T_1 +T_2 + \text{err}_1 \text{ on } \DDr
\eeqa
for terms 
$$
T_1:=P_{\tilde z}(\Delta  \omlt) \text{ and }T_2:=P_{\tilde z}(\Delta  (P_\omlt j))
$$ that we analyse in detail below and 
an error term $\text{err}_1= P_{\tilde z}(\Delta  E)$ for which 
\eqref{est:Ela-new} gives
 $$\abs{\text{err}_1}+ \abs{\peps \text{err}_1}= O(\la^{-2}).$$
As we can write 
$T_1=(P_{\omlt}-P_{\tilde z}) (A(\omlt)(\na \omlt,\na\omlt))+P_{\tilde z}(\tau (\omlt))$ we can estimate 
\beq \label{est:T1-proof}
\abs{T_1}\leqs \abs{{\tilde z}-\omlt} \rho_\la^2+\abs{\tau (\omlt)}\leqs  \la^{-1}\abs{x}\rho_\la^2+\norm{\tauS (\om)}_{C^0(S^2)} \rho_\la^2,
\eeq
compare \eqref{eq:trafo-tension-la}. 
Furthermore we can use \eqref{est:Ela-new}-\eqref{est:pepsz} to bound 
\beqas
\abs{P_{\tilde z}\partial_\eps T_1}&\leqs \abs{\partial_\eps({\tilde z}-\omlt)}\rho_\la^2+
\abs{{\tilde z}-\omlt}(\abs{\partial_\eps \omlt}\rho_\la^2+\abs{\partial_\eps\na \omlt}\rho_\la)+\abs{\peps \tilde z} \abs{\tau (\omlt)}+\abs{P_{\tilde z}(\peps \tau (\omlt))}
\\
&\leqs  \la^{-1}\abs{x} \rho_\la^2 
+
\norm{\tauS(\om)}_{C^0(S^2)} \rho_\la^2 +\abs{P_{\tilde z}(\peps \tau (\omlt))} . 
\eeqas
To bound the last term we use that \eqref{eq:trafo-tension-la} and \eqref{est:various-bounds} give 
 \beq
 \label{eq:est-var-tens}
 \abs{\peps \tau (\omla)- \half \abs{\na \pi_\la} ^2 (\peps\tau_{g_{S^2}}(\om) )\circ \pi_\la} \leqs \norm{\tau_{g_{S^2}}(\om) }_{C^1(S^2)}\rho_\la^2
\eeq
thus allowing us to bound 
\beqa 
\abs{P_{\tilde z}(\peps \tau (\omlt))}&\leq C\la^{-1}\abs{x} \abs{\peps \tau (\omlt))}+\abs{P_{\omla}(\peps \tau (\omlt)))}
\\ &\leqs \la^{-1}\abs{x}
\rho_\la^2+
\norm{\tau_{g_{S^2}}(\om) }_{C^1(S^2)}\rho_\la^2+ \abs{ (P_{\om}\peps \tau_{g_{S^2}}(\om))\circ \pi_\la}\cdot \abs{ \na \pi_\la} \rho_\la. 
\eeqa
All in all we thus have an estimate of 
\beq
\label{est:T1-var} 
\abs{P_{\tilde z}(\partial_\eps T_1)}\leqs 
\la^{-1}\abs{x} \rho_\la^2 
+
\norm{\tauS(\om)}_{C^1(S^2)} \rho_\la^2 +\abs{ (P_{\om}\peps \tau_{g_{S^2}}(\om))\circ \pi_\la}\cdot \abs{ \na \pi_\la} \rho_\la
.\eeq
Since $j$ is harmonic we have that 
$T_2=-P_{{\tilde z}}(\Delta  (P^\perp_\omlt j))$, so working with respect to a local orthonormal frame $\{\nu^k\}$ of $T^\perp N$ and summing over $k$ we get  
\beqa \label{est:T2-in-proof}
T_2
&=-\langle \nu_\omlt^k,j\rangle   P_{{\tilde z}}(\Delta  \nu_\omlt^k)
-\Delta  (\langle \nu_\omlt^k,j\rangle ) (P_{{\tilde z}}-P_\omlt)(\nu_\omlt^k)-
2 \na ( \langle \nu_\omlt^k,j\rangle) P_{{\tilde z}}(\na \nu_\omlt^k)
\eeqa
allowing us to bound 
\beqa\label{est:T2-proof}
\abs{T_2}&\leqs \abs{j}\rho_\la^2+\abs{\na j}\abs{j}\rho_\la+\rho_\la \abs{\langle \nu_{\tilde \om_\la}^k,\na j\rangle} \leqs \la^{-1}\abs{x}\rho_\la^2+\la^{-1}\rho_\la (1+\la\abs{x})^{-1}
\eeqa
since $j$ maps into $T_{\bb(p^*)}N$ and 
$\abs{\tilde \om_\la-\om(p^*)}\leqs (1+\la\abs{x})^{-1}$. 
 
Furthermore, differentiating \eqref{est:T2-in-proof} with respect to $\eps$ and using \eqref{est:various-bounds}
gives
\beqa \label{est:dla-T2}
\abs{\partial_\eps T_2}&\leqs  \big[\abs{j}+\abs{\peps j}\big] \rho_\la^2+ C\big [(\abs{\na j}+\abs{\peps \na j}\big]\rho_\la\abs{j} \\
& +\abs{\langle \nu^k_\omlt,\na j\rangle} \rho_\la+\abs{\langle \nu^k_\omlt,\peps \na j\rangle}\rho_\la+\abs{\peps \omlt} \abs{\na j} \rho_\la\\
&\leqs \la^{-1}\abs{x}\rho_\la^2 +\la^{-1}\rho_\la \abs{\om_\la-\om(p^*)}+
\la^{-1}(\norm{\peps \bbeps}_{C^1} +(1+\abs{\la x})^{-1}) \rho_\la \\
&\leqs \la^{-1}\abs{x}\rho_\la^2 +
\la^{-1}(\norm{\peps \bbeps}_{C^1} +(1+\abs{\la x})^{-1}) \rho_\la 
\eeqa
where the penultimate step uses  \eqref{est:var-om-eta} as well as that 
$\partial_\la \na j=\la^{-1} \na j\in T_{\om(p^*)}N$ and thus 
$\abs{P_{\om(p^*)}(\peps \na j)}\leqs \la^{-1}\norm{\peps \bbeps}_{C^1}$.  

All in all we thus find that on $\DDr$
\beq
\label{est:tension-pw}
\abs{\tau (\tilde z)}\leqs \big[ \la^{-1}\abs{x}\rho_\la+\la^{-1}
(1+\abs{\la x})^{-1}+\norm{\tauS (\om)}_{C^0(S^2)} \rho_\la] \rho_\la+\la^{-2}.
\eeq
while 
\beqa 
\label{est:var-tension-pw}
\abs{P_{\tilde z}(\peps \tau(\tilde z))}& \leqs \la^{-1} \big[
\abs{x} \rho_\la+ \norm{\peps \bbeps}_{C^1(S^2)} +(1+\abs{\la x})^{-1})\big] \rho_\la +\la^{-2}\\
& +
\norm{\tauS(\om)}_{C^1(S^2)} \rho_\la^2 +\abs{ (P_{\om}\peps \tau_{g_{S^2}}(\om))\circ \pi_\la} \abs{\na \pi_\la} \rho_\la,
\eeqa
where we note that
$\norm{(P_{\om}\peps \tau_{g_{S^2}}(\om))\circ \pi_\la\abs{ \na  \pi_\la}}_{L^2(\DDr)}= \sqrt{2}\norm{(P_{\om}(\peps \tauS( \om))}_{L^2(\pi_\la(\DDr))}$.

As the energy is conformally invariant, $\norm{w}_z=1$ and as $\tau_g(z)$ and $\peps \tau_g(z)$ are of order $O(\la^{-2})$  
on $\Si\setminus B_{\tilde \iota}(a)$, compare \eqref{est:v-Delta-away},
 we hence get from 
 \eqref{est:tension-pw}, \eqref{est:Lp-by-z-norm} and \eqref{est:norms-z} that 
\beqas
\abs{dE(z)(w)}&=\babs{\int_{\Si} \tau_g(z) w \dd v_g}\leq C\la^{-2} \norm{w}_{L^1(\Si)} +
\int_{\DDr} \abs{\tau(\tilde z)} \abs{w\circ F_a^{-1}} dx\\
&\leqs  \la^{-2}(\log\la)^\half+
\la^{-1}\norm{\abs{x}\rho_\la}_{L^2(\DDr)} +\la^{-1}
\norm{(1+\abs{\la x})^{-1}}_{L^2(\DDr)}+\norm{\tau_{g_{S^2}}(\om)}_{C^0(S^2)}\\
&\leqs \la^{-2}(\log\la)^\half+\norm{\tau_{g_{S^2}}(\om)}_{C^0(S^2)}
\eeqas 
as claimed in the lemma. Finally, as $w$ is tangential to $N$ along $z$ we have
\beqas
d^2E(z)(\peps z ,w)&=-\int_\Si \peps\tau_g(z) \cdot w \dd v_g= -\int_\Si P_z(\peps\tau_g(z)) \cdot w\dd v_g\\
&= -\int_{\DDr} P_{\tilde z}(\peps\tau(\tilde z)) \cdot w\circ F_a^{-1} dx+ O(\la^{-2}\norm{w}_{L^1(\Si)})\eeqas
and inserting \eqref{est:var-tension-pw} and 
\eqref{est:norms-z} immediately gives 
the second claim \eqref{est:second-var} of the lemma. 
\end{proof}

\section{Proof of Theorem \ref{thm:main}}
\label{sect:proof-main}
We now turn to the proof of our first main result. To this end we  first observe that two adapted bubbles with quite different scales $\la_1,  \la_2$ respectively with quite different underlying maps $\om_1$ and $\om_2$  cannot be close. Namely we have the following lemma of which we provide a short proof in the appendix

\begin{lemma}
\label{lemma:different-scales}
Let $\bbs$ be any harmonic sphere and let $\la_1\geq 2$, $\si_1>0$ be any given numbers for which $\ZZ=\ZZ_{\la_1}^{\si_1}$ is well defined. Then there exist numbers $\eps_{3}>0$ and 
 $\la_2\geq \la_1$ depending only on 
 $\si_1$, $\bbs$, $(\Si,g)$ and $N$ so that 
 \beq
\label{est:dist-larger}
\norm{\zone-\ztwo}_{\zone}\geq \eps_3 
\eeq
for all elements 
$\zone, \ztwo\in \ZZ$ with $\la\geq \la_2$ 
for which either
$\la\tilde \la^{-1} \notin [\half,2] $ 
or for which $\om\in \HH_1^{\frac13 \si_1}(\bbs)$ while 
$\tilde \om\in  \HH_1^{\si_1}(\bbs)\setminus \HH_1^{\frac23\si_1}(\bbs)$. 

Furthermore, for any 
$\eps>0$  there exist $\de>0$ and $\la_3\geq \la_2$ so that 
for any $\zlar, \ztwo\in \ZZ$ with $\la\geq \la_3$ we have 
\beq
\label{est:L-infty-difference}
\norm{\zone-\ztwo}_{L^\infty(\Si)}< \thalf\eps
\text{
whenever }\norm{\zone-\ztwo}_{\zone}<\de.\eeq
\end{lemma}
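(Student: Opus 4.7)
The plan is to prove both claims by compactness arguments that exploit the concentration structure of adapted bubbles.

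For \eqref{est:dist-larger} I argue by contradiction: suppose there are sequences $z_1^{(i)}=z_{\la_i}^{a_i,\om_i}$ and $z_2^{(i)}=z_{\tilde\la_i}^{\tilde a_i,\tilde\om_i}$ in $\ZZ$ with $\la_i\to\infty$, satisfying one of the two stated alternatives, but with $\|z_1^{(i)}-z_2^{(i)}\|_{z_1^{(i)}}\to 0$. Smooth compactness of $\overline{\HHone}$ yields limits $\om_i\to\om_\infty$ and $\tilde\om_i\to\tilde\om_\infty$ in $C^k$, and after passing to a subsequence we may assume $a_i\to a_\infty$, $\la_i/\tilde\la_i\to\mu\in[0,\infty]$, and $\la_i F_{a_i}(\tilde a_i)\to b\in\R^2\cup\{\infty\}$.

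I then blow up at the natural scale of $z_1^{(i)}$, setting $W_k^{(i)}:=z_k^{(i)}\circ(\pi_{\la_i}\circ F_{a_i})^{-1}$. Using the defining formulas \eqref{def:v-away}--\eqref{def:z-bubble} together with the error bounds \eqref{est:elart} and \eqref{est:v-away}, one sees that $W_1^{(i)}\to\om_\infty$ smoothly on compact subsets of $S^2\setminus\{p^*\}$, while $W_2^{(i)}$ converges on such compacta either to $\tilde\om_\infty\circ M$ for a Möbius transform $M\in\Mob$ determined by $\mu$ and $b$ (when $\mu\in(0,\infty)$ and $b\in\R^2$), or to the constant $\tilde\om_\infty(p^*)$ (when $\mu\in\{0,\infty\}$ or $b=\infty$). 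The hypothesis $\|z_1^{(i)}-z_2^{(i)}\|_{z_1^{(i)}}\to 0$ translates, via the conformal behavior of $\rho_{z_1^{(i)}}$ and of $\nabla$ under this blow-up, to $W_1^{(i)}-W_2^{(i)}\to 0$ in $H^1\loc(S^2\setminus\{p^*\})$, so the two limits must coincide. In alternative (a), $\la/\tilde\la\notin[\tfrac12,2]$, after possibly swapping indices one has $\mu=0$ or $b=\infty$, so the limit of $W_2^{(i)}$ is constant while $d\om_\infty(p^*)\neq 0$ by \eqref{ass:alpha}, a contradiction. In alternative (b) one necessarily has $\mu\in[\tfrac12,2]$ and $b\in\R^2$; the identity $\om_\infty=\tilde\om_\infty\circ M$, combined with the construction of $\HHone$ in \eqref{def:H1} as a slice transverse to the non-rotational Möbius action on $\bbs$, forces $\tilde\om_\infty\in\HH_1^{\frac23\si_1}(\bbs)$ for $\si_1$ sufficiently small, again a contradiction.

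For \eqref{est:L-infty-difference}, I take $\de<\eps_3$ and $\la_3\geq\la_2$ so that the first claim applies, giving $\la/\tilde\la\in[\tfrac12,2]$ and $\tilde\om\in\HH_1^{\frac23\si_1}(\bbs)$; a completely parallel contradiction argument focused on the centre parameter yields that $\la\cdot\dist_g(a,\tilde a)$ is bounded by a constant depending only on $\si_1,\bbs,(\Si,g),N$. In this regime $\om,\tilde\om$ are uniformly $C^{k+2}$-bounded by \eqref{claim:Ck-equiv}, the gluing construction \eqref{def:tilde-v}--\eqref{def:z-bubble} is smooth with errors of size \eqref{est:elart}, and hence $z_1$ and $z_2$, when pulled back by $(\pi_\la\circ F_a)^{-1}$, are uniformly bounded in $C^2(S^2)$. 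Standard interpolation between this $C^2$ bound and the $H^1\cap L^2(\rho_z^2\,dv_g)$-smallness provided by $\|z_1-z_2\|_{z_1}<\de$ then upgrades $\de$-smallness in the weighted norm to $L^\infty(\Si)$-smallness, both in the bubble region (after rescaling to $S^2$) and away from it (where $\rho_z\sim 1/\la$ makes the interpolation immediate). The main obstacle is alternative (b) of the first claim, where one must carefully verify that the dilation-type Möbius transform $M$ produced by $\mu\neq 1$ or $b\neq 0$ cannot be absorbed into the slice $\HHone$; this relies on the precise construction of $\HHone$ from Appendix \ref{sec:Simon} and on choosing $\si_1$ small enough that the Möbius orbit of any element of $\HH_1^{\frac13\si_1}(\bbs)$ meets $\HH_1^{\si_1}(\bbs)$ only inside the rotational subgroup already built into its definition.
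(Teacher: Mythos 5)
Your overall approach --- arguing by contradiction and blowing up at the scale of $z_1^{(i)}$, transferring the hypothesis $\|z_1^{(i)}-z_2^{(i)}\|_{z_1^{(i)}}\to 0$ into $H^1_{loc}$-convergence of $W_1^{(i)}-W_2^{(i)}$ on $S^2\setminus\{p^*\}$ --- is a valid compactness strategy closely related to the paper's, just carried out in one pass. The paper instead isolates the purely spherical statement as Lemma \ref{lemma:est-different-scales-sphere} for the maps $\hat z_\la^{b,\om}=\om\circ M_\la^b$, reduces to $\la=1$, $b=0$ via the covariance \eqref{eq:covariance-norms}, obtains the lower bound from the soft fact that the $H^1$-closures of $F_1,F_2$ are disjoint from the compact sets $\HH_1^{\frac13\si_1}(\bbs)$, $\HH_1^{\si_1}(\bbs)$, and only then reduces the $\Sigma$-version to the sphere version with $O(\la^{-1}(\log\la)^{1/2})$ error estimates. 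Both routes ultimately use the same compactness input.

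There is, however, a concrete error in your treatment of alternative (a). You assert that $\la/\tilde\la\notin[\tfrac12,2]$ forces, after swapping indices, $\mu=0$ or $b=\infty$, so that the limit of $W_2^{(i)}$ is constant and $d\om_\infty(p^*)\neq 0$ gives a contradiction. This is false: $\la_i/\tilde\la_i\notin[\tfrac12,2]$ only puts the subsequential limit $\mu$ in $[0,\tfrac12]\cup[2,\infty]$, and swapping indices merely exchanges $\mu$ with $1/\mu$. For example $\mu=3$ with $b$ finite is perfectly consistent with alternative (a); then $W_2^{(i)}\to\tilde\om_\infty\circ M$ for a genuine dilation $M$, which is not constant, and $d\om_\infty(p^*)\neq 0$ yields no contradiction. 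To conclude you must invoke exactly the transversality of $\HHone$ to the non-rotational part of $\Mob$ --- the same ingredient you deploy for alternative (b). In other words, (a) and (b) are not as separable as your write-up suggests: both rest on the fact that no M\"obius transform whose dilation factor is bounded away from $1$ (or whose translation part is bounded away from $0$) can carry one element of $\HHone$ into $\HHone$, and this is precisely what the paper packages into the disjointness of $\overline{F_2}^{H^1}$ from $\HHone$. Your handling of the $L^\infty$ claim (uniform $C^2$ bounds after rescaling plus interpolation) is equivalent to the paper's Ehrling-lemma argument and is fine.
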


This lemma now allows us to prove that in the setting of Theorem \ref{thm:main} we have

\begin{lemma}\label{lemma:choosing-z}
Let $\bbs$ be any harmonic sphere and let $\la_1\geq 2$, $\si_1>0$ be any given numbers for which $\ZZ=\ZZ_{\la_1}^{\si_1}$ is well defined. Then for every $\eps>0$ there exist $\eps_1>0$ and $\bar\la\geq \la_1$  so that for any $u\in H^1(\Si,N)$ for which 
$$\norm{u-z_0}_{L^\infty(\Si)}+\norm{\na (u-z_0)}_{L^2(\Si)}<\eps_1 \text{ for some }z_0\in \ZZ_{\bar \la}^{\frac13 \si_1}$$
we have that the infimum 
$\dist(u,\ZZ):=\inf_{ z\in \ZZ} \norm{u- z}_{ z}$ is attained on $\ZZ$ and for every minimiser $z$ of this distance we have that 
$z\in \ZZ\setminus \partial Z$ and 
\beq 
\label{def:z-inf}
\norm{u-z}_{L^\infty(\Si)}<\eps.
\eeq
\end{lemma}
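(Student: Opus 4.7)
The plan proceeds in three stages: bound $\dist(u,\ZZ)$ from above, establish compactness of minimising sequences, and deduce the $L^\infty$ closeness of the minimiser. For the first stage I would use $z_0$ itself as a test element. Under the local isothermal coordinates of Remark \ref{rmk:Fa}, a direct computation gives $\int_\Si\rho_{z_0}^2\dd v_g\leq C$ uniformly in $\la_0$, essentially because this reduces to $\int_{\R^2}\rho_{\la_0}^2\dd x=\pi$ on the bubble region together with a lower-order contribution elsewhere. Hence
\[
\norm{u-z_0}_{z_0}^2=\norm{\na(u-z_0)}_{L^2}^2+\int_\Si\rho_{z_0}^2|u-z_0|^2\dd v_g\leq \eps_1^2+C\eps_1^2,
\]
so $\dist(u,\ZZ)\leq C\eps_1$. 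For any minimising sequence $z_n=z_{\la_n}^{a_n,\om_n}$ one then has $\norm{u-z_n}_{z_n}\leq C\eps_1+o(1)$, and the same uniform bound on $\norm{\rho_{z_n}}_{L^2(\Si)}$ combined with the triangle inequality yields $\norm{z_0-z_n}_{z_n}\leq C\eps_1$.

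For the compactness step I would fix $\eps_1$ so that $C\eps_1<\eps_3$ (the constant of Lemma \ref{lemma:different-scales}) and apply that Lemma with $z_n$ in the role of $\zone$ and $z_0$ in the role of $\ztwo$. Provided $\la_n\geq\la_2$, this forces $\la_n/\la_0\in[\tfrac12,2]$, since the second alternative of Lemma \ref{lemma:different-scales} cannot trigger under the assumption $\om_0\in\HH_1^{\si_1/3}\subset\HH_1^{2\si_1/3}$. To rule out the residual case $\la_n<\la_2$, I take $\bar\la$ sufficiently large and argue directly: pulling back via the conformal map $(\pi_{\la_0}\circ F_{a_0})^{-1}$, the rescaled map $\tilde u=u\circ(\pi_{\la_0}\circ F_{a_0})^{-1}$ is $H^1(S^2)$-close to $\om_0$, while the rescaled $z_n$ has bubble parameter $\la_n/\la_0\ll 1$ and is thus uniformly close in $H^1(S^2)$ to a constant map (a bubble becomes asymptotically flat when viewed at a much larger scale). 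Conformal invariance of the Dirichlet energy then forces $\int_\Si|\na(u-z_n)|^2\to 2E(\om_0)\geq 2E(\bbs)-o(1)$, contradicting the gradient bound $\norm{\na(u-z_n)}_{L^2}\leq C\eps_1$ for $\eps_1$ sufficiently small. Once $\la_n/\la_0\in[\tfrac12,2]$, the weights $\rho_{z_0}$ and $\rho_{z_n}$ are pointwise comparable, so $\norm{z_0-z_n}_{z_0}\leq C\norm{z_0-z_n}_{z_n}\leq C\eps_1$, and a second application of Lemma \ref{lemma:different-scales} (now with $z_0$ as $\zone$) eliminates the remaining alternative $\om_n\in\HH_1^{\si_1}\setminus\HH_1^{2\si_1/3}$ via the second condition of that Lemma.

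With $(\la_n,a_n,\om_n)$ thus confined to a compact subset of the interior of the parameter space, smooth dependence of $z_\la^{a,\om}$ on its parameters yields a subsequence converging to $z^*=z_{\la^*}^{a^*,\om^*}\in\ZZ$, and continuity of $z\mapsto\norm{u-z}_z$ ensures that $z^*$ realises $\dist(u,\ZZ)$. By construction $\la^*\geq\la_0/2>\la_1$ (for $\bar\la>2\la_1$) and $\om^*\in\HH_1^{2\si_1/3}$ lies in the interior of $\HH_1^{\si_1}$, so $z^*\in\ZZ\setminus\partial\ZZ$. For the $L^\infty$ conclusion I would invoke the second assertion of Lemma \ref{lemma:different-scales}: given $\eps>0$, choose $\de>0$ and $\la_3\geq\la_2$ so that $\norm{\zone-\ztwo}_{\zone}<\de$ implies $\norm{\zone-\ztwo}_{L^\infty}<\eps/2$ whenever the scale of $\zone$ is at least $\la_3$. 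Taking $\bar\la\geq\la_3$ and $\eps_1$ so small that $C\eps_1<\de$ and $\eps_1<\eps/2$, the bound $\norm{z_0-z^*}_{z_0}\leq C\eps_1<\de$ gives $\norm{z_0-z^*}_{L^\infty}<\eps/2$, which combined with $\norm{u-z_0}_{L^\infty}<\eps_1$ produces $\norm{u-z^*}_{L^\infty}<\eps$ as required. The main obstacle is the small-$\la_n$ case of the compactness step: Lemma \ref{lemma:different-scales} supplies a lower bound only when the scale of $\zone$ is at least $\la_2$, so it cannot be invoked when $\la_n$ itself is small, and a separate conformal-rescaling argument is needed in that regime.
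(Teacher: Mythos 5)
Your plan follows the paper's proof quite closely: test with $z_0$ using the uniform bound on $\int\rho_z^2\,dv_g$ to get $\dist(u,\ZZ)\leq C\eps_1$, use the triangle inequality to control the distance between $z_0$ and any competitor, invoke Lemma~\ref{lemma:different-scales} to confine competitors to a compact interior subset of the parameter space, and apply the $L^\infty$ part of Lemma~\ref{lemma:different-scales} to conclude. Where you go further than the paper is in the careful accounting of which norm appears in Lemma~\ref{lemma:different-scales}: the bound you (and the paper) actually obtain is on $\norm{z_0-z_n}_{z_n}$, so the only immediately applicable instance of Lemma~\ref{lemma:different-scales} has $z_n$ playing the role of $\zone$, and the hypothesis of that lemma then requires $\la_n\geq\la_2$ --- which is not known a priori. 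The paper's proof asserts applicability of the lemma on the basis of $\la(z_0)\geq\la_2$, which would require a bound on $\norm{z_0-z_n}_{z_0}$ instead; your observation that this is not automatic is a genuine refinement, and your conformal-rescaling argument to exclude $\la_n<\la_2$ (the flattened bubble $z_n$ cannot match the concentrated energy of $u\approx z_0$ at scale $\la_0^{-1}$ if $\eps_1$ is small and $\bar\la$ is large) is correct.

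There is one small remaining gap in your argument, in the passage from ``$\la_n/\la_0\in[\tfrac12,2]$'' to ``$\rho_{z_0}$ and $\rho_{z_n}$ are pointwise comparable.'' Comparability of the two weights requires not only $\la_n\approx\la_0$ but also $d_\Si(a_n,a_0)\lesssim\la_0^{-1}$: if the concentration points are separated by $K\la_0^{-1}$ with $K$ large, then $\rho_{z_0}/\rho_{z_n}$ is of order $K^2$ near $a_0$ and of order $K^{-2}$ near $a_n$. You do not establish the closeness of $a_n$ to $a_0$ before invoking the comparability, so the step $\norm{z_0-z_n}_{z_0}\leq C\norm{z_0-z_n}_{z_n}$ is not justified as written. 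The fix is of the same kind as your $\la_n<\la_2$ argument (two bubbles concentrated at well-separated points of comparable scale have almost disjoint energy, forcing $\norm{\na(u-z_n)}_{L^2}^2\gtrsim 2E(\bbs)$), or one could note that the proof of Lemma~\ref{lemma:different-scales} given in the appendix already treats $d(a,\tilde a)\geq C\la^{-1}$ as a case where \eqref{est:dist-larger} is ``trivially'' true; but since that case distinction is not part of the statement of the lemma, it would need to be made explicit here.
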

\begin{proof}[Proof of Lemma \ref{lemma:choosing-z}]
Let $\eps>0$ and let
$u\in H^1(\Si,N)$ and $z_0=z_{\la_0}^{a_0,\om_0}\in \ZZ_{\bar \la}^{\frac13\si_1}$ be so that 
the above assumptions are satisfied for numbers
$\eps_1\in (0,\half \eps)$ and $\bar \la> \max(\la_3,2\la_1)$ that are chosen below, 
$\la_3$ the constant from Lemma \ref{lemma:different-scales}.

Since $\int\rho_z^2 \dd v_{g}$ is bounded uniformly on $\ZZ$, we have that 
$
\norm{u-z_0}_{z}\leq C\eps_1$ for every $z\in\ZZ$.
We can thus choose $\eps_1>0$ small enough so that   
any $z=\zlar\in \ZZ$ with $\norm{ z-u}_{ z}\leq \norm{z_0-u}_{z_0}$ must be so that 
$$\norm{z_0- z}_{z}\leq \norm{z-u}_{z}+\norm{z_0-u}_z\leq \norm{z_0-u}_{z_0}+ \norm{z_0-u}_z\leq 2C\eps_1
<\min(\de,\eps_3),$$ for $\de, \eps_3>0$ as in Lemma \ref{lemma:different-scales}. 
As $\la(z_0)\geq \la_3 \geq \la_2$ we can apply this lemma to conclude that 
any such $z$ is contained in the compact subset of adapted bubbles for which the parameters are constrained by $\la\in[\half\la(z_0),2\la(z_0)]$ and
$\om\in \HH_1^{\frac23\si_1}$. 
Hence 
$z\mapsto \norm{u-z}_{z}$ achieves its minimum over $\ZZ$ on this compact subset and any minimiser in $\ZZ$ is contained in this subset of $\ZZ\setminus \partial Z$. Furthermore, the last part of Lemma \ref{lemma:different-scales} yields that any minimiser satisfies
$\norm{u-z}_{L^\infty}\leq \norm{z-z_0}_{L^{\infty}}+ \norm{u-z_0}_{L^\infty}<\frac{\eps}{2}+\eps_1\leq \eps$.   
\end{proof}
As the norm $\norm{\cdot}_z$ depends on $z$, we cannot expect that the difference $w=u-z$ between $u$ and a minimiser $z$ of $\tilde z\mapsto \norm{u-\tilde z}_{\tilde z}$ 
is orthogonal to $T_z\ZZ$. However, as
we shall see in Lemma \ref{lemma:est-variations} in the appendix, we can bound the variation of the weight $\rho_z$ along any variation $z_\eps\in \ZZ$
by 
\beq
\label{est:var-weight-main}
\norm{\peps \rho_{z}}_{L^2(\Si)}\leq C \norm{\peps z}_{z}.
\eeq
This allows us to 
obtain that $w=u-z$ is almost orthogonal to $T_z\ZZ$ in the sense that

\begin{lemma}
\label{lemma:almost orthogonal}
Let  $u\in H^1(\Si,N)$ and suppose that $z\in \ZZ\setminus \partial \ZZ$ minimises 
$\tilde z\mapsto \norm{u-\tilde z}_{\tilde z}$ on $\ZZ$. Then 
$w:=u-z$
satisfies
\beq \label{est:proj-proj-w}\norm{P^{T_z\ZZ}(P_zw)}_z\leq C\norm{w}_{L^\infty(\Si)}\norm{w}_{z}\eeq
where $P_{z}$ denotes the (pointwise) orthogonal projection from 
$\R^n$ to $T_{z(p)}N$, while $P^{T_z\ZZ}:\Gamma^{H^1}(z^*TN)\to T_{z}\ZZ$ is the $\innz$-orthogonal projection.
\end{lemma}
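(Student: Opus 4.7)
My plan is to exploit the Euler--Lagrange equation arising from $z$ being an interior minimiser on $\ZZ$, combined with the geometric fact that for two points on the embedded submanifold $N\hookrightarrow \R^n$ the normal component of their chord is quadratic in the chord length.

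First, since $z \in \ZZ\setminus \partial \ZZ$ minimises $\tilde z \mapsto \norm{u - \tilde z}_{\tilde z}^2$, for every smooth variation $z_\eps\in \ZZ$ with $z_0=z$ I will differentiate
\[
\norm{u-z_\eps}_{z_\eps}^2=\int_\Si \abs{\na(u-z_\eps)}^2 + \rho_{z_\eps}^2 \abs{u-z_\eps}^2\,\ed v_g
\]
at $\eps=0$ to obtain the critical point identity
\[
\langle w, \peps z\rangle_z=\int_\Si \rho_z\, \peps \rho_z\, \abs{w}^2\,\ed v_g.
\]
Bounding one factor of $\abs{w}$ by $\norm{w}_{L^\infty}$, applying Cauchy--Schwarz, and then invoking the variation estimate \eqref{est:var-weight-main}, $\norm{\peps \rho_z}_{L^2}\leq C\norm{\peps z}_z$, will yield
\[
\abs{\langle w, \peps z\rangle_z}\leq \norm{w}_{L^\infty}\norm{\rho_z w}_{L^2}\norm{\peps \rho_z}_{L^2}\leq C\norm{w}_{L^\infty}\norm{w}_z\norm{\peps z}_z.
\]

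Next, to pass from $\langle w, v\rangle_z$ to $\langle P_z w, v\rangle_z$ for $v=\peps z\in T_z\ZZ$, I will write $\langle P_z w, v\rangle_z = \langle w, v\rangle_z - \langle P_z^\perp w, v\rangle_z$ and bound the second term separately. Since $v$ is pointwise tangent to $N$ along $z$, it is pointwise orthogonal to $P_z^\perp w$, so the zeroth order contribution $\int \rho_z^2 (P_z^\perp w)\cdot v\,\ed v_g$ vanishes and only $\int \na(P_z^\perp w)\cdot \na v\,\ed v_g$ needs to be estimated. The crucial input is that, since both $u$ and $z$ take values in $N$, a local parametrisation of $N$ as a graph over $T_{z(p)}N$ with vanishing first order part gives $P_z^\perp w = \Psi(z,w)$ with $\Psi(z,0)=0$ and $\partial_w\Psi(z,0)=0$; in particular $\abs{P_z^\perp w}\leq C\abs{w}^2$, and differentiating in space together with $\abs{\na z}\leq C\rho_z$ from \eqref{est:z-by-weight} produces
\[
\abs{\na(P_z^\perp w)}\leq C\abs{w}^2\rho_z + C\abs{w}\abs{\na w}\leq C\norm{w}_{L^\infty}\bigl(\rho_z\abs{w}+\abs{\na w}\bigr).
\]
Cauchy--Schwarz will then give $\abs{\langle P_z^\perp w, v\rangle_z}\leq C\norm{w}_{L^\infty}\norm{w}_z\norm{v}_z$, and combining with the previous paragraph and taking the supremum over unit vectors $v\in T_z\ZZ$ yields \eqref{est:proj-proj-w}.

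The main obstacle is precisely this last step, namely controlling the normal part in the right norm. A naive bound of the form $\abs{P_z^\perp w}\leq \abs{w}$ together with $\abs{\na(P_z^\perp w)}\leq \abs{\na w}+C\rho_z\abs{w}$ would only produce $\abs{\langle P_z^\perp w, v\rangle_z}\leq C\norm{w}_z\norm{v}_z$ and destroy the desired factor $\norm{w}_{L^\infty}$; it is essential to exploit that $w$ joins two points of $N\hookrightarrow \R^n$, via the quadratic vanishing of the normal height of $N$ above its tangent plane, in order to recover this extra factor.
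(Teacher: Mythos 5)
Your proposal is correct and takes essentially the same route as the paper: differentiate the minimality constraint to obtain $\langle w,\peps z\rangle_z=\int\rho_z\peps\rho_z\abs{w}^2$, invoke \eqref{est:var-weight-main} and Cauchy--Schwarz, and then replace $w$ by $P_zw$ using the quadratic vanishing of the normal component (which is precisely the content of \eqref{est:w-Pw-znorm}). The only cosmetic difference is that you observe the $\rho_z^2$ part of $\langle P_z^\perp w,v\rangle_z$ vanishes pointwise, while the paper just bounds the full weighted norm $\norm{w-P_zw}_z$; both yield the same estimate.
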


\begin{proof} 
Given any variation $z_\eps$ of a minimiser $z\in\ZZ\setminus \partial Z$ of $\tilde z\mapsto \norm{u-\tilde z}_{\tilde z}$ we can combine the resulting constraint that
$\ddeps \norm{u-z_{\eps}}_{z_\eps}^2=0$   with \eqref{est:var-weight-main} to conclude that at $\eps=0$ 
\beqas
\langle w,\peps z\rangle_{z}&=\int_\Si \rho_{z} \peps \rho_{z_\eps}\abs{w}^2 \dd v_g 
\leq C \norm{w}_{z} \norm{w}_{L^\infty(\Si)}\norm{\peps \rho_{z_\eps}}_{L^2(\Si)}\\
& \leq 
C \norm{w}_{z}\norm{w}_{L^\infty(\Si)}\norm{\peps z_\eps}_{z}.\\
\eeqas
As $\norm{P_zw-w}_{z}\leq C\norm{w}_{L^\infty}\norm{w}_z$, compare  \eqref{est:w-Pw-znorm} below, we thus obtain
the bound
$$
\abs{\langle P_z w,v\rangle_z} \leq C\norm{w}_{L^\infty}\norm{w}_z \norm{v}_z \text{ for every } v\in T_z \ZZ$$
which is equivalent to the claim \eqref{est:proj-proj-w} of the lemma. 
\end{proof}

This almost orthogonality of $w$ to $T_z\ZZ$ is sufficient to
exploit the uniform definiteness of the second variation orthogonal to $\ZZ$. This is crucial to obtain the following initial estimate on $w$, which will play the role of  \cite[Lemma 2.7]{MRS} in this new setting where we work with a family of distances induced by the norms $\norm{\cdot}_z$ on the infinite dimensional set of maps $H^1(\Si,N)$ rather than in a fixed Hilbert-space.

\begin{lemma}\label{lemma:step-1}
There exists $\eps_2>0$ so that for any  $u\in H^1(\Si,N)$ for which 
\beq
\label{est:norm-less-eps}
\norm{u-z}_{z}=\inf_\ZZ \norm{u-\tilde z}_{\tilde z}<\eps_2 \text{ and } \norm{z-u}_{L^\infty(\Si)}<\eps_2
\eeq
for some  $z=z_{\la}^{a,\om}\in\ZZ\setminus \partial \ZZ$, 
we can bound $w:=u-z$ by
\beq
\label{est:step-1}
\norm{w}_{z}^2\leq C (dE(u)-dE(z))( \tilde w_z)+C\log\la\norm{\tau_g(u)}_{L^2(\Si,g)}\norm{w}_{z}^2
\eeq
and therefore have
\beqa \label{est:w-lemma}
\norm{w}_z& \leqs \norm{\tau_g(u)}_{L^2(\Si,g)}(\log\la)^\half +\la^{-2}(\log\la)^\half+\norm{\tau_{g_{S^2}}(\om)}_{L^2(S^2)}+(\log\la)^2\norm{\tau_g(u)}_{L^2(\Si,g)}^2.
\eeqa
Here $\tilde w_z=
(P^{\VV_z^+}-P^{\VV_z^-})(P_z w)$ is defined using the $\langle\cdot,\cdot\rangle_z$-orthogonal projections from $\Gamma^{H^1}(z^*TN)$ to the subspaces $\VV_{z}^{\pm}$ obtained in Lemma \ref{lemma:2ndvar}.
\end{lemma}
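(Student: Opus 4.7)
The argument proceeds in three steps: a coercivity estimate for $d^2E(z)(P_z w, \tilde w_z)$, a comparison of this second-variation quantity with $(dE(u)-dE(z))(\tilde w_z)$, and finally a first-order bound on the latter combined with an absorption argument to deduce \eqref{est:w-lemma}.

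\emph{Step 1 (coercivity).} Using the spectral splitting from Lemma~\ref{lemma:definite} we decompose $P_z w = a+b+c$ with $a\in T_z\ZZ$, $b\in \VV_z^+$, $c\in \VV_z^-$, so that $\tilde w_z = b-c$ by definition. Since $d^2E(z)$ vanishes between $\VV_z^+$ and $\VV_z^-$ and satisfies the definiteness of Lemma~\ref{lemma:definite} on each summand,
\[
d^2E(z)(P_z w,\tilde w_z) = d^2E(z)(a,b-c) + d^2E(z)(b,b) - d^2E(z)(c,c) \geq c_0\norm{b+c}_z^2 - C\norm{a}_z\norm{w}_z.
\]
The almost-orthogonality $\norm{a}_z \leq C\norm{w}_{L^\infty}\norm{w}_z$ from Lemma~\ref{lemma:almost orthogonal}, combined with the pointwise submanifold bound $|P_z^\perp w|\leqs |w|^2$ (valid because both $u$ and $z$ map into $N$), gives $\norm{b+c}_z \geq (1-C\norm{w}_{L^\infty})\norm{w}_z$, so that for $\eps_2$ sufficiently small we conclude $d^2E(z)(P_z w,\tilde w_z) \geq \tfrac{c_0}{2}\norm{w}_z^2$.

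\emph{Step 2 (comparison identity and the bad term).} Writing $\na w = \na P_z w + \na P_z^\perp w$ and applying Lemma~\ref{lemma:2ndvar} to the tangential piece, we obtain the algebraic identity
\[
(dE(u)-dE(z))(\tilde w_z) = \int_\Si \na w \cdot \na \tilde w_z\, dv_g = d^2E(z)(P_z w,\tilde w_z) + \mathcal{E}_1 + \mathcal{E}_2,
\]
with $\mathcal{E}_1 = \int A(z)(\na z,\na z) A(z)(P_z w,\tilde w_z)\,dv_g$ and $\mathcal{E}_2 = \int \na(P_z^\perp w)\cdot \na\tilde w_z\,dv_g$. Using $|A(z)(\na z,\na z)|\leqs \rho_z^2$, $|P_z^\perp w|\leqs |w|^2$ with $|\na(P_z^\perp w)|\leqs |w|(|\na u|+\rho_z|w|)$, Cauchy--Schwarz and \eqref{est:Lp-by-z-norm}, after expanding $|\na u|^2\leqs \rho_z^2+|\na w|^2$ the delicate contribution reduces to $\int |w||\na w|^2|\tilde w_z|\,dv_g$. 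This is handled via the harmonic map equation $\Delta_g u = \tau_g(u)-A(u)(\na u,\na u)$ and integration by parts: the $\tau_g(u)$-piece produces the $C\log\la\norm{\tau_g(u)}_{L^2}\norm{w}_z^2$ correction in \eqref{est:step-1}, while the other pieces are absorbed into the coercive lower bound of Step~1 once $\norm{w}_{L^\infty}$ is sufficiently small.

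\emph{Step 3 (first-order bound and inversion).} We now bound $(dE(u)-dE(z))(\tilde w_z)$ termwise. Lemma~\ref{lemma:tension} combined with \eqref{claim:Ck-equiv} and $\norm{\tilde w_z}_z\leq\norm{w}_z$ gives $|dE(z)(\tilde w_z)|\leqs (\la^{-2}(\log\la)^{1/2}+\norm{\tauS(\om)}_{L^2})\norm{w}_z$. For $dE(u)(\tilde w_z) = -\int \tau_g(u)\tilde w_z\,dv_g + \int A(u)(\na u,\na u)\cdot P_u^\perp \tilde w_z\,dv_g$, the first term is bounded via \eqref{est:Lp-by-z-norm} by $C(\log\la)^{1/2}\norm{\tau_g(u)}_{L^2}\norm{w}_z$, and the second by $C\norm{w}_{L^\infty}\norm{w}_z^2$ using $|P_u^\perp\tilde w_z|\leqs|w||\tilde w_z|$. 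Substituting into \eqref{est:step-1} yields a linear-plus-quadratic inequality in $\norm{w}_z$; a dichotomy on $\log\la\norm{\tau_g(u)}_{L^2}$ (absorb into the LHS when small, use the a priori bound $\norm{w}_z\leqs 1$ dominated by $(\log\la)^2\norm{\tau_g(u)}_{L^2}^2$ when bounded below) completes the proof of \eqref{est:w-lemma}. \textbf{Main obstacle:} Step~2, specifically extracting the $\log\la\norm{\tau_g(u)}_{L^2}$ correction from the bulk integral $\int |\na w|^2|\tilde w_z|\,dv_g$, which cannot be bounded from the $H^1$-regularity of $w$ alone and must be re-expressed through the tension equation.
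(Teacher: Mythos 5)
Your Steps 1 and 3 are sound and follow the same outline as the paper. The gap is in Step 2, and it is not cosmetic.

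You decompose
\[
\int_\Si \na w\cdot\na\tilde w_z\,dv_g = d^2E(z)(P_zw,\tilde w_z)+\mathcal{E}_1+\mathcal{E}_2,
\qquad \mathcal{E}_1=\int_\Si A(z)(\na z,\na z)\,A(z)(P_zw,\tilde w_z)\,dv_g,
\]
and treat $\mathcal{E}_1$ as a correction to be absorbed. But the coercivity of Lemma \ref{lemma:definite} is a lower bound on $d^2E(z)$, which by \eqref{eq:second-var} already contains the full $-AA$ term; your $\mathcal{E}_1$ is therefore not a remainder but the entire second fundamental form contribution at $z$, with only a single factor of $|w|$ in it. Using $|A(z)(\na z,\na z)|\leqs\rho_z^2$ together with Cauchy--Schwarz one gets $|\mathcal{E}_1|\leq C\int\rho_z^2|w||\tilde w_z|\,dv_g\leq C\norm{w}_z\norm{\tilde w_z}_z\leq C\norm{w}_z^2$ with a \emph{fixed} constant $C$ that has no reason to be smaller than $c_0/2$; it cannot be absorbed into $\tfrac{c_0}{2}\norm{w}_z^2$, and bounding it instead by $\eps_2\norm{\tilde w_z}_z$ turns it into a linear term in $\norm{w}_z$, which is even worse. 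The ``delicate contribution'' $\int|w||\na w|^2|\tilde w_z|\,dv_g$ you identify comes from $\mathcal{E}_2$ and the $A(u)$-correction; $\mathcal{E}_1$ never reduces to it. The paper evades this precisely by writing $(dE(u)-dE(z))(\tilde w_z)=\int_0^1\frac{d}{dt}\bigl(dE(u_t)(\Pt\tilde w_z)\bigr)\,dt$ along the path $u_t=\pi_N(z+tw)$. This produces $d^2E(u_t)$ as the leading object, so the $-AA$ term is internal to the quantity on which coercivity acts, and all the error terms are genuine differences $d^2E(u_t)-d^2E(z)$, each of which carries an extra factor of $|w|$ or $|\na w|$ (see \eqref{est:diff-second-var-proj}) and is therefore $O(\norm{w}_{L^\infty}\norm{w}_z^2)$ up to the tension-controlled integral $\int|\tilde w_z|^2|\na w|^2$. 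Without that path structure, your decomposition generates an uncontrollable bulk term, and the argument does not close.
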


Both for this proof, and in later parts of the proof of Theorem \ref{thm:main}, we consider the maps
\beq
\label{def:ut}
u_t=\pi_N(z+tw) \text{ for } w=u-z \text{ and } t\in [0,1].
\eeq
We note that these maps are well defined if $\eps_2<\de_N$ and will be in $H^2(\Si,N)$ since any map $u\in H^1(\Si,N)$ with $\tau_g(u)\in L^2(\Si)$ is automatically in $H^2(\Si,N)$, see e.g. \cite{Moser, Rupflin-old}. We will also use that  
 $\abs{\na u_t}\leq\abs{\na z}+\abs{\na w}$ and thus that for $v\in H^1(\Si,\R^n)$ 
\beq
\label{est:H1-ut}
\norm{v\abs{\na u_t} }_{L^2(\Si)}^2+
 \norm{P_{u_t}(v)}^2_z\leq C\norm{v}^2_{z}+C\int \abs{\na w}^2\abs{v}^2 \ed v_g.
\eeq
We also remark that for any $s\in[0,1]$ we can write
 $\frac{d}{ds} u_s=d\pi_N(z+sw)(w)=P_{u_s}(w)+\text{err}_s$ for an error term $\text{err}_s\in \Gamma^{H^1}(u_s^*TN)$ that is bounded by 
 \beq
 \label{est:err-new} 
 \abs{\text{err}_s}\leq C\abs{w}^2 \text{ with } \abs{\na\text{err}_s}\leq C\rho_z\abs{w}^2+C\abs{w}\abs{\na w}.
 \eeq Integrating over $s\in[0,1]$ 
and using that also $\abs{(P_{u_t}-P_{u_s})(w)}\leq C\abs{w}^2$ 
 we thus get
\beq
\label{est:w-Pw}
\abs{w-\Pt w}\leq C\abs{w}^2 \text{ while } \abs{\na (w-\Pt w)}\leq C \abs{w}\abs{\na w}+C\abs{w}^2\rho_z
\eeq 
for any $t\in [0,1]$ 
and we will in particular use that
\beq
\label{est:w-Pw-znorm}
\norm{w-\Pt w}_z\leq C\norm{w}_{L^\infty}\norm{w}_z.
\eeq

\begin{proof}[Proof of Lemma \ref{lemma:step-1}]
Let $u$ and $z$ be so that \eqref{est:norm-less-eps} 
is satisfied for a number $\eps_2\in (0,\de_N)$ that is chosen below.
We set $w_z:=P_z(w)$, let $\tilde w_z = (P^{\VV_z^+}-P^{\VV_z^-})(w_z)$ be as in the lemma and note that 
\eqref{est:w-Pw-znorm} implies that  
\beq
\label{est:norms-tilde-w-equiv}
\norm{w-w_z}_{z}\leq C\eps_2 \norm{w}_z \text{ while }\norm{\tilde w_z}_{z}\leq \norm{ w_z}_z\leq C\norm{w}_{z}.\eeq
We can hence combine Lemmas \ref{lemma:definite} and \ref{lemma:almost orthogonal} with \eqref{est:bound-sec-var} to obtain that 
\beqas
d^2E(z)(w_z,\tilde w_z)&=d^2E(z)(P^{\VV_z^+}w_z+P^{\VV_z^-}w_z,
P^{\VV_z^+}w_z-P^{\VV_z^-}w_z)+d^2E(z)(P^{T_z\ZZ}w_z,\tilde w_z)\\
&\geq c_0(\norm{P^{\VV_z^+}w_z}_z^2+\norm{P^{\VV_z^-}w_z}_z^2)-C \norm{P^{T_z\ZZ}w_z}_z\norm{\tilde w_z}_z \\
&\geq c_0(\norm{w_z}_z^2 -\norm{P^{T_z\ZZ}w_z}_z^2)-C\norm{w}_{L^\infty} \norm{w}_z^2 \\
&\geq (c_0(1-C\eps_2^2)-C\eps_2)\norm{w}_z^2 \geq \tfrac{c_0}{2}\norm{w}_z^2
\eeqas
where $c_0>0$ is the constant obtained in Lemma \ref{lemma:definite} and where the last inequality holds after reducing $\eps_2>0$ if necessary.
As
\beqas  
(dE(u) -dE(z))
(\tilde w_z)&=\int_0^1 \frac{d}{dt} \big(dE(u_t)(\tilde w_z)\big) \dd t=\int_0^1 \frac{d}{dt} \big(dE(u_t)(\Pt \tilde w_z)\big) \dd t\\
&= \int_0^1d^2E(u_t)(\tfrac{d}{dt} u_t, \Pt \tilde w_z) +dE(u_t)(\tfrac{d}{dt}\Pt \tilde w_z) \dd t\\
&=\int_0^1  d^2E(u_t)(P_{u_t} w+\text{err}_t, P_{u_t}\tilde w_z)+\int_\Si\na u_t\na (\Pt( \tfrac{d}{dt}\Pt \tilde w_z))\dd v_g 
 \dd t
\eeqas
we thus conclude that
\beqa \label{est:somewhere-in-step-1}
\tfrac{c_0}2 \norm{w}_z^2&\leq  d^2E(z)(w_z,\tilde w_z)\leq (dE(u)-dE(z))( \tilde w_z)+\sup_{[0,1]}T_1+T_2+T_3
\eeqa
for $
T_1:= \abs{d^2E(u_t)(P_{u_t}w,P_{u_t}\tilde w_z)-d^2E(z)(w_z,\tilde w_z)}$, $T_2:
=\int\abs{\na u_t}\abs{\na (\Pt( \tfrac{d}{dt}\Pt \tilde w_z))}$ and $T_3:=\abs{d^2E(u_t)(\text{err}_t, P_{u_t}\tilde w_z)}.$

To bound the first term we apply
\eqref{est:diff-second-var-proj} for $v_1=w$ and $v_2=\tilde w_z$ giving
\beqa
\label{est:proof-step1-2}
T_1&\leq C\int \abs{w}\abs{\na w}\abs{\na  \tilde w_z}+ 
 C\int (\abs{w}\rho_z+\abs{\na w}) (\abs{w}\abs{\na \tilde w_z}+\abs{\na w}\abs{\tilde w_z})\\
&\quad +C\int \abs{ \tilde w_z}\abs{w} (\abs{w}\rho_z^2 +\abs{\na w}\rho_z+\abs{\na w}^2)\\
&\leq  C\norm{w}_{L^\infty} \norm{w}_z^2+C\int \abs{\tilde w_z}\abs{\na w}^2\leq (C\eps_2+\tfrac{c_0}{8}) \norm{w}_z^2 +C\int \abs{\tilde w_z}^2\abs{\na w}^2.
\eeqa
As $\tilde w_z$ is obtained using the non-local projections $P^{\VV^\pm_z}$ we do not have a pointwise bound on $\tilde w_z$. 
Instead we 
use that  
$\abs{\Delta_g w}\leq \abs{\Delta_g u}+\abs{\Delta_g z}\leqs \abs{\tau_g(u)}+\rho_z^2+\abs{\na w}^2
$, compare also \eqref{est:Delta-om-trivial},
to bound 
\beqas
I_1&:= \int \abs{\tilde w_z}^2\abs{\na w}^2= -\int w \abs{\tilde w_z}^2\Delta_g w -2\int (w \na w)\cdot(\tilde w_z  \na \tilde w_z)\\
& \leq C\norm{w}_{L^\infty} \big[
\norm{\tau_g(u)}_{L^2}\norm{\tilde w_z}_{L^4}^2+\norm{w}_{z}^2+I_1\big].
\eeqas 
After possibly reducing $\eps_2>0$ and applying \eqref{est:Lp-by-z-norm} we thus get that
\beq
\label{est:I1}
I_1\leq C\eps_2\big[ \log\la \norm{\tau_g(u)}_{L^2}+1\big]\norm{w}_z^2\eeq
and so obtain from \eqref{est:proof-step1-2} that 
\beqa \label{eq:est-T1-proof}
T_1&\leq (C\eps_2+\tfrac{c_0}8) \norm{w}_z^2+ C\eps_2\log\la\norm{\tau_g(u)}_{L^2} \norm{w}_z^2.
\eeqa 
To bound $T_2$ we write, summing over repeated indices $j$,
\beqa
\label{est:snow}
P_{u_t}(\tfrac{d}{dt}\Pt \tilde w_z)&=P_{u_t}\big(-\tfrac{d}{dt}\big(\langle  \nu^j_{u_t}, \tilde w_z\rangle \nu^j_{u_t}\big)\big)= -\langle  \nu^j_{u_t}, \tilde w_z\rangle P_{u_t}(\tfrac{d}{dt} \nu^j_{u_t})\\
&=
-\langle  \nu^j_{u_t}- \nu^j_{z}, \tilde w_z\rangle P_{u_t}(d\nu^j_{u_t}(P_{u_t}w+\text{err}_t))\eeqa
where we use that $\tilde w_z\in T_zN$ in the last step. We can thus estimate
\beqas
T_2
&\leqs\int \abs{\na u_t}(\rho_z\abs{w}+\abs{\na w}) \abs{w}\abs{\tilde w_z}+\abs{\na u_t} \abs{w}\big(\abs{\na w}\abs{\tilde w_z}+\abs{\na \tilde w_z}\abs{w}\big)\dd v_g\leqs \norm{w}_{L^\infty} \norm{w}_z^2.
\eeqas 
Finally, we can use \eqref{eq:second-var} and  \eqref{est:err-new} to bound also
$$T_3\leqs \norm{\na \text{err}_t}_{L^2}\norm{\na \tilde w_z}_{L^2}+\int \abs{\na u_t}^2 \abs{\text{err}_t} \abs{\tilde w_z}\leqs \norm{w}_{L^\infty}  \norm{w}_z^2 .$$
All in all we thus get that 
$$T_1+T_2+T_3\leq (C\eps_2+\tfrac{c_0}8)\norm{w}_z^2+C\eps_2\log\la\norm{\tau_g(u)}_{L^2}\norm{ w}_z^2.$$ 
Combined with 
 \eqref{est:somewhere-in-step-1} this gives the first claim \eqref{est:step-1} of the lemma provided $\eps_2>0$ is chosen sufficiently small.  

We can then combine \eqref{est:step-1} with the bound on $dE(z)$ obtained in Lemma \ref{lemma:tension} and with \eqref{est:Lp-by-z-norm} to deduce that 
\beqas
\norm{w}_z^2& \leq C\big[\norm{\tau_g(u)}_{L^2} (\log\la)^\half 
+\la^{-2}(\log\la)^\half+\norm{\tau_{g_{S^2}}(\om)}_{L^2(S^2)}\big] 
\norm{\tilde w_z}_z
\\
& 
\qquad +C \norm{\tau_g(u)}_{L^2}^2(\log\la)^2\norm{w}_z
+\norm{w}_z^3.
\eeqas
As we can assume that $\eps_2<\half$, we can absorb the last term into the right hand side and use a final time that $\norm{\tilde w_z}_z\leq C\norm{w}_z$ to obtain the second claim of the lemma. 
\end{proof}

We now want to derive suitable bounds on $\la^{-1}$ and on $ \norm{\tau_{g_{S^2}}(\om)}_{L^2(S^2)}$ in terms of the tension of $u$. 
To this end we will exploit the lower bounds on the variations of the energy in the specific directions of $T_z\ZZ$ obtained in Corollaries \ref{cor:Expansion-scaling} and \ref{cor:expansion-tau}  as well as the bounds on the second variation from Lemma \ref{lemma:tension}, see also Remark \ref{rmk:second-var} .

These results tell us that for $z^{(1)}_\eps:= z_{(1-\eps)\la}^{a,\om}$ 
 \beq
 \label{est:E-pepsz1}
 dE(z)(\peps z^{(1)})\geq  c_1\la^{-2} \text{ and } \norm{d^2 E(z)(\peps z, \cdot)}\leq C\la^{-2}(\log\la)^\half  +C\TTom
 \eeq
 while for the variations $z^{(2)}_\eps:= z_{\la}^{a,\bbeps}$ as considered in Corollary \ref{cor:expansion-tau}
   \beq
 \label{est:E-pepsz2}
 dE(z)(\peps z^{(2)})\geq  \TTom -C\la^{-2} \text{ and } \norm{d^2 E(z)(\peps z, \cdot)}\leq  
\eta_3 
 \eeq
 for a number $\eta_3>0$ that we can still choose and for constants $c_1>0$ and $C<\infty$ that only depend on $N$, $\bbs$ and $(\Si,g)$.

Writing again 
 $u_t=\pi_N(z+tw)$ and $w_z=P_z(w)$ for short, we have that for $i=1,2$ 
\beqa \label{est:step2-first-formula}
dE(z)(\peps z^{(i)})
=dE(u)(\peps z^{(i)})+d^2E(z)(\peps z^{(i)}, w_z)-\int_{0}^1 T_4(t)dt
\eeqa
for 
\beqas
T_4&:= \tfrac{d}{dt}[dE(u_t)(\peps z^{(i)})]-d^2E(z)(w_z,\peps z^{(i)})
\\
&=dE(u_t)(\ddt (P_{u_t}\peps z^{(i)}))+ d^2E(u_t)(\ddt u_t, P_{u_t}\peps z^{(i)})- 
d^2E(z)(P_zw,\peps z^{(i)})\\
&=\int \na u_t \na \big(P_{u_t}(\ddt (P_{u_t}\peps z^{(i)}))\big)+d^2E(u_t)(P_{u_t}w+\text{err}_t,P_{u_t}\peps z^{(i)})-d^2E(z)(P_zw,\peps z^{(i)}).
\eeqas
As  $\abs{\peps z^{(i)}}\leqs 1$ and 
$\abs{\peps \na z^{(i)} }\leqs \rho_z$, we can use
\eqref{est:diff-second-var-proj} to bound 
\beqas
&\babs{d^2E(u_t)( P_{u_t}w,P_{u_t} (\peps z^{(i)})-d^2E(z)(P_{z}w,\peps z^{(i)})}
\leqs \norm{w}_z^2\eeqas
while  \eqref{eq:second-var} and \eqref{est:err-new} ensure that also 
$
\babs{d^2E(u_t)(\text{err}_t,P_{u_t} (\peps z^{(i)}))}
\leqs 
\norm{w}_{z}^2.
$

As
$P_{u_t}\big(\ddt (P_{u_t}(\peps z^{(i)}))=-\sum_j\langle  \nu^j_{u_t}-\nu_z,\peps z^{(i)}\rangle P_{u_t}(d\nu_{u_t}(P_{u_t}w+\text{err}_t))$ we can also bound 
\beqas
\babs{\int \na u_t\na \big(P_{u_t}(\tfrac{d}{dt} P_{u_t}\peps z^{(i)})\big) }
&\leqs \int \big[\rho_z+\abs{\na w}\big]\big[\abs{w}\abs{\peps z^{(i)}}(\abs{\na w}+\rho_z\abs{w})+\abs{w}^2\abs{\peps \na z^{(i)}} \big]\\
&\leqs \norm{w}_z^2
\eeqas
and thus get that $\abs{T_4}\leqs  \norm{w}_z^2$. 

For the variation $z^{(1)}_\eps$ which satisfies \eqref{est:E-pepsz1}, we hence obtain from \eqref{est:step2-first-formula} 
that 
\beqas
c_1\la^{-2} &\leq dE(u)(\peps z^{(1)})+d^2E(z)(\peps z^{(1)}, w_z)+C\norm{w}_{z}^2
\\
&\leq \norm{\tau_g(u)}_{L^2}  \norm{\peps z^{(1)}}_{L^2} +C(\la^{-2}(\log\la)^\half  +\TTom)\norm{w}_z+C\norm{w}_z^2\\
&\leqs  \la^{-1}(\log\la)^{\half}\norm{\tau_g(u)}_{L^2}  + \la^{-4}(\log\la)  +\TTom^2 +\norm{w}_z^2,
\eeqas
where norms are computed over $\Si$ unless stated otherwise and where we use that
$$\norm{\peps z^{(1)}}_{L^2}\leqs  \la^{-1}+\norm{(1+\la\abs{x})^{-1}}_{L^2(\DDr)}\leqs \la^{-1}(\log\la)^\half.$$
Combined with Lemma \ref{lemma:step-1}, and after increasing $\la_1$ if necessary, we hence obtain that 
\beqas\la^{-2}&\leqs \la^{-1}(\log\la)^{\half} \norm{\tau_g(u)}_{L^2}  + \log\la  \norm{\tau_g(u)}_{L^2} ^2 +(\log\la)^4\norm{\tau_g(u)}_{L^2}^4
+\TTom^2 .
\eeqas 
Thus either $\la^{-1}(\log\la)^{-\half}\leq \norm{\tau_g(u)}_{L^2}$ and thus 
$\la^{-1}\leq C\norm{\tau_g(u)}_{L^2}(1+\abs{\log\norm{\tau_g(u)}_{L^2}}^\half)$ or $\la^{-1}\leq C\TTom$, so in any case 
\beq
\label{est:la-by-T}
\la^{-1}\leqs   \TTom+ \norm{\tau_g(u)}_{L^2}(1+\abs{\log\norm{\tau_g(u)}_{L^2}}^\half).
\eeq
On the other hand, applying \eqref{est:step2-first-formula} for the variation $z^{(2)}_\eps$ which satisfies \eqref{est:E-pepsz2} as well as $\norm{\peps z^{(2)}}_{L^2}\leq C$ and using Lemma \ref{lemma:step-1} 
 gives 
\beqas
\TTom&\leqs \la^{-2} +\abs{dE(u)(\peps z^{(2)})}+\abs{d^2E(z)(\peps z^{(2)}, w_z)}+\norm{w}_{z}^2
\\
&\leqs \la^{-2} + \norm{\tau_g(u)}_{L^2}  \norm{\peps z^{(2)}}_{L^2} +\eta_3
\norm{w}_z+\eps_3 \norm{w}_z\\
& \leq \la^{-2} (\log\la)^\half+ \norm{\tau_g(u)}_{L^2} (\log\la)^\half+
(\log\la)^2\norm{\tau_g(u)}_{L^2}^2\\
&\quad  +
 (\eta_3+\eps_3) \TTom .
\eeqas
As we can assume that $\eps_3$ and $\eta_3$ are chosen small enough we thus conclude that 
\beqa
\label{est:T-by-la}
\TTom
&\leqs \la^{-2} (\log\la)^\half+ \norm{\tau_g(u)}_{L^2} (\log\la)^\half +  \norm{\tau_g(u)}_{L^2}^2 (\log\la)^2.
\eeqa
We can thus eliminate $\TTom$ from \eqref{est:la-by-T} and get
\beqs
\la^{-1}\leqs \la^{-2}\log\la+ \norm{\tau_g(u)}_{L^2}\big[ 1+\abs{\log\norm{\tau_g(u)}_{L^2}}^\half+(\log\la)^\half
\big]+\norm{\tau_g(u)}_{L^2}^2(\log\la)^2.
\eeqs
For sufficiently large $\la_1$ we hence obtain our claimed bound   \eqref{claim:lambda} of 
\beq
\label{est:claim-Loj3-proven}
\la^{-1} \leq C \norm{\tau_g(u)}_{L^2(\Si)}\big[ 1+\abs{\log\norm{\tau_g(u)}_{L^2(\Si)}}^\half \big]. 
\eeq
Inserting this back into \eqref{est:T-by-la} and using \eqref{claim:Ck-equiv}
implies that
also 
\beq
\label{est:claim-Loj4-proven}
\norm{\tauS(\om)}_{C^k(S^2)}\leq C \TTom \leq C \norm{\tau_g(u)}_{L^2(\Si)}\big[ 1+\abs{\log\norm{\tau_g(u)}_{L^2(\Si)}}^\half \big] 
\eeq
as asserted in \eqref{claim:tension}. From  Lemma \ref{lemma:step-1}
we hence obtain 
\beq 
\label{est:claim-Loj2-proven}
\dist(u,\ZZ)= \norm{w}_z\leq C \norm{\tau_g(u)}_{L^2(\Si)}\big[ 1+\abs{\log\norm{\tau_g(u)}_{L^2(\Si)}}^\half+(\log \la)^\half \big]. 
\eeq 
We now recall that 
$E(z_{\la}^{a,\om})-E(\om)=O(\la^{-2})$, compare   
Remark \ref{rmk:delta-energy-bubbles}, and that $\abs{E(\om)-E(\hat \om)}$ is controlled by the classical 
\Lojns-Simon inequality \eqref{est:Loj-S2}. Combined with the bound on $dE(z)$ from  Lemma \ref{lemma:tension}
this gives
\beqas
\abs{E(u)-E(\bbs)}&\leq \abs{E(u)-E(z)}+ \abs{E(\om)-E(\hat \om)}+
C\la^{-2}\\
&\leqs \abs{dE(z)(w)}+\int_0^1\abs{dE(z)(w)-dE(u_t)(w+\text{err}_t)}\dd t+\TTom^{\gamma_1} +\la^{-2}\\
&\leqs 
\la^{-2}(\log\la)^\half \norm{w}_z+
\norm{\tauS(\om)}_{C^1(S^2)} \norm{w}_z+
\norm{w}_z^2+\TTom^{\gamma_1}+\la^{-2}
\eeqas
since 
$
\abs{dE(z)(w)-dE(u_t)(w)}=\abs{\int\na z\na (P_zw)-\na u_t\na (P_{u_t}w)\dd v_g}
\leq C\norm{w}_z^2.
$ 

Inserting the bounds \eqref{est:claim-Loj3-proven}, \eqref{est:claim-Loj4-proven} and \eqref{est:claim-Loj2-proven} on 
 $\la$, $\tauS(\om)$ and $\norm{w}_z$
 into this estimate hence yields 
\beqa \label{est:claim-Loj3-proven-new}
\abs{E(\hat \omega)-E(u)}\leqs  \norm{\tau_g(u)}_{L^2(\Si,g)}^{\gamma_1} (1+\abs{\log{\norm{\tau_g(u)}_{L^2(\Si,g)}}}^\half)^{\gamma_1}+ \log\la \norm{\tau_g(u)}_{L^2(\Si,g)}^2.
 \eeqa
 These estimates  \eqref{est:claim-Loj2-proven} and \eqref{est:claim-Loj3-proven-new} immediately yield the claimed estimates \eqref{claim:Loj-1} and \eqref{claim:Loj-2} in the generic case where $\log\la\leq C\log \norm{\tau_g(u)}_{L^2(\Si,g)}$ for some fixed $C$, and hence in particular whenever $\la^{-1}\geq \norm{\tau_g(u)}_{L^2(\Si,g)}^8$. Conversely, in the special case where 
 $\la^{-1}\leq  \norm{\tau_g(u)}_{L^2(\Si,g)}^8$ the behaviour of the map on the bubble region and on the bulk are essentially decoupled, allowing us to obtain improved bounds of 
 \beqa \label{est:claim-Loj-improved}
\abs{E(\hat \omega)-E(u)} + \dist(u,\ZZ)^2\leq C \norm{\tau_g(u)}_{L^2(\Si,g)}^2 
 \eeqa
 through a simple cut-off argument that is carried out in Appendix \ref{appendix:conc}.
 
\section{Proofs of Theorem \ref{thm:bubbling-version} and Corollary \ref{thm:energy-spectrum}}
 \label{section:proofs-bubble-general}
 
 \begin{proof}[Proof of Theorem \ref{thm:bubbling-version}]
Let $(u_n)$ be a sequence of almost harmonic maps which converges to a simple bubble tree as described in the introduction. We let $\la_n$, $a_n$ be parameters so that \eqref{eq:strong-conv-bubble-tree} holds. From the definition of the adapted bubbles we hence obtain that
$$\norm{u_n-z_{\la_n}^{a_n,\bbs}}_{L^\infty(\Si,g)}+\norm{\na(u_n-z_{\la_n}^{a_n, \bbs})}_{L^2(\Si,g)}\to 0 \text{ as } n\to \infty$$
where we work on a fixed fundamental domain of $\Si$ and use Euclidean respectively hyperbolic translations to the origin to get a consistent choice of coordinates $F_{a_n}$ in the definition of the adapted bubbles. 
For sufficiently large $n$ we can thus apply Theorem \ref{thm:main}. This immediately yields the claim  \eqref{claim:Loj-2-bubbling-thm} on the energy $E(u_n)$. It also implies that the bubble scale $\tilde \la_n$ of 
elements $z_n=z_{\tilde \la_n}^{\tilde a_n,\tilde \om_n}\in \ZZ$ which minimise $\tilde z\mapsto \norm{u_n-\tilde z}_z$ is controlled by \eqref{claim:lambda}. As  Lemma \ref{lemma:different-scales} implies that the originally chosen $\la_n$ are so that $\la_n\in [\half\tilde \la_n,2\tilde \la_n]$ we also get the same bound on $\la_n$ and for the rest of the proof we can assume that $\la_n=\tilde \la_n$ and $a_n=\tilde a_n$.

If $\tilde \om_n$ is harmonic, which will always be the case in the integrable setting, we can simply set $\om_n=\tilde\om_n$. 
Otherwise we use that the classical \Lojns-Simon inequality \eqref{est:Loj-S2-dist} implies that there exists a harmonic map $\om_n:S^2\to S^2$ which is $C^k$ close to $\bbs$ so that
\beq
\label{est:om-tilde-om}
\norm{\om_n-\tilde \om_n}_{L^2(S^2)}\leq C\norm{\tau(\tilde \om_n)}_{L^2(S^2)}^{\gamma_2} \leq C \TTn^{\,\gamma_2} \abs{\log{\TTn}}^{\frac{\gamma_2}2}.
\eeq
We note that the same type of estimate also holds for $\norm{\om_n-\tilde \om_n}_{C^1(S^2)}$ since both $\om_n$ and $\tilde\om_n$ are elements of $\HHone$. 
We can also use that
\beqas
\norm{\tilde \omega_n \circ \pi_{\la_n}\circ F_{ a_n}-z_n}_{C^1(B_\iota(a_n))} +\norm{z_n-\tilde \om_n(p^*)}_{C^1(\Si\setminus B_\iota(a_n))}
&\leqs \la_n^{-1} 
,\eeqas
compare \eqref{est:v-hat-u-ball} and \eqref{est:v-away}. 
Combining this with  
\eqref{claim:Loj-1} and the already established bound \eqref{claim:la-bubbling-thm} on the bubble scale
we get that for $r_1<\iota$ and sufficiently large $n$
\beqa
&\norm{\na \big( u_n-\omega_n \circ \pi_{\la_n}\circ F_{a_n}\big)}_{L^2(B_{r_1}(a))}+\norm{\na u_n}_{L^2(\Si\setminus B_{r_1}(a))}\\
 &\qquad\leq C\norm{\om_n-\tilde \om_n}_{C^1(S^2)}+C\la_n^{-1} +\norm{u_n-z_n}_{z_n}
\leq C\TTn^{\,\gamma_2} \abs{\log\TTn}^{\frac{\gamma_2} 2}
\eeqa
for the same exponent $\gamma_2\in (0,1]$ for which \eqref{est:Loj-S2-dist} holds. 

To establish the $L^2$-estimate \eqref{claim:L2-est-thm-1} 
we note that 
\beqs
\norm{z_n-\tilde \om_n(p^*)}_{L^2(\Si)}\leqs 
\la_n^{-1}+\norm{(1+\la_n\abs{x})^{-1}}_{L^2(\DD_{r_0})}\leqs 
 \la_n^{-1}(\log \la_n)^{\half}\leqs  \TTn (\log\TTn),
 \eeqs
 compare \eqref{est:norms-z} and \eqref{claim:la-bubbling-thm}. 
Combined with \eqref{est:om-tilde-om} this gives 
\beqs
\norm{z_n-\om_n(p^*)}_{L^2(\Si)}\leqs  \TTn \abs{\log\TTn}+\norm{\om_n-\tilde \om_n}_{C^0}\leqs  \TTn \abs{\log\TTn}+\TTn^{\,\gamma_2} \abs{\log\TTn}^{\frac {\gamma_2}2}.
\eeqs
Finally, 
 \eqref{claim:L2-est-thm-2} follows from \eqref{claim:Loj-2} and \eqref{est:om-tilde-om} since we have a lower bound  of $\rho_{\la_n}\geq c_\La  \la_n$, $c_\Lambda>0$,  on discs $\DD_{\La\tilde \la_n^{-1}}$.
\end{proof}

\begin{proof}[Proof of Corollary \ref{thm:energy-spectrum}]
Let $N$ be an analytic manifold of any dimension, let $(\Si,g)$ be a closed surface of genus at least $1$ and suppose that there exists an accumulation point  $\bar E<E^*=\min(E_{S^2}^*,2E_{S^2}, E_{(\Si,g)}+E_{S^2})$ of the 
energy spectrum.
Thus there are harmonic maps $u_i:\Si\to N$ with $E(u_i)\neq E(u_j)$ for $i\neq j$ and $E(u_i)\to\bar E$. We note that the maps $u_i$ cannot subconverge smoothly to 
 a harmonic map $u_\infty:\Si\to N$ as Simon's \Loj estimate ensures that all harmonic maps in a neighbourhood of $u_\infty$ have the same energy. 
 Thus the sequence must undergo bubbling: As 
each bubble requires energy of at least  $E_{S^2}$ and as $\bar E<2E_{S^2}$ 
 the corresponding bubble tree cannot contain multiple bubbles. As 
 $\bar E$ is also less than $E_{S^2}+E_{(\Si,g)}$, the base map must furthermore be trivial. Finally the assumption that $\bar E<E_{S^2}^*$ ensures that the bubble $\bbs$ is not branched. 
 We are hence in the setting of Theorem \ref{thm:bubbling-version} and the resulting estimate \eqref{claim:Loj-2-bubbling-thm} implies that $E(u_i)=E(\om)$ for sufficiently large $i$ 
 leading to a contradiction. 
\end{proof}
  
\section{Convergence of harmonic map flow} \label{sect:flow}
\begin{proof}[Proof of Theorem \ref{thm:flow}]
Let $u$ be a solution of the harmonic map flow \eqref{eq:HMF} as considered in Theorem \ref{thm:flow}. 
If there is any sequence $t_n\to \infty$ along which the flow converges strongly in $H^1$ to a (potentially trivial) harmonic map $u_\infty:\Si\to N$ then Simon's results from \cite{Simon}  imply that the flow converges indeed along all $t\to \infty$ to $u_\infty$.

We can thus assume that for every sequence $t_n\to \infty$ with $\norm{\tau_g(u(t_n))}_{L^2}\to 0$ a subsequence of $(u(t_n))$ converges to a non-trivial bubble tree. As the flow is not constant, and thus $E(u(t))<E(u(0))\leq E^*$, we can argue as 
 in the proof of Corollary \ref{thm:energy-spectrum} to conclude that these bubble trees, which might depend on the chosen subsequence, 
  are all simple and that the obtained bubbles $\om$ are all unbranched and have energy $E(\om)=E_\infty:=\lim_{t\to \infty}E(t)$.
  
It is convenient to choose the rescalings in this convergence to a bubble tree to be around centres $a(t)$ and at scales $\la(t)$ which are chosen so that 
$$E(u(t), F_{a(t)}^{-1}(\DD_{\la(t)^{-1}})=
\sup_{a\in\Si}E(u(t), F_{a}^{-1}(\DD_{\la(t)^{-1}}))=\thalf E_{S^2},$$
as this ensures that the obtained bubbles  are contained in a 
compact subset 
$K\subset H^2(\Si,N)$ of harmonic spheres: Indeed the upper bound of $\half E_{S^2}$ on the energy of the maps  $u(t_n)\circ(\pi_{\la(t_n)}\circ F_{a(t_n)})^{-1}:S^2\to N$ on balls with fixed radius gives such an upper bound also for the bubbles, which in turn makes it impossible for a 
sequence of such bubbles $\om_n$ to undergo bubbling itself.

As Theorem \ref{thm:main} is applicable on a suitable $H^1\cap L^\infty$ neighbourhood of each $\bbs\in K$, we can consider a finite cover of $K$ by such neighbourhoods to deduce that there 
exist $\eps, \bar\la, C>0$ and $\gamma_1>1$ so that the \Lojns-estimate 
\beq
\label{est:Loj-for-flow} 
\abs{E(u)-E_\infty}\leq C\norm{\tau_g(u)}_{L^2(\Si, g)}^{\gamma_1} (1+\abs{\log\norm{\tau_g(u)}_{L^2(\Si, g)}}) ^{\frac{\gamma_1}{2}}
\eeq
holds true for every 
$u\in H^1(\Si, N)$ for which there exists $\bbs\in K$, $a\in \Si$ and $\la\geq \bar\la$ with
\beq\label{ass:u-close-flow-proof}
\norm{u-z_{\la}^{a,\hat \om}}_{H^1(\Si,g)}+\norm{u-z_{\la}^{a,\hat \om}}_{L^\infty(\Si,g)}<\eps.\eeq
We note that 
there exists $\de_0>0$ and $T\geq 0$ so that 
\eqref{ass:u-close-flow-proof}, and hence \eqref{est:Loj-for-flow}, holds true for all
$u(t)$ with $t\geq T$ and  $\norm{\tau_g(u(t))}_{L^2}<\de_0$; indeed otherwise there would be $t_n\to \infty$ with $\norm{\tau(u(t_n))}_{L^2}\to 0$ for which $(u(t_n))$ does not have a subsequence converging to a simple bubble tree.

As \eqref{est:Loj-for-flow} is trivially true if  $\norm{\tau_g(u(t))}_{L^2}\geq \de_0$ (after increasing $C$ if necessary) and as we can assume that 
$E(T)-E_\infty\leq\half$ we thus conclude that 
$\Ed(t):=E(u(t))-E_\infty$ satisfies
 \beq \label{est:energy-flow-first-est}
0\leq \Ed(t)^{\frac{2}{\gamma_1}} \abs{\log\Ed(t)}^{-1}\leq C_0 \norm{\tau_g(u(t))}_{L^2(\Si)}^2 
\eeq
for $t\geq T$ and some $C_0>0$ 
and thus 
\beqa\label{est:energy-flow-first-est-2}
-\tfrac{d}{dt}\Ed(t)&=\norm{\tau_g(u(t))}_{L^2(\Si)}^2\geq C_0^{-1}\Ed(t)^{\frac{2}{\gamma_1}} \abs{\log\Ed(t)}^{-1} .\eeqa
We can now proceed as in \cite{Simon} and \cite{Topping-quantisation} to establish the claimed convergence of the flow.

If $\gamma_1=2$ then \eqref{est:energy-flow-first-est-2} implies that 
$$\big(\log\Ed(t)\big)^2\geq 2C_0^{-1}(t-T)+\big(\log\Ed(T)\big)^2 \text{ for } t\geq T,$$
which allows us to conclude that
$E(t)-E_\infty\leq Ce^{-c_1\sqrt{t}}.$

If $\gamma_1\in (1,2)$ then the above estimate implies that $\psi:= \Ed^{-\frac{2-\gamma_1}{\gamma_1}}$ satisfies 
$$\ddt \psi=\tfrac{2-\gamma_1}{\gamma_1} \Ed^{-\frac{2}{\gamma_1}} \norm{\tau_g(u(t))}_{L^2(\Si)}^2 \geq \tfrac{2-\gamma_1}{C_0\gamma_1}  \abs{\log\Ed}^{-1}\geq c (\log\psi)^{-1}
$$
 so we conclude that 
$\psi(t)(\log\psi(t)-1)\geq c (t-T)-\psi(T)(\log\psi(T)-1).$ The resulting bound of 
$\psi(t)\geq \tilde c t(\log t)^{-1}$  for some $\tilde c>0$ 
then gives the claimed bound \eqref{claim:delta-energy-flow-sphere-degenerate}
on the decay of the energy.

Given any $0<\al <\frac{\gamma_1-1}{\gamma_1} $ we now
fix $\be\in (\al, \frac{\gamma_1-1}{\gamma_1})$ and  note that   \eqref{est:energy-flow-first-est} gives 
$$\Ed(t)^{\beta-1}\norm{\tau_g(u)}_{L^2}\geq \Ed(t)^{-( \frac{\gamma_1-1}{\gamma_1}-\be)}\abs{\log\Ed(t)}^{-\half}\geq 1$$ for sufficiently large $t$. 
We hence obtain that 
$$-\frac{d}{dt}\Ed(t)^{\beta}=\beta \Ed^{\beta-1}\norm{\tau_g(u)}_{L^2(\Si)}^2\geq  \beta \norm{\tau_g(u)}_{L^2(\Si)}$$
allowing us to conclude that for sufficiently large 
$t<\tilde t$ 
\beq
\label{est:L2-conv-proof-flow}
\norm{u(t)-u(\tilde t)}_{L^2}
\leq \int_t^{\tilde t} \norm{\tau_g(u(s))}_{L^2}\dd s\leq 
C \Ed(t)^{\beta}.
\eeq
We now fix a sequence $t_n\to \infty$ for which $u(t_n)$ converges to a simple bubble tree and denote by $a\in \Si$ the point at which the corresponding bubble $\om$ forms.

Applying the above estimate for 
$\tilde t=t_n$ and using that $\norm{u(t_n)-\om(p^*)}_{L^2}\to 0$ we get
\beq 
\norm{u(t)-\om(p^*)}_{L^2}\leq C \Ed(t)^{\be}\label{est:L2-proof-flow}
\eeq for all sufficiently large $t$, which in particular implies \eqref{est:L^2-est-flow}.

To show that 
 the point $a$ where the bubble forms is independent of the chosen sequence and that the maps converge 
in $C^k$ away from $a$, we can now follow the argument of \cite{Topping-quantisation} and
 combine \eqref{est:energy-flow-first-est-2} with estimates on the evolution of the energy on fixed size balls as proven in 
 \cite[Lemma 3.3]{Topping-quantisation}
 and the $C^k$ control 
on regions with low energy obtained in \cite{Struwe1985}. 

To be more precise, 
\cite[Lemma 3.10']{Struwe1985}, see also \cite[Lemma 3.2]{Topping-quantisation}, assures that there exists $\eps_1=\eps_1(N)>0$ so that for any 
$\Om\subset \Si$, $r\in (0,\inj(\Si,g))$ and $k\in \N$ there exists a constant $C$ so that following holds true:
For any solution $u$ 
 of \eqref{eq:HMF} which satisfies 
$$\sup_{(x,t)\in \Om\times [t_0,\infty)} E(u(t),B_r(x))<\eps_1$$
we can bound $\norm{u(t)}_{C^k( \Om_{\frac{r}{2}})}\leq C$ for  $t\geq t_0+1$ and $\Om_{\frac{r}{2}}:=\{x\in\Si: \dist(x,\Om)\leq \frac{r}2\}$.  

Let now  $\Om$ be a fixed compact subset of $\Si\setminus \{a\}$. 
As  $u(t_n)\to \om(p^*)$ strongly in $H_{loc}^1(\Si\setminus \{a\})$ along the particular sequence of times $t_n\to \infty$ chosen above, we can choose 
 $r>0$  so that 
 $\sup_{x\in \Om} E(u(t_n),B_{2r}(x))\leq \thalf \eps_1$ for all $n$. 
Lemma 3.3 of 
 \cite{Topping-quantisation} and \eqref{est:L2-conv-proof-flow} then allow us to bound 
$$
E(u(t),B_r(x)) \leq  E(u(t_n),B_{2r}(x)) + Cr^{-1}\int_{t_n}^t\norm{\tau(u(s))}_{L^2} ds\leq \thalf\eps_1+C \Ed(t)^\beta \leq \eps_1
$$
for all  $x\in \Om$ and $t\geq t_n$ for sufficiently large $n$. 

Similarly to the argument in \cite{Topping-quantisation} we then combine the
resulting uniform bounds on $\norm{u(t)}_{C^l(\Om_{\frac{r}{2}})}$, $l\in \N$, from \cite{Struwe1985} mentioned above, with the $L^2$-convergence of the flow using an interpolation argument: To this end we recall the standard interpolation inequality 
\beq
\label{est:interpol-inequ}
\norm{f}_{H^s(\Om)}\leq C \norm{f}_{H^{s-m_1}(\Om_{\frac{r}{2}})}^{\frac{m_2}{m_1+m_2}}\norm{f}_{H^{s+m_2}(\Om_{\frac{r}{2}})}^{\frac{m_1}{m_1+m_2}},  \quad C=C(\Om,r,m_{1,2},s)
\eeq
which 
holds for all $m_{1,2}\in \N$ with $m_1\leq s$ and 
follows inductively from integration by parts. 
Given any $k\in \N$ and any $\de>0$ we can apply this inequality for $m_1=s=k+2$ and $m_2=m_2(k,\de)$ sufficiently large to conclude that, for 
$l=k+2+m_2$,
$$\norm{f}_{H^{k+2}(\Om)}\leq C \norm{f}_{L^2(\Om_{\frac{r}{2}})}^{1-\de} \norm{f}_{H^{l}(\Om_{\frac{r}{2}})}^{\de}\leq C \norm{f}_{L^2(\Si)}^{1-\de} \norm{f}_{C^{l}(\Om_{\frac{r}{2}})}^{\de}.$$
Choosing $\de>0$ so that $(1-\de)\beta\geq \al$ and combining this with \eqref{est:L2-proof-flow} and the uniform $C^l$ bounds on $u(t)$ on $\Om_{\frac{r}2}$ hence gives the final claim of the theorem that 
\beqas 
\norm{u(t)-\om(p^*)}_{C^k(\Om)}&\leq C\norm{u(t)-\om(p^*)}_{H^{k+2}(\Om)}\leq C\norm{u(t)-\om(p^*)}_{L^2(\Si)}^{1-\de}\leq C\Ed(t)^{(1-\de)\beta}\\
&\leq C \Ed(t)^\al.
\eeqas
\end{proof}
\begin{proof}[Proof of Corollary \ref{cor:torus}]
As \cite{Eells-Wood} excludes the existence of a harmonic map of degree $\pm1$ from a torus $(T^2,g)$ to $S^2$ any solution $u(t)$ of 
 harmonic map flow as considered in the Corollary will be non-constant, so have $E(u(t))<12\pi$ for $t>0$, and will need to become singular. Indeed we claim that the constraint on the energy means that 
 a single bubble must form, be it at finite or infinite time, and that this bubble must have the same degree as the original map. 
Indeed, the formation of either two bubbles with degree $\pm 1$ or of a bubble of higher degree would only leave energy less than $4\pi$ but at the same time would leave us with a limiting body map of a non-zero degree which is impossible. 
Similarly, the formation of more than two bubbles or of two bubbles which do not both have degree $\pm 1$ would require initial energy greater than $12\pi$ so is also excluded. Finally, if the degree of the bubble and the degree of the map did not agree then we would end up with a body map of degree $k$ with $\abs{k}\geq 2$ which would need energy at least $8\pi$ leading again to a contradiction. 

If the bubble forms at finite time then Simon's result \cite{Simon} yields exponential convergence to a constant. Conversely, if the singularity forms at infinite time then the proof of Theorem \ref{thm:flow} applies and yields convergence at a rate of $O(e^{-c\sqrt{t}})$ since all Jacobi fields along harmonic maps from $S^2$ to $S^2$ are integrable, see \cite{Gulliver-White}.
\end{proof}

\appendix

\section{Definition of $\HHz(\bbs)$ based on Simon's construction from \cite{Simon}}
\label{sec:Simon}
Here we recall the key elements of the argument of Simon that we need to define the manifold $\HHz(\bbs)$ and to check that it has the properties stated in Lemma \ref{lemma:Simon}. 

Let $\bbs:S^2\to S^2$ be any harmonic sphere and let 
$L:=L_{\bbs}$ be the Jacobi operator along $\bbs$ defined in \eqref{def:Jacobi-2}, where here and in the following we work with the $L^2$ inner product and consider $L$ as an operator on maps $w:S^2\to \R^N$ which are tangential to $N$ along $\bbs$.

The starting point of Simon's argument is that since $L:=L_{\bbs}$ is a self-adjoint Fredholm operator, the linear equation 
$Lu=f$ has a unique solution $u\in \ker(L)^{\perp}$ 
if and only if $f\in \ker(L)^\perp$
and this solution furthermore satisfies 
$\norm{u}_{C^{k+2,\beta}}\leq C_{k,\beta} \norm{f}_{C^{k,\beta}}$ for any  $k\in\N$, $\beta>0$. 

As explained in \cite{Simon} these properties of $L$ ensure that 
$\NN:C^{k+2,\beta}\to C^{k,\beta}$ defined by 
$$\NN(w):= P^{\ker L} (w)+P_{\bbs}(\tau_{g_{S^2}}(\pi_N(\bbs+w)))$$
is so that the inverse function theorem  yields a map $\Psi:\UU_1\subset C^{k,\beta}\to \UU_2\subset C^{k+2,\beta} $ 
between suitable neighbourhoods $\UU_{1,2}$ of $0$ in $\Gamma^{C^{k,\beta}}(\bbs^*TN)$ and  $\Gamma^{C^{k+2,\beta}}(\bbs^*TN)$
 so that  
$\norm{\Psi(f)-\Psi(g)}_{C^{k+2,\beta}}\leqs \norm{f-g}_{C^{k,\beta}}$
and
$$\NN\circ \Psi =\Id_{\UU_1} \text{ and } \Psi\circ \NN=\Id_{\UU_2}.$$
As 
$P^{\ker(L)^\perp}(\NN(v))=P^{\ker(L)^\perp}(P_{\bbs}(\tau_{g_{S^2}}(\pi_N(\bbs+v)))$ we have
$$
P^{\ker(L)^\perp}(P_{\bbs}(\tau_{g_{S^2}}(\pi_N(\bbs+\Psi(w)))=P^{\ker(L)^\perp}(\NN\circ \Psi (w))=P^{\ker(L)^\perp}w.$$
This not only implies that the finite dimensional manifold
$$\HH(\bbs):=\{\pi_N(\bbs+\Psi(w)): w\in \ker L\cap \UU_1\}$$
contains all harmonic maps which are sufficiently close to $\bbs$, but also that
\beq
\label{eq:grad-on-mf}
P_{\bbs}(\tauS(\om))\in \ker(L)
\text{ for every } \om\in \HH(\bbs).
\eeq
From the equivalence of norms on the finite dimensional space $\ker(L)$ and the fact that $\norm{\om-\bbs}_{C^{k+2,\beta}}$ will be small if we work on suitably small neighbourhoods $\UU_{1,2}$ we hence get that for $\om\in \HH(\bbs)$
\beq
\label{est:tension-C1-equiv}
\norm{\tau_{g_{S^2}}(\om)}_{C^k}\leq C \norm{P_{\bbs}\tau_{g_{S^2}}(\om)}_{C^k}
\leq C \norm{P_{\bbs}\tau_{g_{S^2}}(\om)}_{L^2} \leq  C\norm{\tau_{g_{S^2}}(\om)}_{L^2}.\eeq
Let now $\VV_0(\bbs)\subset \ker(L)$ be so that 
$$\ker(L)= \VV_0(\bbs)\oplus \VV_{\text{M\"ob}}(\bbs) \text{  for }\VV_{\text{M\"ob}}(\bbs):=  T_{\bbs}\{\bbs\circ M: M \in \Mob\}$$ 
splits $L^2$ orthogonally and set  
\beq
\label{def:HHz-deg}
\HHz(\bbs):=\{\pi_N(\bbs+\Psi(w)): w\in \VVzero(\bbs)\cap \UU_1\}.
\eeq
This  codimension $6$ 
submanifold  of $\HH(\bbs)\subset C^{k+2,\beta}$ 
clearly satisfies \eqref{claim:split-deg} while \eqref{claim:Ck-equiv} follows from \eqref{est:tension-C1-equiv}.  

Furthermore, for any $\bb=\pi_N(\bbs+\Psi(w_\om))\in \HHz(\bbs)$ with $\tau_{g_{S^2}}(\bb)\neq 0$ we can
split 
$$P_{\bbs}(\tau_{g_{S^2}}(\bb))=P^{\VVzero(\bbs)}(P_{\bbs}\tau_{g_{S^2}}(\bb))+P^{\VVMob(\bbs)}(P_{\bbs}\tau_{g_{S^2}}(\bb))\in \ker(L),
$$ set $\TT:= \norm{\tau_{g_{S^2}}(\bb)}_{L^2}$ and consider (for $\eps$ near $0$)
$$\bbeps:= \pi_N\big(\bbs+\Psi( w_\om- \tfrac{2\eps}{\TT}P^{\VVzero(\bbs)}(P_{\bbs}\tau_{g_{S^2}}(\bb))))\in \HHz(\bbs).$$
 The equivalence of norms on $\ker(L)$ implies that at $\eps=0$
$$\norm{\peps\bb^{(\eps)} }_{C^{k+2,\beta}}\leq C \TT^{-1}\norm{ P^{\VVzero(\bbs)}(P_{\bbs}\tau_{g_{S^2}}(\bb))}_{C^{k,\beta}}
\leq C\TT^{-1}\norm{ P^{\VVzero(\bbs)}(P_{\bbs}\tau_{g_{S^2}}(\bb))}_{L^2}\leq C$$ 
for a constant $C$ that only depends on $\bbs$ and $k$.

The conformal invariance of the energy ensures that 
 $\tau_{g_{S^2}}(\om)$ is $L^2$-orthogonal to $\VVMob(\bb)=T_{\om}\{\om\circ M, M\in \Mob\}$. As $d\Psi(0)\vert_{\ker(L)}=\Id$ we can thus bound 
\beqas
\norm{\tfrac{2}{\TT}\tau_{g_{S^2}}(\bb) +\peps \om^{(\eps)}}_{L^2}
&
=\tfrac{2}{\TT}\norm{P^{\VVzero(\om)}\tau_{g_{S^2}}(\bb)-d\pi_N(\om)(d\Psi(w_\om)(P^{\VVzero(\bbs)}(P_{\bbs}\tauS(\om)))}_{L^2}\\
&\leq C(\norm{\bbs-\om}_{C^1}+\norm{w_\om}_{C^{k,\beta}})\leq 1
\eeqas
provided the neighbourhoods $\UU_{1,2}$ are chosen sufficiently small. 

As a consequence we obtain that 
$$\tfrac{d}{d\eps}E(\bbeps)=-\langle \tau_{g_{S^2}}(\om),\peps \om^{(\eps)}\rangle_{L^2}\geq 
\tfrac{2}{\TT} \norm{\tau_{g_{S^2}}(\om)}_{L^2}^2- \norm{\tau_{g_{S^2}}(\om)}_{L^2}
=
\norm{\tau_{g_{S^2}}(\bb))}_{L^2},
$$
which establishes the final property of the manifold $\HHz(\bbs)$ claimed in Lemma \ref{lemma:Simon}.

\section{Proofs of technical lemmas} \label{appendix:technical}
In this appendix we give the proofs of the auxiliary Lemma \ref{lemma:proj-on} used in the proof of Lemma \ref{lemma:definite}, of the auxiliary Lemma \ref{lemma:different-scales} and of the estimate \eqref{est:var-weight-main} used in the proof of Theorem \ref{thm:main} and of the estimate \eqref{est:MV} that we used throughout the paper.

To prove Lemma \ref{lemma:different-scales} we first show the analogue statement for the maps $\hat z_{\la}^{b,\om}:= \om\circ M_\la^{b}
:S^2 \to N$ where
$ M_{\la}^b(x)=\pi_\la(\pi^{-1}(\cdot)-b)$.

\begin{lemma} \label{lemma:est-different-scales-sphere}
Given any  harmonic sphere $\bbs$ 
and any 
 $\si_1>0$, there exists $\eps_3>0$  so that
\beq \label{est:lower-dist-S2}
\norm{\zbone-\hat z_{\tilde \la}^{\tilde b, \tilde \om}}_{\la,b}\geq  2\eps_3 
\eeq
whenever  $\om,\tilde \om\in \HH_1^{\si_1}(\bbs)$ and $\la,\tilde \la>0$ are  either so that
$\la^{-1}\tilde \la\notin [\half,2]$ or so that $\om\in \HH_1^{\frac13\si_1}(\bbs)$ 
while $\tilde \om\in \HH_1^{\si_1}(\bbs)\setminus \HH_1^{\frac23\si_1}(\bbs)$.

Furthermore,  
given any $\eps>0$ there exists $\de>0$ so that
\beq \label{est:A3}
\norm{\zbone-\zbtwo}_{L^\infty(S^2)}<\tfrac14\eps \text{ whenever }
\norm{ \zbone- \zbtwo}_{\la,b}
<2\de.\eeq
\end{lemma}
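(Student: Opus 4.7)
The plan is to prove both claims by compactness/contradiction on $S^2$, using three ingredients: (i) the norm $\|\cdot\|_{\la,b}$, pulled back via the Möbius transformation $M_\la^b$, is equivalent to the $H^1(S^2)$ norm of the pullback, since the gradient part is conformally invariant and $\rho_\la^2\,dx$ agrees up to a bounded factor with $(\pi_\la)_* dv_{g_{S^2}}$; (ii) $\HH_1^{\si_1}(\bbs)$ is a closed bounded subset of a finite-dimensional manifold, hence compact in $C^{k+2,\beta}(S^2,N)$; and (iii) by Lemma \ref{lemma:Simon}, $\HHz(\bbs)$ is transversal to the $\Mob$-orbit at $\bbs$, so that for $\si_1$ small the parameters in the decomposition $\om=\bb\circ R$ with $\bb\in\HH_0^{\si_1}(\bbs)$ and $R\in SO(3)$, $|p^*-Rp^*|\leq\si_1$, are essentially unique.

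For \eqref{est:lower-dist-S2}, I will argue by contradiction: assume sequences $(\la_n,\tilde\la_n,b_n,\tilde b_n,\om_n,\tilde\om_n)$ satisfy the separation hypotheses but $\|\hat z_{\la_n}^{b_n,\om_n}-\hat z_{\tilde\la_n}^{\tilde b_n,\tilde\om_n}\|_{\la_n,b_n}\to 0$. By compactness, along a subsequence $\om_n\to\om_\infty$ and $\tilde\om_n\to\tilde\om_\infty$ in $C^{k+2,\beta}$. Setting $T_n:=(M_{\la_n}^{b_n})^{-1}\circ M_{\tilde\la_n}^{\tilde b_n}\in\Mob$, a direct computation gives $T_n=M_{\tilde\la_n/\la_n}^{c_n}$ for an explicit $c_n$, and ingredient (i) rewrites the hypothesis as $\|\om_n-\tilde\om_n\circ T_n\|_{H^1(S^2)}\to 0$. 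If $T_n$ were to leave every compact subset of $\Mob$, a subsequence would concentrate off a single removable point, so $\tilde\om_n\circ T_n$ would converge weakly in $H^1$ to a constant, forcing the nontrivial limit $\om_\infty$ to itself be constant, a contradiction for $\si_1$ small. Hence $T_n\to T_\infty\in\Mob$ along a subsequence and $\om_\infty=\tilde\om_\infty\circ T_\infty$.

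Writing $\om_\infty=\bb_1\circ R_1$, $\tilde\om_\infty=\bb_2\circ R_2$ with $\bb_j\in\HH_0^{\si_1}(\bbs)$ and $R_j\in SO(3)$, the equation rearranges to $\bb_1=\bb_2\circ(R_2 T_\infty R_1^{-1})$. Since rotations preserve the scaling component, the scaling part of $R_2 T_\infty R_1^{-1}$ equals that of $T_\infty$, which is the limit of $\tilde\la_n/\la_n$. In Case A this scaling lies outside $[\half,2]$, so $\bbs\circ(R_2 T_\infty R_1^{-1})$ is not close to $\bbs$ in $C^{k+2,\beta}$; the transversality in (iii) then forces $\bb_1$ to be far from $\bbs$, contradicting $\bb_1\in\HH_0^{\si_1}(\bbs)$ for $\si_1$ small. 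In Case B, transversality forces $R_2 T_\infty R_1^{-1}$ close to the identity and $\bb_1\approx\bb_2$, so $\om_\infty=\tilde\om_\infty$; the uniqueness and continuity of the parameter decomposition $\om=\bb\circ R$ then gives $\tilde\om_n\in\HH_1^{2\si_1/3}(\bbs)$ for $n$ large, contradicting the hypothesis $\tilde\om_n\in\HH_1^{\si_1}(\bbs)\setminus\HH_1^{2\si_1/3}(\bbs)$.

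Claim \eqref{est:A3} follows by a parallel contradiction: if $\|\hat z_{\la_n}^{b_n,\om_n}-\hat z_{\tilde\la_n}^{\tilde b_n,\tilde\om_n}\|_{\la_n,b_n}\to 0$ but $\|\hat z_{\la_n}^{b_n,\om_n}-\hat z_{\tilde\la_n}^{\tilde b_n,\tilde\om_n}\|_{L^\infty(S^2)}\geq\eps/4$, the same analysis (without any separation, so transversality forces $T_\infty=\Id$ and $\om_\infty=\tilde\om_\infty$) combined with the uniform $C^1$ bounds on the family $\{\om\circ M:\om\in\HH_1^{\si_1}(\bbs),\,M\text{ in a bounded subset of }\Mob\}$ yields $C^0$-convergence $\hat z_{\la_n}^{b_n,\om_n}-\hat z_{\tilde\la_n}^{\tilde b_n,\tilde\om_n}\to 0$, contradicting the lower bound. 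The main obstacle is the rigidity step: once $\om_\infty=\tilde\om_\infty\circ T_\infty$ is obtained, exploiting the transversality of $\HHz$ to the $\Mob$-orbit at $\bbs$ from Lemma \ref{lemma:Simon} in the precise form needed in each case is delicate, and it requires choosing $\si_1$ small enough to guarantee the essential uniqueness of the decomposition.
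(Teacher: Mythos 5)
Your argument is correct and follows essentially the same compactness route as the paper: both reduce via the covariance \eqref{eq:covariance-norms} to $\la=1$, $b=0$, exploit compactness of $\HH_1^{\si_1}(\bbs)$ in $H^1(S^2)$, and rule out Möbius divergence by noting that concentration would force the limit to be constant. Where the paper simply asserts that the $H^1$-closures of the sets $F_1,F_2$ are disjoint from the compact sets $\HH_1^{\si_1/3},\HH_1^{\si_1}$ (and then invokes Ehrling's lemma for \eqref{est:A3}), you unfold this into an explicit contradiction argument with a limiting Möbius map $T_\infty$; this supplies exactly the justification the paper leaves implicit. One small refinement: when $c_\infty\neq 0$ the $KAK$-scaling of $T_\infty=M_{\mu_\infty}^{c_\infty}$ is not literally $\mu_\infty$, but the observation that $DT_\infty(p^*)=\mu_\infty^{-1}\Id$ (since $p^*$ is fixed) shows directly that $T_\infty$, and hence $R_2T_\infty R_1^{-1}$, is far from a rotation when $\mu_\infty\notin(\tfrac12,2)$, which is all that is needed.
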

Here we consider the norms on $H^1(S^2,\R^n)$ defined by 
\beq
\label{def:inner-prod-sphere}
\norm{v}_{\la,b}:=\int_{S^2}\abs{\na v}^2+\thalf c_\Si\abs{\na M_{\la}^{b}}^2 \abs{v}^2 \dd v_{g{_S^2}}
\eeq
for $c_\gamma$ as in Remark \ref{rmk:inner-prod-S2} and use that these norms satisfy 
\beq
\label{eq:covariance-norms} 
\langle v\circ M_{\la}^{b},\tilde v\circ M_{\la}^{b}\rangle_{\la,b}=\langle v,\tilde v\rangle_{1,0} \qquad v,\tilde v \in H^1(S^2,\R^n).
\eeq
\begin{proof}[Proof of Lemma \ref{lemma:est-different-scales-sphere}]
Thanks to \eqref{eq:covariance-norms} it is enough to consider the case where $\la=1$ and $b=0$ so $\zbone=\om\in \HH_1^{\si_1}(\bbs)$. 
We can then use that the closure of 
$F_1:=\{\zbtwo: \tilde \om\notin \HH_1^{\frac23 \si_1}\}$ respectively 
$F_2:=\{\zbtwo: \tilde \la\notin [\half,2]\}$ in $H^1$ is disjoint from the compact sets 
$\HH_1^{\frac13 \si_1}$ respectively $\HH_1^{\si_1}$. Thus the $H^1$-distance between $F_1$ and $\HH_1^{\frac13 \si_1}$ and between $F_2$ and  $\HH_1^{\si_1}$ is positive  
and which yields the first claim of the lemma. 

As we can assume that  $\de< \eps_3$ it then suffices to prove the second claim for
maps $z_{\tilde \la}^{\tilde a,\tilde \la}$ with $\tilde \la\in [\half,2]$. As such maps satisfy uniform $C^2$ bounds 
we obtain \eqref{est:A3} from
Ehrling's lemma applied to $C^2(S^2,g_{S^2})\subset\subset L^\infty(S^2,g_{S^2})\hookrightarrow L^2(S^2,g_{S^2})$.
\end{proof}

\begin{proof}[Proof of Lemma \ref{lemma:different-scales}] 
As we may assume that $\eps_3<
\min_{\om\in \HHone}\norm{\na \omega}_{L^2(S^2)}$ as well as that $\la_2$ is sufficiently large we have that 
\eqref{est:dist-larger} is trivially true 
if either $\tilde \la\notin [C^{-1}\la,C\la]$ or $d(a, \tilde a)\geq C\la^{-1}$ for a suitably large constant $C$.

In particular we can assume that $d_\Si(a,\tilde a)\leq C\la^{-1}<\frac18\iota$, which ensures that the 
 derivatives of the adapted bubbles 
$ \zone, \ztwo$ 
 respectively of the corresponding harmonic spheres 
 $\hat z_{\la}^{0, \om}$ and $\hat z_{ \tilde \la}^{F_{a}(\tilde a), \om}$
are of order
 $O(\la^{-1})$ outside of $B_{\tilde \iota}(a)$ respectively  $\pi(\DD_{r_0/2})\subset S^2$.
As the functions representing $ \zone, \ztwo$  in the isothermal coordinates on $B_{\tilde \iota}(a)$ agree upto $H^1$-errors of order $O(\la^{-1})$ 
with the functions representing $\hat z_{\la}^{0, \om}$ and $\hat z_{ \tilde \la}^{F_{a}(\tilde a), \om}$ in stereographic coordinates we thus have 
$$\norm{\na \zone-\na \ztwo}_{L^2(\Si)}
=\norm{\na \hat z_{\la}^{0,\om}-\na \hat z_{\tilde \la}^{F_{a}(\tilde a),\tilde \om}}_{L^2(S^2)}+O(\la^{-1}).$$ 
For the torus we immediately get the same type of relationship also for the weighted $L^2$-norms, while for higher genus surfaces we need to take into account an additional error term 
that results from the difference of the weights which will be of order $O(\la^{-1}\log(\la)^{1/2})$ since $\int \rho_\la^2\abs{x}^2 \dd x=O(\la^{-2}\log(\la))$, compare Remark \ref{rmk:inner-prod-S2}.
In both cases we hence get that 
 $$\norm{ \zone-\ztwo}_{\zone}=
 \norm{\hat z_{\la}^{0,\om}-\hat z_{\tilde \la}^{F_{a}(\tilde a),\tilde \om}}_{\la,0}+O(\la^{-1}\log(\la)^\half)$$
and the first claim \eqref{est:dist-larger} of Lemma \ref{lemma:different-scales} follows from the corresponding estimates \eqref{est:lower-dist-S2} of Lemma \ref{lemma:est-different-scales-sphere}. 

Similarly, given $\eps>0$ and choosing $\de>0$ small enough so that \eqref{est:A3}
 holds, it suffices to ensure that $\la_3(\log\la_3)^{-1/2}\geq C(\min(\eps,\de))^{-1}$ for a sufficiently large $C$ to derive the required $L^\infty$-bound 
\eqref{est:L-infty-difference} on the difference of the adapted bubbles from the corresponding property \eqref{est:A3} of the bubbles stated in Lemma \ref{lemma:est-different-scales-sphere}. 
\end{proof}
To prove Lemma \ref{lemma:proj-on} as well as \eqref{est:var-weight-main} we furthermore show
%
%
\begin{lemma}\label{lemma:est-variations}
There exist $C>1$ so that for all smooth 1-parameter families
$(b_\eps)\subset \R^2$, $(a_\eps)\subset \Si$, $(\la_\eps)\subset [\la_1,\infty)$ and $(\om^{(\eps)})\subset \HH_1^{\si_1}(S^2)$ we have that $\hat z_\eps:=\hat z_{\la_\eps}^{b_\eps,\om^{(\eps)}}$ satisfies
\beq\label{est:variations-ueps-la}
C^{-1} \norm{\partial_\eps\hat z_\eps}_{\la_\eps,b_\eps} 
\leq \la_\eps^{-1} \abs{\peps \la_\eps}+
\norm{\peps \om^{(\eps)}}_{C^2}+\la_\eps \abs{\peps a_\eps}
\leq C  \norm{\partial_\eps\hat z_\eps}_{\la_\eps,b_\eps} 
\eeq
while the adapted bubbles $ z_\eps =z_{\la_\eps}^{a_\eps, \om^{(\eps)}}\in \ZZ$ satisfy
\beq \label{est:variations-zeps-la}
C^{-1} \norm{\peps z_\eps}_{z_\eps}\leq \la_\eps^{-1} \abs{\peps \la_\eps}+\norm{\peps \om^{(\eps)}}_{C^2}+\la_\eps \abs{\peps a_\eps}\leq C
\norm{\peps z_\eps}_{z_\eps}
\eeq 
and
\beq
\label{est:evol-weight} 
\norm{\peps \rho_{z_\eps}}_{L^2(\Si)}\leq C \la_\eps^{-1} \abs{\peps \la_\eps}+C\la_\eps \abs{\peps a_\eps}\leq C
\norm{\peps z_\eps}_{z_\eps} .\eeq
\end{lemma}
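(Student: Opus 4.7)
The plan is to establish the three claims in turn, using the conformal covariance \eqref{eq:covariance-norms} of the norm on $S^2$ as the central tool for \eqref{est:variations-ueps-la}, transferring to $\Si$ via the gluing construction for \eqref{est:variations-zeps-la}, and relying on a direct pointwise computation for \eqref{est:evol-weight}.

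For \eqref{est:variations-ueps-la}, I would write $\peps \hat z_\eps = (\peps \om^{(\eps)}) \circ M_{\la_\eps}^{b_\eps} + d\om^{(\eps)} \cdot \peps M_{\la_\eps}^{b_\eps}$ and use \eqref{eq:covariance-norms} to reduce the norm computation to the case $\la=1$, $b=0$. The first summand then contributes $\norm{\peps \om^{(\eps)}}_{1,0}$, which is equivalent to $\norm{\peps \om^{(\eps)}}_{C^2}$ by standard Sobolev embeddings on $S^2$. The second summand, after a direct computation of $\peps M_\la^b$ from the explicit formula $M_\la^b(x)=\pi(\la(\pi^{-1}(x)-b))$, produces a dilation vector field whose $\norm{\cdot}_{1,0}$-norm after the covariance reduction scales as $\la^{-1}\abs{\peps \la}$ and a translation vector field scaling as $\la\abs{\peps b}$ (the Jacobian $\la$ appearing because the effective translation parameter is $\la b$). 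This gives the upper bound. For the lower bound I would observe that the three parameter directions produce tangent vectors in essentially different subspaces: variations of $\om^{(\eps)}$ lie (to leading order) in the pullback of $T_\om \HHone$, while the dilation and translation variations are M\"obius-type vector fields with distinct conformal symmetry, so that their contributions decouple uniformly. Using $d\om(p^*)\neq 0$ on $\HHone$ guaranteed by \eqref{ass:alpha} together with a compactness argument on $\HHone$, no cancellation is possible, yielding the reverse inequality.

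For \eqref{est:variations-zeps-la}, the bubble region representation \eqref{eq:writing-v-lart} and the estimates \eqref{est:elart}--\eqref{est:v-away} show that $z_\eps$ agrees with $\hat z_\eps \circ (\pi_{\la_\eps}\circ F_{a_\eps})$ on $B_{\tilde \iota}(a_\eps)$ up to an error whose $\norm{\cdot}_{z_\eps}$ together with its $\peps$-variation is of order $\la^{-1}$. The $\norm{\cdot}_{z_\eps}$-norm restricted to this ball matches, via the coordinates $F_{a_\eps}$ and the rescaling $\pi_{\la_\eps}$, the $(\la_\eps,0)$-norm on $S^2$ exactly by the choice of $c_\gamma$ in Remark \ref{rmk:inner-prod-S2}. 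Outside the bubble region $z_\eps$ is close to a constant determined by $\om^{(\eps)}(p^*)$, contributing only $O(\la^{-1})$ which is absorbed into the other terms. The consistent choice of coordinates described in Remark \ref{rmk:Fa} ensures that $b_\eps := F_{a_0}(a_\eps)$ satisfies $\abs{\peps b_\eps}$ comparable to $\abs{\peps a_\eps}$, so applying \eqref{est:variations-ueps-la} to the resulting family on $S^2$ yields \eqref{est:variations-zeps-la}.

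For \eqref{est:evol-weight}, the key observation is that $\rho_{z_\eps}$ depends only on $\la_\eps$ and $a_\eps$, not on $\om^{(\eps)}$. A direct computation using $\rho_\la = \la/(1+\la^2\abs{x}^2)$ gives the pointwise estimate $\abs{\peps \rho_{z_\eps}(p)} \leq C \rho_{z_\eps}(p)\bigl(\abs{\peps \la_\eps}/\la_\eps + \la_\eps\abs{\peps a_\eps}\bigr)$ on $B_\iota(a_\eps)$ with negligible contributions from the tail, and since $\int \rho_{z_\eps}^2 \dd v_g$ is uniformly bounded on $\ZZ$, squaring and integrating yields the first inequality in \eqref{est:evol-weight}; the second is immediate from \eqref{est:variations-zeps-la}. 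The main technical obstacle will be the lower bound in \eqref{est:variations-ueps-la}, which requires a uniform no-cancellation argument across the compact family $\HHone$; here the assumption $d\bbs(p^*)\neq 0$ and its propagation via \eqref{ass:alpha} are essential to keep the M\"obius-direction variations uniformly non-degenerate.
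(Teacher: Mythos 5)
Your proposal follows essentially the same route as the paper: conformal covariance \eqref{eq:covariance-norms} to reduce \eqref{est:variations-ueps-la} to the case $\la=1, b=0$, compactness of $\HHone$ for the lower bound, transfer to $\Si$ via the gluing error estimates, and a direct pointwise computation of $\peps\rho_{z_\eps}$. Two attributions are off, though neither breaks the argument. First, the equivalence $\norm{\peps\om^{(\eps)}}_{1,0}\sim\norm{\peps\om^{(\eps)}}_{C^2}$ is \emph{not} a Sobolev embedding (an $H^1$-type norm on $S^2$ does not control $C^2$); it holds because $\peps\om^{(\eps)}$ lies in the finite-dimensional space $T_\om\HHone$, on which all norms are equivalent, uniformly over the compact $\HHone$. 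Second, the lower bound in \eqref{est:variations-ueps-la} does \emph{not} rely on $d\bbs(p^*)\neq 0$; what the paper uses is that $\HHone$ is, by construction, transversal to the M\"obius action, so that the dilation/translation Jacobi fields and the $T_\om\HHone$-directions are uniformly linearly independent over the compact family. (The hypothesis $d\bbs(p^*)\neq 0$ enters the paper elsewhere, in the energy expansion of Lemma \ref{lemma:main-term-scaling}, not here.) Also note that the composition you write, $\hat z_\eps\circ(\pi_{\la_\eps}\circ F_{a_\eps})$, double-counts the scaling; the paper instead fixes a reference chart $F_{a_0}$, sets $b_\eps=-F_{a_\eps}(a_0)$, and compares $z_\eps\circ F_{a_0}^{-1}$ with $\hat z_{\la_\eps}^{b_\eps,\om^{(\eps)}}\circ\pi$ — this fixed-chart bookkeeping is what makes the $\peps a_\eps$-variation well-defined and comparable to $\peps b_\eps$.
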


\begin{proof}[Proof of Lemma \ref{lemma:est-variations}]
For variations of $\om=\hat z_{1}^{0,\om}\in \HHone$ 
the estimate \eqref{est:variations-ueps-la} easily follows as 
$\HHone$ is a compact subset of a finite dimensional submanifold of $C^k(S^2,N)$ which is transversal to the action of the M\"obius transforms. 
We can then consider 
$ \hat z_{ \la_0^{-1}\la_\eps}^{ \la_0 (b-b_0), \om^{(\eps)}}$ and use 
 \eqref{eq:covariance-norms}
 to reduce the proof of \eqref{est:variations-ueps-la} to this special case.

To derive \eqref{est:variations-zeps-la} from \eqref{est:variations-ueps-la} it then suffices to check that we only obtain error terms of lower order when we use the approximations $z_{\la_\eps}^{a_\eps,\om^{(\eps)}} \approx 0$ on $\Si\setminus B_{\tilde \iota}(a_\eps)$
and
$$z_{\la_\eps}^{a_\eps,\om^{(\eps)}}(F_{a_0}^{-1}(x))
=\pi_N\big[
\tilde \om_{\la_\eps}^{(\eps)}(F_{a_\eps,a_0}(x))
+j_{\la_\eps}^{a_\eps, \om^{(\eps)}}(F_{a_\eps,a_0}(x))\big]
\approx \tilde \om_{\la_\eps}^{(\eps)}(x-b_\eps)
\text{ on } \DDr,
$$
where $F_{a_\eps,a_0}:= F_{a_\eps}\circ F_{a_0}^{-1} $ and  
$b_\eps:=-F_{a_\eps,a_0}(0)=-F_{a_\eps}(a_0)$. 

A  short calculation shows that at $\eps=0$  we indeed have 
\beqas 
&\norm{\peps  \hat z_{\la_\eps}^{b_\eps, \om^{(\eps)}}}_{\la_0,0, S^2\setminus \pi(\DD_{\frac{r_0}{2}})}+
\norm{\peps \big[z_{\la_\eps}^{a_\eps,\om^{(\eps)}}\circ F_{a_0}^{-1}-\tilde \om_{\la_\eps}^{(\eps)}(\cdot-b_\eps)\big]}_{\la_0, \DD_{r_0}}
+\norm{\peps z_{\la_\eps}^{a_\eps,\om^{(\eps)}}}_{z, \Si\setminus F_{a_0}^{-1}(\DD_{r_0})}
\\ &
\qquad \leq C\la^{-1} \big(  \la^{-1} \abs{\peps \la_\eps}+\la \abs{\peps a_\eps}+\norm{\peps \om_\eps}_{C^2(S^2)}  \big), \eeqas
so
 \eqref{est:variations-zeps-la} follows from \eqref{est:variations-ueps-la} after  replacing $C$ by $2C$ and after possibly increasing $\la_1$. 
 
We finally recall that the weight does not depend on the underlying map $\om^{(\eps)}\in\HHone$ and  is given by $\rho_{z_{\la}^{a,\om}}(p)=\frac{\la}{1+\la^2 \abs{F_a(p)}^2}$ on $B_{\iota}(a)$ while $\rho_{z_{\la}^{a,\om}}\equiv \frac{\la}{1+\la^2r_0^2}$ elsewhere. The final claim \eqref{est:evol-weight} of the lemma thus follows from \eqref{est:variations-zeps-la} as well as 
$$\norm{\partial_\la \rho_{z_{\la}^{a,\om}}}_{L^2(\Si)}^2= \int_{\DD_{r_0}}\babs{\partial_\la\big(\tfrac{\la}{1+\la^2\abs{x}^2}\big)}^2 \dd x+O(\la^{-4})
 \leq C\la^{-2}$$ 
and
\beqas
\norm{\peps \rho_{z_{\la}^{a_\eps,\om}}}_{L^2(\Si)}^2&= \int_{B_{\iota}(a)} 
\babs{\peps\big(\tfrac{\la}{1+\la^2\abs{F_{a_\eps}(p)}^2}\big)}^2 \dd v_g\\
&\leq C\la^6\norm{\peps F_{ a_\eps}}_{L^\infty(B_\iota)}^2 \int_{B_\iota(a)} \tfrac{\abs{F_a(p)}^2}{(1+\la^2\abs{F_a(p)}^2)^4} dv_g\leq C \la^2 \abs{\peps a_\eps}^2.
\eeqas
\end{proof}

To obtain Lemma \ref{lemma:proj-on} we first note that a short calculation shows that 
for variations with 
$\la^{-1} \abs{\peps \la}+
\norm{\peps \om^{(\eps)}}_{C^2}+\la \abs{\peps a} =O(1)$ and for 
$\mu_\eps=\la_0^{-1}\la_\eps, \quad b_{\eps}=-F_{a_\eps}(a_0)$ we have 
$$\peps z_{\la_\eps}^{a_\eps,\om^{(\eps)}}\circ F_{a_0}^{-1}\circ\pi_{\la_0}^{-1} 
=\peps \hat z_{\mu_\eps}^{\la_0 b_\eps,\om^{(\eps)}}+O(\la^{-1})
$$
on the subsets $\pi_{\la}(\DDr)$ which exhaust $S^2$ as $\la\to \infty$, 
while $\peps (z_{\la_\eps}^{a_\eps,\om^{(\eps)}}-\om^{(\eps)}(p^*))$ is of order $O(\La^{-1})$
in
 $H^1\cap L^\infty(\Si\setminus B_{\La\la^{-1}}(a_0))$. 
 
Let now $\{e_j^{\infty}\}_{j=1}^K$ be an on basis of  $X_{\bbs}$. For any fixed $j$ we consider a variation $(b_\eps, \mu_\eps, \om^{(\eps)})$ of $(0,1,\bbs)$  
so that  $e_j^\infty=\peps \hat z_{\la_\eps}^{b_\eps,\om^{(\eps)}}$ and, for $i$ sufficiently large, corresponding variations $a_i^\eps$ with $\la_i \peps b_\eps=-\peps F_{a_i^\eps}(a_0)$, $\la_i^\eps=\la_i\mu_\eps$ and $\om^{(\eps,i)}$ in $\HHone$ so that $\peps \om^{(\eps,i)}\to \peps \om$ in $C^1(S^2)$.

The above error estimates ensure that the resulting elements 
 $\tilde e_j^i=\ddeps z_{\la^i_\eps}^{a^i_\eps,\om^{(i,\eps)}}$
 of $T_{z_i}\ZZ$
 converge to $e_j^\infty$ in the sense described in the lemma. As we furthermore have that $\langle \tilde e_j^{i}, \tilde e_k^{i}\rangle_{z_i}=\de_{jk}+o(1)$ we can hence we obtain the desired orthonormal basis from Gram-Schmidt orthogonalisation.

For the sake of completeness we finally include 
\begin{proof}[Proof of \eqref{est:MV}] 
We can assume that  $2\la^{-1}\leq r_0$ as the claim is trivially true for $\la$ in a bounded range as
  $\rho_z\geq c\la^{-1}$, $c=c(\Si,g)>0$. 

As $\rho_\la^{-2} dv_{g} \geq c\la^2 dv_{g_E}$ on $\DD_{2\la^{-1}}$,  $c=\frac{1}{25}$, we can bound, writing for short 
$\tilde w=w\circ F_a$ 
$$\norm{w}_{z}^2\geq c \la^2\int_{\DD_{2\la^{-1}}\setminus \DD_{\la^{-1}}}  \abs{\tilde w}^2 \ed x\geq c \la\int_{\la^{-1}}^{2\la^{-1}} \int_{S^1}\abs{\tilde w(re^{i\theta})}^2 \ed\theta \dd r.$$
We can thus choose $r\in [\la^{-1},2\la^{-1}]$ so that 
$\abs{\fint_{S^1} \tilde w(re^{i\th})}\leq C \norm{w}_{z}$
and hence bound
\beqas
\babs{\fint_{S^1} \tilde w(r_0 e^{i\th})\, \dd\th}&
\leq C\norm{w}_z+\fint_{S^1}\int_{r}^{r_0} \abs{\partial_s \tilde w(s e^{i\th})}  \,\dd s \dd\th\\
&\leq C\norm{w}_{z}+C[\log(r_0)-\log(r)]^{\half}\norm{\na w}_{L^2(\Si)}\leq C(\log\la)^\half \norm{w}_z.
\eeqas
As a standard compactness argument gives
$\babs{\fint_{\partial B_{\iota}(a)} w \,\dd S_g-\fint_\Si w \, \dd v_g}\leq C\norm{\na w}_{L^2(\Si,g)}$ for some $C=C(\Si,g)$  we hence obtain the claim \eqref{est:MV} from 
 the above bound on $\fint_{\partial B_{\iota}(a)} w \dd S_g=\fint_{S^1} \tilde w(r_0 e^{i\th}) \dd\th$. 
\end{proof}

\vspace{-\baselineskip}
\section{Improved bounds in special case of highly concentrated maps }\label{appendix:conc}
\vspace{-\baselineskip}
In this appendix we explain how a simple analysis of $u$ on each of $\Om_1:= F_a^{-1}(\DD_{2\lambda^{-1/2}})$ and $\Om_0:= \Si\setminus F_a^{-1}(\DD_{2\lambda^{-1/2}}) $, $a,\la$ so that $\norm{u-z}_z+\norm{u-z}_{L^\infty(\Si)}\leq \eps$ for some $z=z_\la^{a,\om}$, can be used to see that
\beq \label{est:claim-decoupled}
\dist(u,\ZZ)^2+ \abs{E(u)-E(\hat \om)}\leqs \la^{-1/4}+\norm{\tau_g(u)}_{L^2(\Si,g)}^2. \eeq
We stress that in the generic case where $\la^{-1}$ scales similarly as $\norm{\tau_g(u)}_{L^2(\Si,g)}$ 
 this estimate is significantly weaker than 
the bounds \eqref{est:claim-Loj2-proven} and \eqref{est:claim-Loj3-proven-new} obtained in Section \ref{sect:proof-main} and that the resulting estimate $\abs{E(u)-E(\hat \om)}\leqs \norm{\tau_g(u)}_{L^2(\Si,g)}^{1/4}$
does not yield convergence of the harmonic map flow. Conversely, in the special case where 
 $\la^{-1}\leq \norm{\tau_g(u)}_{L^2(\Si,g)}^8$, this bound  
 yields improved \Loj estimates of the form \eqref{est:claim-Loj-improved}.

 \newcommand{\Rea}{\text{Re}}
To explain this argument, we first recall that
Lemma 2.9 of \cite{Topping-quantisation}
implies that $E(u,A)\leqs\la^{-1/2}$ for $A:=  F_a^{-1}(\DD_{4\lambda^{-1/2}}\setminus \DD_{\lambda^{-1/2}})$. 

To analyse $u$ on $\Om_1$ we consider 
$u_1(x):= u(F_a^{-1}(8\la^{-1/2}x))$ on $(\DD_{1/2},g_0:=dx^2)$ and exploit that  
$\norm{\tau_{g_0}(u_1)}_{L^2(\DD_{1/2},g_0)}\leqs \la^{-1/2} \norm{\tau_{g}(u)}_{L^2(\Si,g)} $. 
On $\hat A:=\DD_{3/8}\setminus \DD_{1/4}$ we hence have 
$\int_{\hat A } \abs{\na^2 u_1}^2+\abs{\na u_1}^4\leqs  E(u,A)+\norm{\tau_{g_0}(u_1)}_{L^2(A,g_0)}^2\leqs \la^{-1/2}$ by standard $H^2$ estimates on regions with small energy. Thus
we can cut $u_1$ off to a constant on $\hat A$ and project onto $N$ to obtain a map $u_2:\DD_{1/2}\to \N$  which agrees with $u_1$ on $\DD_{1/4}$, is constant outside of $\DD_{3/8}$ and so that $\norm{\tau_{g_0}(u_1)}_{L^2(\DD_{1/2},g_0)}\leqs \la^{-1/4}$. Viewing $u_2$ as a map from 
$T^2=[-\half,\half]^2/\sim$ which is $L^\infty\cap H^1$ close to $z_{\la^{1/2}/8}^{0,\om}\in \ZZ_{T^2}$ then allows us to apply the estimates  \eqref{est:claim-Loj2-proven} and \eqref{est:claim-Loj3-proven-new} obtained in Section \ref{sect:proof-main}  to deduce that 
$\abs{E(u_2)-E(\hat \om)}+\dist(u_2,\ZZ_{T^2})^2\leqs  
\la^{-\gamma_1/4}\log\la \leqs \la^{-1/4}.$ 
This immediately implies that $\abs{E(u,\Om_1)-E(\hat \om)}\leq \abs{E(u_2)-E(\hat \om)}+E(u_2,\hat A)\leqs  \la^{-1/4}$. As $\dist(u_2,\ZZ_{T^2})$ must be achieved for an element $z_{\la_1}^{\om_1,a_1}=\om_1(\la_1(x-a_1))+O(\la_1^{-1})$ of  $\ZZ_{T^2}$ for which  $\la_1\sim \la^\half$ and 
$\abs{a_1}\leqs \la_1^{-1}$, we furthermore deduce that there is an element $\tilde z=z_{\tilde \la}^{\tilde a,\om_1}$ with $\tilde \la\sim \la_1\la^{1/2}\sim \la$  and $\abs{a-\tilde a}\leqs \la^{-1}$  in our original set $\ZZ$ of adapted bubbles on 
$(\Si,g)$ with $\norm{u-\tilde z}_{\tilde z,\Om_1}^2\leqs \la^{-1/4}$. 
As  $\norm{\tilde z}_{\tilde z,\Om_0}+\norm{\rho_{\tilde z}}_{L^2(\tilde z,\Om_0)}\leqs \la^{-1/2}$ we can also bound 
 $\norm{u-\tilde z}_{\tilde z,\Om_0}^2\leqs \la^{-1/4}+E(u,\Om_0)$, so \eqref{est:claim-decoupled} follows once we check that
 $E(u,\Om_0)\leqs R_\la(u):= \la^{-1/4}+C\norm{\tau_g(u)}_{L^2(\Si,g)}^2$.

To see this we can exploit that $\osc_{\Om_0\cup A} u\leq \osc_{\Om_0\cup A} z+\norm{u-z}_{L^\infty(\Si)}\leq \la^{-1/2}+\eps\leq 2\eps$ is small and that $\norm{u-\bar u_A}_{L^2(A)}\leqs r_A E(u,A)^{1/2}\leqs r_A \la^{-1/4}$, $r_A:=\la^{-1/2}$, $\bar u_A:=\fint_A u$. Cutting $u$ off to $\bar u_A$ using a function $\varphi\in C_c^\infty(\Om_0\cup A)$ with  $\varphi\vert_{\Om_0}\equiv 1$ and $\abs{\na\varphi}^2+\abs{\na^2 \varphi}\leqs r_A^{-2}$  hence yields  
$u_0:=u+\varphi (u- u_A)$ with $u_0\vert_{\Om_0}\equiv u$, $\norm{u_0-\bar u_0}_{L^\infty(\Si)}
\leq 2\eps$, $\bar u_0:=\fint_\Si u_0$, and 
\beqs \norm{\Delta_g u_0}_{L^1(A)}
\leqs \norm{\tau_g(u)}_{L^2(A)} r_A+E(u,A)+\norm{\na u}_{L^2(A)}+r_A^{-1}\norm{u-\bar u_A }_{L^2(A)}\leqs R_\la(u).\eeqs 

For sufficiently $\eps>0$ we hence obtain the required bound $E(u,\Om_0)\leqs R_\la(u)$ from 
\beqas
\int_\Si \abs{\na u_0}^2 &\leq \int_{\Si}\abs{\Delta_g u_0}\abs{u_0-\bar u_0}  \leq C \norm{\tau_g(u)}_{L^2(\Om_0)} \norm{u_0-\bar u_0 }_{L^2(\Si)}+ C \eps ( \int_\Si \abs{\na u_0}^2+R_\la(u)) 
\\
&\leq (\tfrac18+C\eps) \int_\Si \abs{\na u_0}^2 dv_g+CR_\la(u).
\eeqas 

\vspace{-1.5\baselineskip}

M. Rupflin: Mathematical Institute, University of Oxford, Oxford OX2 6GG, UK\\
\textit{melanie.rupflin@maths.ox.ac.uk}

\end{document}